\documentclass[reqno,11pt]{amsart}

\usepackage[top=2.0cm,bottom=2.0cm,left=3cm,right=3cm]{geometry}
\usepackage{amsthm,amsmath,amssymb,dsfont}
\usepackage{mathrsfs,amsfonts,functan,extarrows,mathtools}
\usepackage[colorlinks]{hyperref}

\usepackage{marginnote}
\usepackage{xcolor}
\usepackage{stmaryrd}
\usepackage{esint}
\usepackage{graphicx}
\usepackage{bbm}

\usepackage{parcolumns}
\usepackage{caption}
\usepackage{subcaption}
\usepackage{enumitem}
\usepackage{makecell}
\usepackage{tikz}
\usetikzlibrary{calc,decorations.pathreplacing}

\newtheorem{theorem}{Theorem}[section]
\newtheorem{proposition}{Proposition}[section]
\newtheorem{definition}{Definition}[section]
\newtheorem{corollary}{Corollary}[section]
\newtheorem{lemma}{Lemma}[section]

\newtheorem{assumption}{Assumption}[section]

\newtheoremstyle{remarkstyle}{}{}{\normalfont}{0pt}{\itshape}{.}{.5em}{\itshape\thmname{#1}\thmnumber{ #2}}   
\theoremstyle{remarkstyle}
\newtheorem{remark}{Remark}[section]

\numberwithin{equation}{section}
\allowdisplaybreaks

\def\d{\mathrm{d}}

\def\div{\mathrm{div}}

\def\exp{\mathrm{exp}}

\newcounter{wronumber}

\begin{document}
	\title[]
    {Quantitative propagation of chaos for particle systems with bounded kernels and multiplicative noise}

	\author[Ning Jiang]{Ning Jiang}
	\address[Ning Jiang]{\newline School of Mathematics and Statistics, Wuhan University, Wuhan, 430072, P. R. China}
	\email{njiang@whu.edu.cn}

	\author[Rongli Mo]{Rongli Mo}
	\address[Rongli Mo]
	{\newline School of Mathematics and Statistics, Wuhan University, Wuhan, 430072, P. R. China}
	\email{ronglimo@whu.edu.cn}
	\thanks{${}^*$ Corresponding author \quad \today}
		
\begin{abstract}
	We prove the quantitative propagation of chaos for stochastic particle systems with interaction in both the drift and the diffusion coefficients, provided the drift kernel is bounded and free of Lipschitz or smoothness assumptions. Our proof is based on the relative entropy framework of Jabin and Wang \cite{JW2018}, and applies and extends their work on the exponential laws of large numbers. We extend one of their exponential laws of large numbers from the drift to the diffusion kernel to handle the error term arising from multiplicative noise in the entropy evolution equation. Proving this extension relies on a dynamic combinatorial analysis.\\
	\noindent\textsc{Keywords.}  Interacting particle systems; Multiplicative noise; McKean-Vlasov equation; Propagation of chaos; Relative entropy\\

	\noindent\textsc{MSC2020.}  60K35, 82C22, 60H10
\end{abstract}

\maketitle	
	
	
\phantomsection
	

	
\section{Introduction}
\subsection{Motivation} We consider the large systems of $N$ indistingguishable interacting particle  on the torus $\mathbb{T}^d$ governed by the coupled stochastic differential equations (SDEs)
\begin{equation}\label{Particle-sys}
	\d X^i_t = \dfrac{1}{N} \sum \limits_{ k= 1}^N K ( X^i_t - X^k_t ) \d t + \dfrac{ \sqrt{2} }{N} \sum \limits_{ k= 1 }^N \sigma( X^i_t - X^k_t ) \d B^i_t \,, \ i = 1\,, \cdots\,, N \,,
\end{equation}
where $X^i_t\in \mathbb{T}^d$ denotes the position of the $i\mbox{-}$th particle at time $t\geq 0$, $\{B^i_t\}$ are independent $d\mbox{-}$dimensional Brownian motions, the vector field $K: \mathbb{T}^d\to\mathbb{R}^d$ is the interaction drift kernel, and the diagonal matrix field $\sigma: \mathbb{T}^d\to\mathbb{R}^{d \times d}$ is the interaction diffusion kernel. The factor $1/N$ in \eqref{Particle-sys} corresponds to the so-called mean-field scaling, i.e. the total effect exerted on any particle by the others remains of order unity \cite{Spo1991}. The stochastic terms in \eqref{Particle-sys} should be understood in the It\^o sense. In this paper, we take $\mathbb{T}^d=[0,1]^d$ for simplicity.

The multiplicative noise setup in System \eqref{Particle-sys} can be practically achieved by adding noise to the interaction strength function when modeling emergent behaviors of many-body systems via particle systems. Examples include perturbing the coupling strength in the Kuramoto synchronization model \cite{HKMZ2020} or the communication weight function in the Cucker–Smale flocking model \cite{AH2010, HJNXZ2017}. Besides modeling emergent behaviors, interacting particle systems also find applications in control engineering \cite{LPLSFD2007}, mean-field games \cite{CDLL2019} and machine learning \cite{BFFT2012}.

As $N \to \infty$, the {\em mean field} limit of the system \eqref{Particle-sys} is formally described by the McKean-Vlasov equation
\begin{equation}\label{Limit-Eq}
	\partial_t \bar{\rho}+\nabla \cdot ( (K*\bar{\rho}) \bar{\rho}  )
	=\sum^d_{\alpha=1} \partial^2_{x^\alpha} \Big( \bar{\rho} \big( \sigma_{\alpha\alpha}*\bar{\rho} (x) \big)^2  \Big)\,,
\end{equation}
where $\bar{\rho}=\bar{\rho}(t,x)$ is the one-particle density function; see for instance \cite{Szn1991} for the formal derivation. Usually, the rigorous derivation of the limiting equation \eqref{Limit-Eq} from the particle system \eqref{Particle-sys} can be approached by tackling one of two equivalent problems:
\begin{itemize}
	\item Convergence of the empirical measure: to show that the empirical measure of \eqref{Particle-sys}  $$\mu_N(t):=\frac{1}{N}\sum^N_{i=1}\delta_{X^i_t}$$ converges weakly to $\bar{\rho}$ as $N\to\infty$, provided it holds at $t=0$. 
	\item Propagation of chaos: to show that for all fixed $k$ and all $t>0$, the $k\mbox{-}$particle marginal of the $N$-particle system \eqref{Particle-sys}
	$$ \rho_N^k(t,x_1,\cdots,x_k):=\int_{\mathbb{T}^{d(N-k)}} \rho_N(t,x_1,\cdots,x_N)\d x_{k+1}\cdots \d x_N$$
	converges weakly to $\bar{\rho}^{\otimes k}$ as $N\to\infty$, provided it holds at $t=0$.
\end{itemize}
Here $\rho_N$ is the joint distribution density of the $N\mbox{-}$particle system \eqref{Particle-sys}, formally governed by the Liouville euqaiton (or forward Kolmogorov equation) 
\begin{equation}\label{Liouville-Eq}
	\begin{aligned}
		\partial_t \rho_N &+ \sum^N_{i=1} \nabla_{x_i} \cdot \Big( \rho_N\Big( \dfrac{1}{N} \sum \limits_{k = 1}^N K(x_i - x_k) \Big) \Big)
		=\sum^N_{i=1} \sum^d_{\alpha=1} \partial^2_{x^\alpha_i} \Big( \rho_N  \dfrac{1}{N^2} \Big( \sum\limits_{k = 1}^N \sigma_{\alpha\alpha}(x_i - x_k) \Big)^2 \Big)\,;
	\end{aligned}
\end{equation}
see for instance \cite{Pav2014} for the formal derivation. This paper adopts the second perspective and aims to establish quantitative propagation of chaos for the system \eqref{Particle-sys}.

 Most literature on propagation of chaos addresses the case of the constant diagonal diffusion kernel $\sigma=cI_d$ with $c>0$, including studies where the interaction drift kernel $K$ has some good regularity \cite{Szn1991, BCC2011, Mal2001, Mal2003, DEGZ2020, GPG2025} and those where $K$ has weak regularity or is singular \cite{JW2016, JW2018, BJW2023, CCS2019, RS2023, GLM2025, RS2024}. In particular, Jabin and Wang \cite{JW2018}  developed a new approach based on the relative entropy for systems with the weakly regular drift kernel $K$, thereby providing an effective analytical framework for proving the quantitative propagation of chaos in this setting.
 
In this paper, we assume the drift kernel $K$ to be  weakly regular (Assumption \ref{Ass-A1}) but allow the diffusion kernel $\sigma$ to depend on the particle interaction. This generalization leads to a nonlocal variable-coefficient diffusion term in the Liouville equation \eqref{Liouville-Eq} and a nonlinear distribution‑dependent diffusion term in the limiting equation \eqref{Limit-Eq}. Consequently, the framework of \cite{JW2018} is not directly applicable to our situation, as its key estimates are valid only for a constant diffusion kernel.
 
 On the other hand, to our best knowledge, existing propagation of chaos results for systems with non-constant diffusion coefficients do not cover particle systems of the form \eqref{Particle-sys} governed by Assumptions \ref{Ass-A1} and \ref{Ass-A2}. For mean-field particle systems that can contain System \eqref{Particle-sys}, the early propagation of chaos results by M\'el\'eard \cite{Mel1996},  G\"artner \cite{Gar1988}, and Chiang \cite{Chi1994} all require both the drift and diffusion coefficients to have some continuity. Recently,
 \begin{itemize}
	\item Huang \cite{Hua2026} uses the gradient estimate of decoupled SDEs to prove  the quantitative propagation of chaos in $L^\eta(\eta \in (0, 1])$-Wasserstein distance for mean-field particle systems, under suitable smoothness and Lipschitz assumptions on the coefficients.

    \item Huang \cite{Hua2023} presents several quantitative propagation of chaos results. For mean-field particle systems with the diffusion coefficient allowed to depend on interactions, propagation of chaos estimates with exponential decay rates are proved. These estimates hold under Lipschitz interaction kernels and a nontrivial dissipative drift component independent of the interaction.
	
		\item  Ning and Wu \cite{NW2026} prove the propagation of chaos for a class of $N$-particle stochastic variational inequality systems with superlinear, locally Lipschitz drift coefficients and locally H\"older diffusion coefficients. Such a system is described by SDEs with a subdifferential term and reduces to the classical mean-field particle system when the subdifferential terms vanish.
	
	\item Chassagneux, Szpruch, and Tse \cite{CST2022} prove quantitative propagation of chaos for mean-field particle systems under sufficiently high-order smoothness assumptions on the coefficients and obtain convergence rates of order $1/{N^k}$ for arbitrary $k$ (depending on the smoothness of the coefficients). 
	
	\item Crucianelli and Tangpi \cite[Theorem 3.16]{CT2024} prove the propagation of chaos for an interacting particle system on a random graph with interaction-dependent diffusion allowed. The result holds under Lipschitz and linear growth conditions on the coefficients and the limiting system consists of McKean-Vlasov type graphon SDEs.

\end{itemize}
For recent propagation of chaos results for other types of particle systems with non-constant diffusion coefficients, examples include the following.
\begin{table}[h]
	\centering
    \footnotesize
	\begin{tabular}{c|c|c|c|c}
		\hline
		Example & \makecell[c]{Carrillo, Guo, \\and Jabin \cite{CGJ2024}}& Grass, Guillin, and Poquet \cite{GGP2025} & Feng and Wang \cite{FW2025} & Lacker \cite{Lac2018}\\
		\hline
		\makecell[c]{Diffusion \\ coefficient }& $( \frac{1}{N} \sum\limits^N_{j=1} \sigma(X^i_t- X^j_t) )^{1/2}$ & $( \sigma_1(X^i_t) + \frac{1}{N} \sum\limits^N_{j=1}\sigma_2(X^i_t - X^j_t) )^{1/2}$ & $\frac{\sqrt{2}}{\sqrt{N}} \sum\limits^N_{j=1} \sigma(X^i_t - X^j_t)$ & $\sigma(t, X_t^i)$ \\
		\hline
	\end{tabular}
\end{table}

\noindent Here $\sigma$, $\sigma_1$ and $\sigma_2$ are matrix-valued functions. In addition, the Keller–Segel models in \cite{BCL2026, WLH2026} and the collision-oriented particle system in \cite{DL2024} also have non-constant diffusion coefficients.

In the study of stochastic particle systems with non-constant diffusion coefficients, one may also consider the case of common noise, where all particles are driven by the same Brownian motion or family of Brownian motions. This case corresponds to the so-called  conditional propagation of chaos problem; see \cite{CS2019, CF2016, Nik2025, Ros2020, NRS2022, Hua2025, CO2025} and the references therein. 

 In the current work, we adopt the relative entropy method as our analytical framework and develop a new exponential law of large numbers (large deviation estimate) to address the structural challenges in Eqs. \eqref{Liouville-Eq} and \eqref{Limit-Eq} posed by the interaction-dependent diffusion coefficient in the system \eqref{Particle-sys}. This estimate, presented in Theorem \ref{Thm-LDE} and inspired by \cite[Theorem 4]{JW2018}, constitutes the main technical contribution of this paper. With this estimate, it thus advances the relative entropy method to the setting of interaction-dependent diffusion, ultimately establishes a quantitative propagation of chaos for System \eqref{Particle-sys}.

\subsection{Main result} We begin by stating the precise assumptions on the drift kernel $K$ and the diffusion kernel $\sigma$. 

\begin{assumption}\label{Ass-A1}
	The interaction drift kernel $K: \mathbb{T}^d \to \mathbb{R}^d$ satisfies
	$$ 
	K \in L^\infty (\mathbb{T}^d) \quad \text{and} \quad \div K \in \dot{W}^{-1,\infty} ( \mathbb{T}^d ). $$
\end{assumption}
\begin{assumption} \label{Ass-A2}
	The interaction diffusion kernel $\sigma: \mathbb{T}^d \to \mathbb{R}^{d \times d} $ is diagonal with
	$$\sigma(x)=\textrm{diag}( \sigma_{11}(x), \cdots, \sigma_{dd}(x) )\,,$$ satisfies $\sigma(x) \in W^{2,\infty} ( \mathbb{T}^d )$, and there exists a constant $\underline{\sigma} > 0 $ such that
	$$\sigma_{\alpha\alpha}(x) > \underline{\sigma} \quad \text{for all } x \in \mathbb{T}^d \text{ and all } \alpha = 1, \dots, d. $$
\end{assumption}
In the above assumption on $K$, the space $\dot{W}^{-1,\infty}( \mathbb{T}^d )$ is the set of all functions $f$ with $\int_{ \mathbb{T}^d } f=0 $ for which there exists a vector field $g\in L^\infty(\mathbb{T}^d)$ such that $f =\div g$. Its norm is given by
 \begin{equation*}
 	\| f\|_{ \dot{W}^{-1,\infty} } :=\inf \{ \|g\|_{L^\infty} : g\in L^\infty(\mathbb{T}^d;\mathbb{R}^d)\,, f=\div g \}\,.
 \end{equation*}

\begin{remark}
Assumption \ref{Ass-A1} is inspired by the proof of Theorem 1 in \cite{JW2018}, which establishes the propagation of chaos for System \eqref{Particle-sys} with $\sigma=\sqrt{2}I_d$ under the hypothesis $K \in\dot{W}^{-1,\infty} (\mathbb{T}^d)$ with $\div K \in \dot{W}^{-1,\infty} ( \mathbb{T}^d)$. In that proof, $K$  is decomposed as $$K=\bar{K} +\tilde{K}\,,$$
where $\bar{K}\in \dot{W}^{-1,\infty} (\mathbb{T}^d)$ with $\div \bar{K}=0$ and $\tilde{K}\in L^\infty (\mathbb{T}^d)$ with $\div \tilde{K} \in \dot{W}^{-1,\infty} (\mathbb{T}^d)$. Assumption \ref{Ass-A1} for $K$ is simply that of $\tilde{K}$. One may try to relax the Assumption \ref{Ass-A1} to $K \in\dot{W}^{-1,\infty}$ with $\div K \in \dot{W}^{-1,\infty}$. This potential extension is not pursued here as our analysis focuses on the case of propagation of chaos for System \eqref{Particle-sys} where the diffusion coefficient depends on the interaction.
\end{remark}

\begin{remark}
	Assumption \ref{Ass-A1} on $K$ naturally excludes singular kernels such as the Coulomb kernel, but it covers both smooth and highly oscillatory kernels.
\end{remark}

\begin{remark}
The condition that $\sigma(x) \in W^{2,\infty}$ is a rather strong regularity assumption, introduced primarily to overcome all computational difficulties arising from the diffusion terms in equations \eqref{Limit-Eq} and \eqref{Liouville-Eq}. Such strong regularity requirements are also found in studies of particle systems with non‑constant diffusion coefficients, for example, the diffusion coefficient is required to be at least $C^3$ in \cite{JM1998} for moderately interacting systems and assumed to be $C^1$ in \cite{Nik2025} for systems with common noise.
\end{remark}

The relative entropy method requires analyzing the evolution of the rescaled relative entropy between $\rho_N$ and $ \bar{ \rho }^{ \otimes N }$ (denoted simply by $\bar{ \rho }_N$), defined by
$$ H_N(t) = H_N ( \rho_N | \bar{ \rho }_N )(t) = \frac{1}{N} \int_{\mathbb{T}^{ dN } }  \rho_N \log \frac{ \rho_N }{ \bar{ \rho }_N } \d X \,,$$
where $X := (x_1,\cdots,x_N)$ is the notation used throughout the article for convenience. Following \cite{JW2016, JW2018}, we consider $\rho_N$ to be an entropy solution of Equation \eqref{Liouville-Eq}, as per 
\begin{definition}\label{Def-1-1}(Entropy solution). Let $T>0$, we say $L^\infty([0,T], L^1\cap L^2( \mathbb{T}^{dN} ))$ with $ \rho_N\geq 0$ and $ \int_{ \mathbb{T}^{dN} } \rho_N \d X = 1 $, is an entropy solution to Eq. \eqref{Liouville-Eq} on the time interval $[0,T]$ if
	\begin{itemize}[leftmargin=15pt]
		\item $\rho_N$ solves \eqref{Liouville-Eq} in the sense of distributions, i.e. for any $\varphi \in C^\infty_c([0,T] \times \mathbb{T}^{dN} )$, 
		\begin{equation*}
			\begin{aligned}
				&\int_{ \mathbb{T}^{ dN } }  \rho_N(T) \varphi(T) \d X - 	\int_{ \mathbb{T}^{ dN } }  \rho_N(0) \varphi(0) \d X - \int^t_0 \int_{ \mathbb{T}^{ dN } } \rho_N \partial_t \varphi \d X \d s\\
				& = \dfrac{1}{N} \sum\limits^N_{i=1} \sum \limits_{ k=1 }^N \int^t_0 \int_{ \mathbb{T}^{ dN } }  \rho_N K ( x_i-x_k ) \cdot \nabla_{x_i} \varphi\d X \d s \\
				&+ \dfrac{1}{N^2} \sum^N_{i=1} \sum^d_{\alpha=1} \int^t_0 \int_{ \mathbb{T}^{ dN } }  \rho_N \Big( \sum\limits_{ k=1 }^N \sigma_{\alpha\alpha}( x_i-x_k ) \Big)^2 \partial^2_{ x^\alpha_i } \varphi \d X \d s\,.
			\end{aligned}
		\end{equation*}
		
		\item For a.e. $t\in [0,T]$,
		 \begin{equation}\label{Entropy- dissipation-inequality}
			\begin{aligned}
				\int_{ \mathbb{T}^{ dN } } 
				&\rho_N \log \rho_N \d X 
				\leq \int_{ \mathbb{T}^{ dN } }  \rho^0_N \log \rho^0_N  \d X-\dfrac{1}{N} \sum\limits^N_{i=1} \sum^N_{k=1} \int^t_0 \int_{ \mathbb{T}^{ dN } }  \nabla_{x_i} \cdot K (x_i-x_k)   \rho_N \d X \d s\\
				&+ \frac{1}{ N^2 } \sum^N_{ i=1 }\sum^d_{\alpha=1}\int^t_0\int_{\mathbb{T}^{dN}} \partial^2_{x^\alpha_i}  \Big( \sum_{k = 1}^N \sigma_{\alpha\alpha}(x_i - x_k) \Big)^2   \rho_N  \d X \d s\\
				&- \frac{1}{ N^2 } \sum^N_{ i=1 }\sum^d_{ \alpha = 1 } \int^t_0 \int_{ \mathbb{T}^{ dN } }   \Big( \sum_{k = 1}^N \sigma_{ \alpha\alpha }( x_i - x_k ) \Big)^2  \frac{ ( \partial_{ x^\alpha_i } \rho_N )^2 }{ \rho_N } \d X \d s\,.
			\end{aligned}
		\end{equation}
	\end{itemize}
\end{definition}

\noindent For the limiting equation \eqref{Limit-Eq}, a higher regularity solution is required and we directly consider a  classical solution here.

Now, we state our main convergence result of this paper. Under Assumptions \ref{Ass-A1} and \ref{Ass-A2}, we  prove a key Gronwall inequality for the time derivative of $H_N(t)$, from which we obtain a quantitative rate of convergence of $\rho_N$ to $\bar{\rho}_N$ in the sense of rescaled relative entropy. 
 \begin{theorem}\label{Thm-Rel-Ent-Est} Let $T>0$.
 	If the drift kernel $K$ satisfies Assumption \ref{Ass-A1} and the diffusion kernel $\sigma$  satisfies Assumption \ref{Ass-A2}, $\bar{\rho} \in C^1 ([0,T]; C^2 ( \mathbb{T}^d ) ) $ is a solution the limiting equation \eqref{Limit-Eq}  with $ \inf \bar{ \rho } > 0 $ and $ \int_{ \mathbb{T}^d } \bar{ \rho } \d x = 1 $, and $\rho_N$ is an entropy solution to the Liouville equation \eqref{Liouville-Eq} in the sense of Definition \ref{Def-1-1}, then for any $t\leq T$, the rescaled relative entropy  $H_N$ satisfies the inequality
 	\begin{equation*}
 			H_N ( \rho_N | \bar{ \rho }^{ \otimes N } ) (t) \leq e^{CMt} \Big( H_N ( \rho_N | \bar{ \rho }^{ \otimes N } ) (0) + \frac{1}{N} \Big) \,,
 	\end{equation*}
 	where $C$ is a universal constant and
 		\begin{equation*}
 			\begin{aligned}
 				M & = ( \| K \|_{ L^\infty } + \| \div K \|_{ \dot{W}^{-1,\infty} } ) \frac{ \| \nabla \bar{\rho} \|_{L^\infty} }{ \inf \bar{\rho} } + \frac{d}{ \underline{ \sigma }^2 } \| \div K \|^2_{ \dot{W}^{-1,\infty} } \\
 				& + 12 e^2 \Big(8d \|\sigma\|^2_{ W^{2,\infty} } + 8d \|\sigma\|^2_{ W^{2,\infty} } \frac{ \| \nabla \bar{\rho} \|_{L^\infty} }{ \inf \bar{\rho} } + 2d \|\sigma\|^2_{ W^{2,\infty} } \frac{ \| \nabla^2 \bar{\rho} \|_{L^\infty} }{ \inf \bar{\rho} } \Big)\,.
 			\end{aligned}
 		\end{equation*}
 \end{theorem}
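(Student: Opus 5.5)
We follow the relative entropy strategy of \cite{JW2018}. The plan is to derive a differential inequality for $H_N$ from the entropy dissipation inequality \eqref{Entropy- dissipation-inequality} combined with the equation satisfied by $\bar\rho_N=\bar\rho^{\otimes N}$. The error terms are then controlled by the Fisher-information dissipation and by exponential laws of large numbers.

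\textbf{Step 1: entropy inequality.} Since $\bar\rho$ is classical and bounded below, $\log\bar\rho_N$ is an admissible test function in the weak formulation. Writing $\frac{1}{N}\int\rho_N\log\bar\rho_N$ through the weak formulation and subtracting from \eqref{Entropy- dissipation-inequality}, we obtain an inequality for $H_N(t)-H_N(0)$. To simplify it, we use that $\bar\rho_N$ solves
\[
\partial_t\bar\rho_N+\sum_i\nabla_{x_i}\cdot\big(\bar\rho_N\,K*\bar\rho(x_i)\big)=\sum_{i,\alpha}\partial^2_{x_i^\alpha}\big(\bar\rho_N(\sigma_{\alpha\alpha}*\bar\rho(x_i))^2\big).
\]
We write $a_i^\alpha=\frac1N\sum_k\sigma_{\alpha\alpha}(x_i-x_k)$ and $\bar a^\alpha=\sigma_{\alpha\alpha}*\bar\rho$. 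After integrations by parts and completing squares, the diffusion part splits into a dissipation term $-\frac1N\sum_{i,\alpha}\int (a_i^\alpha)^2\rho_N|\partial_{x_i^\alpha}\log(\rho_N/\bar\rho_N)|^2$ plus error terms in which $(a_i^\alpha)^2-(\bar a^\alpha(x_i))^2$ (and its first and second derivatives) are multiplied by $\partial\log\bar\rho$, $\partial^2\bar\rho/\bar\rho$, or $\partial_{x_i^\alpha}\log(\rho_N/\bar\rho_N)$. By Assumption \ref{Ass-A2}, $a_i^\alpha\ge\underline\sigma$, so the dissipation is at least $\underline\sigma^2$ times the relative Fisher information.

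\textbf{Step 2: drift terms, as in \cite{JW2018}.} The drift produces
\[
-\frac1N\sum_i\int\rho_N\big(K\star\mu_N-K*\bar\rho\big)(x_i)\cdot\nabla\log\bar\rho(x_i)
\]
together with a divergence term. We write $\div K=\div g$ with $\|g\|_{L^\infty}\le\|\div K\|_{\dot W^{-1,\infty}}$. Integrating by parts moves one derivative onto $\log(\rho_N/\bar\rho_N)$, and Young's inequality absorbs the resulting term into the dissipation. This produces the contribution $\frac{d}{\underline\sigma^2}\|\div K\|^2$ to $M$, plus a term of the form $\int\rho_N\,|g\star\mu_N-g*\bar\rho|^2$. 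Each remaining term has the form $\frac1N\int\rho_N\Phi$, with $\Phi$ a centered mean-field fluctuation. We bound it by the Donsker–Varadhan/Gibbs inequality:
\[
\frac1N\int\rho_N\Phi\le \frac{1}{\eta}\Big(H_N+\frac1N\log\int\bar\rho_N e^{\eta\Phi}\Big).
\]
The exponential moments are then bounded by the Jabin–Wang laws of large numbers, namely their Theorem 3 for the linear terms and Theorem 4 for the quadratic terms. This gives the $(\|K\|_{L^\infty}+\|\div K\|)\|\nabla\bar\rho\|_\infty/\inf\bar\rho$ part of $M$, with an $O(1/N)$ remainder.

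\textbf{Step 3: multiplicative-noise errors (the main obstacle).} The cross term involving $\partial_{x_i^\alpha}\log(\rho_N/\bar\rho_N)$ is likewise handled by Young's inequality against the dissipation. This leaves quadratic quantities such as
\[
\frac1N\sum_i\big|(a_i^\alpha)^2-(\bar a^\alpha(x_i))^2\big|^2\,\frac{|\partial\bar\rho|^2}{\bar\rho^2}.
\]
Factoring $(a_i^\alpha)^2-(\bar a^\alpha)^2=(a_i^\alpha-\bar a^\alpha)(a_i^\alpha+\bar a^\alpha)$ and using $|a_i^\alpha+\bar a^\alpha|\le 2\|\sigma\|_{L^\infty}$, each such term becomes a weighted square of the centered average $\frac1N\sum_k\big(\sigma(x_i-x_k)-\sigma*\bar\rho(x_i)\big)$, or of its first or second derivatives. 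Those derivatives appear after integrating by parts the $\partial^2$ terms, and $W^{2,\infty}$ regularity is what makes them available. The needed input is an analogue of \cite[Theorem 4]{JW2018} with the drift kernel replaced by the diffusion kernel and its derivatives, which we take to be Theorem \ref{Thm-LDE}:
\[
\int\bar\rho_N\exp\Big(\frac{\gamma}{N}\sum_i\Big|\frac1{\sqrt N}\sum_k\psi(x_i,x_k)\Big|^2\Big)\le C,
\]
for centered $\psi$ with $\gamma\|\psi\|_\infty^2$ small. The combinatorial expansion of this exponential moment, in which products of centered factors vanish unless indices pair up, is the delicate part. We invoke it with the weights $8d\|\sigma\|^2_{W^{2,\infty}}$ and so on, choosing $\eta$ so that the factor $12e^2$ emerges.

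\textbf{Step 4: Gronwall.} Collecting terms gives
\[
H_N(t)\le H_N(0)+CM\int_0^t\Big(H_N(s)+\frac1N\Big)\,\d s,
\]
and Gronwall's lemma yields the stated bound.
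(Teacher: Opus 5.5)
Your setup (Step 1), the drift terms (Step 2) and the Gronwall step match the paper. Step 2 is the content of \cite[Lemma 4]{JW2018}, which the paper simply cites. Note that you have swapped the roles of Theorems 3 and 4 of \cite{JW2018}: Theorem 3 controls the squared centered average, and Theorem 4 the doubly cancelling two-body sum.

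The real departure is Step 3. The paper integrates the diffusion terms by parts completely, so the $\sigma$-error becomes $\frac{1}{N^3}\sum_{i,j,k}\phi_2(x_i,x_j,x_k)$ with
\[
\phi_2=\sum_\alpha\bar\rho(x)^{-1}\partial^2_{x^\alpha}\big(\bar\rho(x)F^\alpha(x,z,z')\big),\qquad F^\alpha=\sigma_{\alpha\alpha}(x-z)\sigma_{\alpha\alpha}(x-z')-(\sigma_{\alpha\alpha}*\bar\rho(x))^2,
\]
see \eqref{phi1-phi2}. This $\phi_2$ is bounded, contains no derivative of $\rho_N$, is \emph{linear} in the fluctuation, and cancels both in $(z,z')$ and in $x$ (since $\int\partial^2_{x^\alpha}(\bar\rho F^\alpha)\,\d x=0$). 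The paper spends no Fisher information on it. Lemma \ref{Lem-FN-bar-FN} together with the new three-index law of large numbers, Theorem \ref{Thm-LDE} (the analogue of \cite[Theorem 4]{JW2018}), finishes the job. The numbers $12e^2$, $8d$, $8d$, $2d$ are just the threshold $1/(6e^2)$ and the crude bound on $|\phi_2|$. The inequality you display is not Theorem \ref{Thm-LDE}; it is \cite[Theorem 3]{JW2018} combined with Jensen's inequality and exchangeability.

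Your alternative (absorb cross terms by Young, then apply a quadratic law of large numbers) is viable, but as written it has two defects.

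\emph{Linear terms.} Your route closes only if the $\sigma$-error consists solely of the cross terms
\[
-\frac1N\sum_{i,\alpha}\int\rho_N\Big(\partial_{x_i^\alpha}F_i^\alpha+F_i^\alpha\,\partial_{x^\alpha}\log\bar\rho(x_i)\Big)\,\partial_{x_i^\alpha}\log\frac{\rho_N}{\bar\rho_N},\qquad F_i^\alpha=(a_i^\alpha)^2-(\bar a^\alpha(x_i))^2 .
\]
This is exactly what a single integration by parts produces (equivalently, the paper's $\phi_2$-term integrated back once), and then only first derivatives of $\sigma$ occur. Your Step 1, however, also lists error terms in which $F$ or its derivatives are multiplied only by $\partial\log\bar\rho$ or $\partial^2\bar\rho/\bar\rho$, with no factor $\partial\log(\rho_N/\bar\rho_N)$. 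Those are linear in the fluctuation, so Young's inequality plus a squared-average bound does not apply to them. Individually they lack the $x$-cancellation, so Lemma \ref{Lem-FN-bar-FN} only gives $\eta^{-1}H_N+O(\eta)$ with an $N$-independent $O(\eta)$, which destroys the $1/N$ rate. Even their correct combination needs a linear doubly cancelling estimate such as Theorem \ref{Thm-LDE}, which your argument does not provide.

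\emph{The constant.} Young costs a factor $(a_i^\alpha)^{-2}\le\underline\sigma^{-2}$ and squares the weights. Your $\sigma$-part of $M$ therefore comes out roughly as
\[
Cd\,\|\sigma\|^2_{W^{1,\infty}}\big(1+\underline\sigma^{-2}\|\sigma\|^2_{W^{1,\infty}}\big)\big(1+\|\nabla\log\bar\rho\|^2_{L^\infty}\big).
\]
This is in general not bounded by a universal multiple of the stated $M$: for $\div K=0$ the stated $M$ does not involve $\underline\sigma$ at all. Your remark that $12e^2$ ``emerges'' therefore has no basis; your route proves the Gronwall bound with a different $M$, not the one in the statement.

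What your route buys is that no new combinatorics are needed and only $\sigma\in W^{1,\infty}$ enters the error terms. The paper's route buys a $\sigma$-contribution that is free of $\underline\sigma$ and linear in $\|\sigma\|^2_{W^{2,\infty}}$ and $\|\nabla^2\bar\rho\|_{L^\infty}/\inf\bar\rho$, which is what the statement records. The price is the $W^{2,\infty}$ assumption and Theorem \ref{Thm-LDE}.
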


 Theorem \ref{Thm-Rel-Ent-Est} implies a strong form of propagation of chaos for \eqref{Particle-sys}. Indeed, by the classical Csisz\'{a}r-Kullback-Pinsker inequality \cite[Chapter 22]{Vil2009} 
$$ \| \rho^k_N - \bar{\rho}^{ \otimes k } \|_{L^1} \leq \sqrt{ 2k H_k( \rho^k_N | \bar{\rho}^{ \otimes k } ) }$$ and the subadditivity of entropy 
$$  H_k ( \rho^k_N |  \bar{ \rho }^{ \otimes k } ) = \frac{1}{k} \int_{ \mathbb{T}^{ dk } } \rho^k_N \log ( \frac{ \rho_N^k }{ \bar{ \rho }^{ \otimes k } } )\d x_1 \cdots \d x_k \leq H_N ( \rho_N | \bar{ \rho }_N ) $$ 
for any $ k \leq N $, we can obtain
\begin{corollary} Under the assumptions of Theorem \ref{Thm-Rel-Ent-Est}, if additionally  $H_N ( \rho_N | \bar{ \rho }^{ \otimes N } ) (0) \to 0$ as $N\to \infty$, then for all $t\in [0,T]$
		$$H_N ( \rho_N | \bar{ \rho }^{ \otimes N } ) (t) \to 0  \text{ as } N\to \infty\,.$$
Moreover, for any fixed $k\geq 1$
$$ \| \rho^k_N - \bar{\rho}^{ \otimes k } \|_{L^\infty(0,T;L^1(\mathbb{T}^d))} \to 0 \,, \text{ as } N \to \infty \,.$$
\end{corollary}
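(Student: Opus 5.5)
The statement immediately above is the corollary, and it follows directly from Theorem \ref{Thm-Rel-Ent-Est}; the plan is to feed the relative entropy estimate into subadditivity and the Csisz\'ar--Kullback--Pinsker inequality.

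\emph{Step 1: entropy convergence.} Fix $t\in[0,T]$. By Theorem \ref{Thm-Rel-Ent-Est},
$$0\le H_N(\rho_N|\bar\rho^{\otimes N})(t)\le e^{CMT}\Big(H_N(\rho_N|\bar\rho^{\otimes N})(0)+\frac1N\Big).$$
The bound uses $t\le T$ and $M\ge 0$. The constant $M$ depends only on $K$, $\sigma$, $d$ and $\bar\rho$ on $[0,T]$, not on $N$; it is finite because $\bar\rho\in C^1([0,T];C^2)$ and $\inf\bar\rho>0$. Nonnegativity of $H_N$ comes from Jensen's inequality, since both $\rho_N$ and $\bar\rho^{\otimes N}$ are probability densities. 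The right-hand side tends to $0$ by hypothesis, and the convergence is even uniform in $t\in[0,T]$.

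\emph{Step 2: reduction to $k$-marginals.} Fix $k\ge 1$ and let $N\ge k$. Subadditivity gives
$$H_k(\rho_N^k|\bar\rho^{\otimes k})(t)\le H_N(\rho_N|\bar\rho^{\otimes N})(t).$$
The only subtlety is that subadditivity, in the form with $\frac1k$ dominated by $\frac1N$, requires symmetry of $\rho_N$. I would justify this by noting that the Liouville equation \eqref{Liouville-Eq} is invariant under permutations of particles. So if the initial data are exchangeable, one may take $\rho_N$ symmetric, for instance by symmetrizing the solution, which does not increase the relative entropy by convexity. I would note this as the standing exchangeability assumption implicit in the statement.

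\emph{Step 3: $L^1$ convergence.} Apply Csisz\'ar--Kullback--Pinsker:
$$\|\rho^k_N(t)-\bar\rho^{\otimes k}(t)\|_{L^1}\le\sqrt{2k\,e^{CMT}\big(H_N(0)+1/N\big)}.$$
The right side is independent of $t$, so taking the essential supremum over $t\in[0,T]$ gives the $L^\infty(0,T;L^1)$ convergence as $N\to\infty$ for each fixed $k$.

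The only step requiring any care is Step 2, namely the symmetry needed for subadditivity; the rest is immediate from Theorem \ref{Thm-Rel-Ent-Est}.
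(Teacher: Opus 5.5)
Your proposal is correct and follows the paper's own argument: the bound of Theorem~\ref{Thm-Rel-Ent-Est}, uniform in $t\le T$, then subadditivity $H_k(\rho_N^k|\bar\rho^{\otimes k})\le H_N(\rho_N|\bar\rho^{\otimes N})$, then the Csisz\'ar--Kullback--Pinsker inequality. Your caveat that subadditivity needs $\rho_N$ to be symmetric is genuinely necessary, and the paper leaves it implicit in its ``indistinguishable particles'' setting: for non-exchangeable data such as $\rho_N^0=\mu^{\otimes k}\otimes\bar\rho_0^{\otimes(N-k)}$ with $\mu\neq\bar\rho_0$, one has $H_N(0)=\frac{k}{N}\int\mu\log\frac{\mu}{\bar\rho_0}\to 0$, while $\rho_N^k(0)=\mu^{\otimes k}$ does not converge to $\bar\rho_0^{\otimes k}$.
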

We remark that in Theorem \ref{Thm-Rel-Ent-Est}, the assumptions on  $\rho_N$ and $\bar{\rho}$ are justifies in Theorems \ref{Thm-entropy-solution-Liou} and \ref{Thm-solution-Limit-Eq}. In fact, the well-posedness of the solutions required for the two equations is established in Theorems \ref{Thm-entropy-solution-Liou} and \ref{Thm-solution-Limit-Eq} below, with proofs provided in Sections \ref{sec-proof-Thm-weak-solution-Liou-Eq} and \ref{sec-proof-Thm-solution-Limit-Eq}, respectively.

\begin{theorem}\label{Thm-entropy-solution-Liou}
	If the drift kernel $K$ satisfies Assumption \ref{Ass-A1} and the diffusion kernel $\sigma$  satisfies Assumption \ref{Ass-A2}, and if the initial data $\rho^0_N$  satisfies $ \rho^0_N \in L^1 \cap L^2 ( \mathbb{T}^{dN} )$ with $ \rho^0_N \geq 0 $, $ \int_{ \mathbb{T}^{dN} } \rho^0_N \d X = 1 $ and $ \int_{ \mathbb{T}^{dN} } \rho^0_N \log \rho^0_N < \infty $, then for any $T>0$, there exists an entropy solution $ \rho_N$ to \eqref{Liouville-Eq} on $[0,T]$ in the sense of Definition \ref{Def-1-1}.
\end{theorem}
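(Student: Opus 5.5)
We would construct the entropy solution by regularization and compactness. Mollify the drift, $K_\varepsilon = K * \eta_\varepsilon$, so that $K_\varepsilon$ is smooth with $\|K_\varepsilon\|_{L^\infty}\le \|K\|_{L^\infty}$ and $\|\div K_\varepsilon\|_{\dot W^{-1,\infty}}\le \|\div K\|_{\dot W^{-1,\infty}}$. Here $\div K_\varepsilon = \div(g*\eta_\varepsilon)$ whenever $\div K = \div g$. Also mollify the initial datum, $\rho^0_{N,\varepsilon} = \rho^0_N * \eta_\varepsilon$, which is smooth, positive after adding $\varepsilon$ and renormalizing, and converges to $\rho^0_N$ in $L^1\cap L^2$ with $\int \rho^0_{N,\varepsilon}\log\rho^0_{N,\varepsilon}\to\int\rho^0_N\log\rho^0_N$.

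For fixed $N$ and $\varepsilon$, the regularized Liouville equation is a linear parabolic equation on the compact manifold $\mathbb{T}^{dN}$. Its coefficients are smooth in the drift and $W^{2,\infty}$ in the diffusion. The diffusion matrix is block diagonal with entries $a^\alpha_i(X) = N^{-2}\big(\sum_k \sigma_{\alpha\alpha}(x_i-x_k)\big)^2 \ge \underline{\sigma}^2$, so it is uniformly elliptic. Writing the equation in divergence form, $\partial_t\rho = \sum_{i,\alpha}\partial_{x^\alpha_i}\big(a^\alpha_i\partial_{x^\alpha_i}\rho + \rho\,\partial_{x^\alpha_i}a^\alpha_i\big) - \nabla\cdot(\rho\, b_\varepsilon)$ with $\nabla a\in W^{1,\infty}$, classical theory (Galerkin, or Schauder after smoothing $\sigma$ too and passing to the limit) gives a unique nonnegative, mass-preserving solution $\rho_\varepsilon$ that is regular enough to justify computations. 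Nonnegativity comes from the maximum principle, and mass conservation from integrating the equation.

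We then derive uniform a priori bounds. First, an $L^2$ energy estimate: multiply by $\rho_\varepsilon$ and integrate. The diffusion gives $-\int a|\partial\rho|^2 + \int \rho\,\partial a\,\partial\rho$. The drift term equals $-\tfrac12\int \rho^2\,\div b_\varepsilon$; this is the delicate point, since $\div K$ is only in $\dot W^{-1,\infty}$. Writing $\div K_\varepsilon = \div g_\varepsilon$ and integrating by parts turns it into $\int \rho\, g_\varepsilon\cdot\nabla\rho$, which is bounded by $\|g\|_\infty\|\rho\|_2\|\nabla\rho\|_2$. Young's inequality absorbs this using ellipticity $\underline\sigma^2$, and Gronwall then yields bounds, uniform in $\varepsilon$ but not in $N$, on $\rho_\varepsilon$ in $L^\infty_tL^2\cap L^2_tH^1$. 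Second, the equation gives a bound on $\partial_t\rho_\varepsilon$ in $L^2_tH^{-1}$. By Aubin–Lions, a subsequence converges strongly in $L^2_{t,X}$ and weakly in $L^2_tH^1$ to some $\rho_N$. Since $K_\varepsilon\to K$ a.e. and is bounded, dominated convergence lets us pass to the limit in the weak formulation of Definition \ref{Def-1-1}. Nonnegativity and unit mass are preserved, and $\rho_N\in L^\infty(0,T;L^1\cap L^2)$.

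For the entropy inequality \eqref{Entropy- dissipation-inequality}, the smooth positive $\rho_\varepsilon$ satisfies the corresponding identity with equality. It is obtained by multiplying by $1+\log\rho_\varepsilon$ and integrating by parts twice in the diffusion term, which produces the $\partial^2 a$ term and the Fisher-information term. We pass to the limit term by term:
\begin{itemize}
\item The initial entropy converges by construction.
\item The drift term $\int \div K_\varepsilon(x_i-x_k)\rho_\varepsilon$ is understood as $-\int g_\varepsilon\cdot\nabla_{x_i}\rho_\varepsilon$ (consistent with the $\dot W^{-1,\infty}$ meaning). It converges by the weak $H^1$ convergence of $\rho_\varepsilon$ together with the a.e. convergence of the bounded $g_\varepsilon$.
\item The $\partial^2 a$ term converges by strong $L^2$ convergence, since $\partial^2 a\in L^\infty$.
\item The left side $\int\rho\log\rho$ is lower semicontinuous, by strong convergence and Fatou, using $\rho\log\rho\ge -e^{-1}$ on a finite-measure space.
\item The dissipation $\int a|\partial\rho|^2/\rho = 4\int a|\partial\sqrt\rho|^2$ is weakly lower semicontinuous. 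A uniform bound on $\sqrt{\rho_\varepsilon}$ in $L^2_tH^1$ comes from the entropy identity itself, and $\sqrt{\rho_\varepsilon}\to\sqrt{\rho_N}$ strongly.
\end{itemize}
Together these give the inequality for a.e. $t$.

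The main obstacle we anticipate is the handling of $\div K$, both in the $L^2$ estimate and in making sense of, and passing to the limit in, the term $\int \div K\,\rho$ in the entropy inequality. Both are resolved through the representation $\div K=\div g$ and the $H^1$-type control supplied by the uniformly elliptic multiplicative diffusion.
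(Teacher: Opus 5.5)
Your proposal is correct and follows essentially the paper's route: regularize, derive $\varepsilon$-uniform entropy and $L^\infty_tL^2\cap L^2_tH^1$ bounds, extract a limit, and pass to the limit in the weak formulation and, via lower semicontinuity of $\int\rho\log\rho$ and of the weighted Fisher information $4\int a\,|\partial\sqrt{\rho}|^2$, in the entropy identity. The differences are only technical: your Aubin--Lions strong compactness makes the identification $\sqrt{\rho_\varepsilon}\to\sqrt{\rho_N}$ more transparent than the paper's weak-compactness plus duality-formula argument, while your $\div K=\div g$ detour is unnecessary, since $K\in L^\infty$ already controls $\int\rho\,b_\varepsilon\cdot\nabla\rho$ and $\int b_\varepsilon\cdot\nabla\rho$, and the paper simply reads the drift term as $\frac{1}{N}\sum_{i,k}\int K(x_i-x_k)\cdot\nabla_{x_i}\rho_N$.
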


\begin{theorem}\label{Thm-solution-Limit-Eq}
	Under Assumption \ref{Ass-A2}, if additionally $K\in L^\infty(\mathbb{T}^d)$ and $\bar{\rho}_0 \in H^{s} ( \mathbb{T}^d )$ with $s > 2 + d/2 $, $\inf \bar{\rho}_0 > 0$ and $\int_{\mathbb{T}^d} \bar{\rho}_0 \d x = 1 $, then for any $T>0$, there exists a unique solution $\bar{\rho} \in C^1 ([0,T]; C^2 ( \mathbb{T}^d ) ) $ to \eqref{Limit-Eq} with $ \inf \bar{\rho} > 0 $ and $\int_{\mathbb{T}^d} \bar{\rho}\d x = 1 $.
\end{theorem}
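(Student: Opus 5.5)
The plan is to treat \eqref{Limit-Eq} as a quasilinear, uniformly parabolic equation whose coefficients depend on $\bar\rho$ only through convolutions. We set $a_\alpha := \sigma_{\alpha\alpha}*\bar\rho$ and $u := K*\bar\rho$, and expand the right-hand side in nondivergence form:
\[
\partial_t\bar\rho + u\cdot\nabla\bar\rho + \bar\rho\,(\div K*\bar\rho) = \sum_{\alpha=1}^d \Big( a_\alpha^2\,\partial^2_{x^\alpha}\bar\rho + 2\,\partial_{x^\alpha}(a_\alpha^2)\,\partial_{x^\alpha}\bar\rho + \bar\rho\,\partial^2_{x^\alpha}(a_\alpha^2) \Big).
\]
Here $\div K*\bar\rho$ is understood as $K*\nabla\bar\rho$, so only $K\in L^\infty$ is needed. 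If $\bar\rho\ge0$ and $\int\bar\rho=1$, then $a_\alpha\ge\underline\sigma$ by Assumption \ref{Ass-A2}, which gives uniform ellipticity. We also have $\|a_\alpha\|_{W^{2,\infty}}\le\|\sigma\|_{W^{2,\infty}}$ and $\|u\|_{L^\infty}\le\|K\|_{L^\infty}$. Higher derivatives are handled by moving them onto $\bar\rho$: $\partial^\gamma a_\alpha = \partial^2\sigma_{\alpha\alpha}*\partial^{\gamma-2}\bar\rho$ and $\partial^\gamma u = K*\partial^\gamma\bar\rho$. Young's inequality then bounds all coefficient norms by $\|\bar\rho\|_{H^{s}}$ at one order below what the energy estimate controls.

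\textbf{Step 1 (linear iteration and a priori bounds).} Set $\rho^0(t)=\bar\rho_0$. Given $\rho^n$, define $\rho^{n+1}$ as the solution of the \emph{linear} parabolic equation obtained by freezing $a_\alpha, u$ in the display above at $\rho^n$. Classical linear theory gives a smooth-in-time $H^s$ solution.
\begin{itemize}
\item Mass: the original divergence form shows that mass is conserved.
\item Positivity: the maximum principle for the nondivergence form, with bounded zeroth-order coefficient, gives $\inf\rho^{n+1}(t)\ge e^{-Ct}\inf\bar\rho_0>0$. Hence positivity and unit mass propagate through the iteration, and so do the bounds $a_\alpha\ge\underline\sigma$.
\item $H^s$ estimate: apply $\partial^\beta$ with $|\beta|\le s$, test against $\partial^\beta\rho^{n+1}$, and integrate by parts once in the top-order term. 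This yields the dissipation $\underline\sigma^2\|\nabla\partial^\beta\rho^{n+1}\|_{L^2}^2$. Commutators are bounded by Kato–Ponce/Moser estimates, using the convolution trick above together with $H^{s}\hookrightarrow W^{2,\infty}$ (valid since $s>2+d/2$). The dissipation absorbs the term with one extra derivative.
\end{itemize}
These give a bound $\frac{d}{dt}\|\rho^{n+1}\|_{H^s}^2\le C(\|\rho^n\|_{H^s})\|\rho^{n+1}\|_{H^s}^2$, hence uniform bounds on a short interval $[0,T_*]$. A contraction in $L^\infty_tL^2\cap L^2_tH^1$ then gives convergence to a solution in $L^\infty H^s\cap L^2H^{s+1}$, with continuity in time following in the standard way.

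\textbf{Step 2 (globalization and uniqueness).} The key point is that the coefficient bounds involving at most two derivatives of $a_\alpha$ and zero derivatives of $u$ depend only on the mass, which is conserved. Positivity also persists, with an explicit lower bound. Consequently the $H^s$ energy inequality for the actual solution can be arranged to be \emph{linear} in $\|\bar\rho\|_{H^s}$ modulo lower-order factors, and those factors are themselves controlled inductively in $s$ by Gronwall. This prevents blow-up of $\|\bar\rho\|_{H^s}$ on $[0,T]$, so the local solution extends to any $T$. For uniqueness, take two solutions with the same data and estimate $w=\bar\rho_1-\bar\rho_2$ in $L^2$. The coefficient differences are $\sigma*w$ and $K*w$, bounded by $\|w\|_{L^2}$, and the $W^{2,\infty}$ and Lipschitz bounds of both solutions make Gronwall close.

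\textbf{Main obstacle.} The hardest step is the final regularity $\bar\rho\in C^1([0,T];C^2)$. The energy method gives directly only $\partial_t\bar\rho\in C([0,T];H^{s-2})$, which need not embed into $C^2$. I would first try to upgrade using parabolic Schauder theory. The coefficients $a_\alpha^2$, $u$ and $\partial^2(a_\alpha^2)$ are Hölder in $(t,x)$, with at least one degree of smoothness on $a_\alpha$ beyond that of $\bar\rho$, because convolution with $\sigma\in W^{2,\infty}$ gains two derivatives. Bootstrapping should therefore give $\bar\rho\in C^{1+\theta/2,\,2+\theta}$ away from $t=0$, and time differentiability at the level needed for Theorem \ref{Thm-Rel-Ent-Est}. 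If full $C^1([0,T];C^2)$ up to $t=0$ fails, I would instead verify that the proof of Theorem \ref{Thm-Rel-Ent-Est} only uses $\partial_t\bar\rho$, $\nabla\bar\rho$ and $\nabla^2\bar\rho$ bounded, which the $H^s$ bounds already provide.
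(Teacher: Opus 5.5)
Your proposal follows essentially the same route as the paper: a frozen-coefficient linear iteration with maximum-principle positivity and mass conservation, uniform short-time $H^s$ bounds, an $L^2$ contraction, continuation via an a priori $H^s$ estimate proved inductively in the derivative order, and $L^2$ uniqueness. The obstacle you flag is genuine, and your fallback is in fact what the paper establishes: its proof only shows $\partial_t\bar\rho^n\to\partial_t\bar\rho$ in $C([0,T_*];C(\mathbb{T}^d))$, i.e.\ $\bar\rho\in C^1([0,T];C(\mathbb{T}^d))\cap C([0,T];C^2(\mathbb{T}^d))$, and this weaker regularity is all the proof of Theorem~\ref{Thm-Rel-Ent-Est} actually uses (the literal $C^1([0,T];C^2)$ can fail at $t=0$: for $K=0$ and constant $\sigma=cI$ the equation is the heat equation, and $\partial_t\bar\rho(0)=c^2\Delta\bar\rho_0$ need not be $C^2$ for $\bar\rho_0\in H^s$).
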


\begin{remark}
	The condition $\div K\in \dot{W}^{-1,\infty}$ is indeed not used in the proof of Theorem \ref{Thm-solution-Limit-Eq} due to the regularizing effect of convolution; see Sec. \ref{sec-proof-Thm-solution-Limit-Eq} for details.
\end{remark}

\subsection{Method and difficulties}
As noted, the proof of the main result (Theorem \ref{Thm-Rel-Ent-Est}) follows the relative entropy framework of \cite{JW2018}. We begin by deriving the evolution of the relative entropy  between $\rho_N$ and $\bar{ \rho }_N$,  obtaining
\begin{equation*}
	\begin{aligned}
	  H_N (t) \leq & H_N(0) + \underbrace{ \int^t_0\int_{\mathbb{T}^{dN}} \rho_N \dfrac{1}{N^2} \sum\limits^N_{ i,k=1 } \phi_1 (x_i,x_k) }_{=:E_1} + \underbrace{ \int^t_0\int_{\mathbb{T}^{dN}} \rho_N \frac{1}{N^3} \sum\limits^N_{ i,j,k=1 } \phi_2(x_i,x_j,x_k) }_{=:E_2} \\
		&- \underbrace{ \frac{1}{N^3} \sum\limits^N_{ i=1 } \sum^d_{ \alpha=1 } \int^t_0\int_{\mathbb{T}^{dN}} \rho_N ( \sum\limits^N_{ k=1 } \sigma_{\alpha\alpha}(x_i-x_k) )^2 \left( \partial_{ x^\alpha_i } \log \frac{\rho_N}{ \bar{\rho}_N} \right)^2 }_{=:E_3}\,,
	\end{aligned}
\end{equation*}
where $E_1$ and $E_2$ are the error terms associated with the drift kernel $K$ and the diffusion kernel $\sigma$ , respectively, and $E_3$ is an weighted relative Fisher information term (see \eqref{Ent-evo-ineq}). The full derivation is provided in Section \ref{sec-proof-Thm-Rel-Ent-Est}.

The term $E_2$ is indeed the essential difference between our analysis and that of \cite{JW2018}. This distinction arises because the diffusion terms in our equations \eqref{Liouville-Eq} and \eqref{Limit-Eq} have a more complex structure than the constant diffusion coefficients considered in \cite{JW2018}. Consequently, the terms involving $\sigma$ cannot be absorbed into a relative Fisher information term as in \cite{JW2018}, leading directly to the new error term $E_2$. Moreover, the $E_1$ term can be handled directly by applying Lemma 4 of \cite{JW2018}, which is derived from Theorem 3 (an exponential law of large numbers) and Theorem 4 (a large deviation estimate) therein.

We now turn to the treatment of $E_2$. A natural idea is to adapt the proof of Theorem 4 in \cite{JW2018}, where $E_1$ is handled by leveraging two cancellation properties of $\phi_1$ and a combinatorics argument. This motivates a similar strategy for $E_2$ because $E_2$ shares a similar form with $E_1$ and $\phi_2$ satisfies two analogous cancellation properties, detailed in \eqref{Cancel-rules}. However, $\phi_2$ contains nonlinear terms such as $\sigma_{\alpha\alpha}(x_i-x_j) \sigma_{\alpha\alpha}(x_i-x_k)$, a structure absent in $\phi_1$. This nonlinearity disrupts the combinatorics argument from \cite{JW2018}, preventing its direct application and constituting the main difficulty of our proof. 
To overcome it, we develop a dynamic combinatorial counting rule which can effectively handle the nonlinearity in Section \ref{sec-Thm-LDE}. This allows us to prove a new large deviation
estimate for $\sigma$ (see Theorem \ref{Thm-LDE}) and obtain a uniform bound for $E_2$. With these estimates in hand, the main theorem follows by the Gronwall lemma.

\vspace{2mm}
 
\noindent{\bf Organization of the paper:} The next section is devoted to deriving the main result under the assumption of Theorem \ref{Thm-LDE} (the large deviation
estimate for the diffusion kernel $\sigma$). In Section \ref{sec-Thm-LDE}, we detail our combinatorics argument based on two cancellation properties for $\sigma$ and prove Theorem \ref{Thm-LDE}. Concluding the paper, we establish the existence and uniqueness of classical solutions to the limiting Eq. \eqref{Limit-Eq} in Section \ref{sec-proof-Thm-solution-Limit-Eq} and the existence of entropy solutions to the Liouville Eq. \eqref{Liouville-Eq} in Section \ref{sec-proof-Thm-weak-solution-Liou-Eq}.
 
\section{Proof of Theorem \ref{Thm-Rel-Ent-Est}}\label{sec-proof-Thm-Rel-Ent-Est} 

The goal here is to prove the propagation of chaos for the particle system \eqref{Particle-sys}, i.e. Theorem \ref{Thm-Rel-Ent-Est}. As mentioned earlier, we follow the relative entropy method of \cite{JW2016, JW2018} and assume the results of Theorems \ref{Thm-entropy-solution-Liou} and \ref{Thm-solution-Limit-Eq}, whose proofs will be given later. 

\vspace{2mm}
\noindent\textbf{\emph{Step 1: Derive the time evolution of relative entropy.}} By the definition of the rescaled relative entropy, we know 
\begin{equation}\label{HN}
	H_N(t) = \frac{1}{N} \int_{ \mathbb{T}^{ dN } } \rho_N \log (\frac{ \rho_N }{ \bar{\rho}_N } ) = \frac{1}{N} \int_{ \mathbb{T}^{ dN } } \rho_N \log \rho_N -\frac{1}{N} \int_{ \mathbb{T}^{ dN } } \rho_N \log \bar{\rho}_N := \frac{1}{N} ( I_1 - I_2 ) \,.
\end{equation}
 For $I_1$, as $\rho_N$ is an entropy solution of Equation \eqref{Liouville-Eq} satisfying the inequality \eqref{Entropy- dissipation-inequality}, we have 
 \begin{equation}\label{ineq-I1}
	\begin{aligned}
		I_1 \leq & \int_{\mathbb{T}^{ dN } } \rho^0_N \log \rho^0_N \d X - \dfrac{1}{N} \sum \limits^N_{ i=1 } \sum^N_{ k=1 } \int^t_0 \int_{ \mathbb{T}^{ dN } }  \div K (x_i-x_k)   \rho_N \d X \d s \\
		& +
		\frac{1}{ N^2 } \sum\limits^N_{ i=1 } \sum^d_{ \alpha = 1 } \int^t_0 \int_{ \mathbb{T}^{ dN } } \partial^2_{ x^\alpha_i } \Big( \sum\limits_{ k = 1 }^N \sigma_{ \alpha\alpha } ( x_i - x_k ) \Big)^2  \rho_N \d X \d s\\
		& - \frac{1}{N^2} \sum\limits^N_{ i=1 } \sum^d_{ \alpha = 1 } \int^t_0 \int_{ \mathbb{T}^{ dN } } \Big( \sum\limits_{ k = 1 }^N \sigma_{ \alpha\alpha } ( x_i - x_k ) \Big)^2 \frac{ ( \partial_{ x^\alpha_i } \rho_N)^2 }{ \rho_N } \d X \d s \,.
	\end{aligned}
\end{equation}
Note that $\rho_N$ satisfies Equation \eqref{Liouville-Eq} in the sense of distribution. Since $\bar{\rho} \in C^1 (0,T; C^2 ( \mathbb{T}^d ) ) $ and $ \inf \bar{\rho} > 0 $, $ \log \bar{\rho}_N $ can be used as a test function. This yields
\begin{equation*}
	\begin{aligned}
		I_2 = & \int_{ \mathbb{T}^{ dN } } \rho^0_N \log \bar{\rho}^0_N
		+ \int^t_0 \int_{ \mathbb{T}^{ dN } } \rho_N \partial_t \log \bar{\rho}_N + \dfrac{1}{N} \sum\limits^N_{ i=1 } \sum \limits_{ k=1 }^N \int^t_0 \int_{ \mathbb{T}^{ dN } } \rho_N K ( x_i - x_k ) \cdot \nabla_{x_i} \log \bar{\rho}_N  \\
		& + \frac{1}{N^2} \sum\limits^N_{ i=1 } \sum^d_{ \alpha=1 } \int^t_0 \int_{ \mathbb{T}^{ dN } }  \Big( \sum\limits^N_{ k=1 } \sigma_{\alpha\alpha} ( x_i - x_k ) \Big)^2 \partial^2_{ x^\alpha_i } \log \bar{\rho}_N \,.
	\end{aligned}
\end{equation*}
To facilitate later computations, we rewrite $I_2$ as
\begin{equation*}
	\begin{aligned}
		I_2 = & \int_{ \mathbb{T}^{dN} } \rho^0_N \log \bar{\rho}^0_N
		+\int^t_0 \int_{ \mathbb{T}^{dN} } \rho_N \partial_t \log \bar{\rho}_N  +\dfrac{1}{N} \sum\limits^N_{ i=1 }\sum \limits_{ k=1 }^N \int^t_0 \int_{ \mathbb{T}^{dN} } \rho_N  K ( x_i - x_k ) \cdot \nabla_{x_i} \log \bar{\rho}_N \\
		&+ \frac{2}{N^2} \sum\limits^N_{ i=1 } \sum^d_{ \alpha=1 } \int^t_0\int_{ \mathbb{T}^{dN} } \rho_N \Big( \sum\limits^N_{ k=1 } \sigma_{ \alpha\alpha } ( x_i - x_k ) \Big)^2 \partial^2_{ x^\alpha_i } \log \bar{\rho}_N \\
		& -\frac{1}{N^2} \sum\limits^N_{ i=1 }\sum^d_{ \alpha=1 } \int^t_0 \int_{ \mathbb{T}^{dN} } \rho_N \Big( \sum\limits^N_{ k=1 } \sigma_{ \alpha\alpha } ( x_i - x_k ) \Big)^2 \partial^2_{ x^\alpha_i } \log \bar{\rho}_N \,.
	\end{aligned}
\end{equation*}
By applying integration by parts to the fourth term of the above equation and using the equality
\begin{equation*}
	\partial^2_{ x^\alpha_i } \log \bar{\rho}_N =	\frac{ \partial^2_{ x^\alpha_i } \bar{\rho}_N }{ \bar{\rho}_N } - \frac{ | \partial_{ x^\alpha_i } \bar{\rho}_N |^2 }{ \bar{\rho}^2_N }
\end{equation*}
to its last term, we have
 \begin{equation}\label{eq-I2}
 	\begin{aligned}
 		I_2
 		 = & \int_{ \mathbb{T}^{dN} } \rho^0_N \log \bar{\rho}^0_N
 		+ \int^t_0 \int_{ \mathbb{T}^{dN} } \rho_N \partial_t \log \bar{\rho}_N + \dfrac{1}{N} \sum\limits^N_{ i=1 } \sum\limits_{ k=1 }^N \int^t_0 \int_{ \mathbb{T}^{dN} } \rho_N  K ( x_i - x_k ) \cdot \nabla_{x_i} \log \bar{\rho}_N \\
 		& - \frac{2}{N^2} \sum^N_{ i=1 } \sum^d_{ \alpha=1 } \int^t_0 \int_{ \mathbb{T}^{dN} } \rho_N  \partial_{ x^\alpha_i } \Big( \sum^N_{ k=1 } \sigma_{ \alpha\alpha } ( x_i - x_k ) \Big)^2 \partial_{ x^\alpha_i } \log \bar{\rho}_N \\
 		& -  \frac{2}{N^2} \sum^N_{ i=1 } \sum^d_{ \alpha=1 }\int^t_0 \int_{ \mathbb{T}^{dN} } \rho_N  \Big( \sum^N_{ k=1 } \sigma_{ \alpha\alpha } ( x_i - x_k ) \Big)^2 \partial_{ x^\alpha_i } \log \rho_N \partial_{ x^\alpha_i } \log \bar{\rho}_N \\
 		& - \frac{1}{N^2} \sum\limits^N_{ i=1 } \sum^d_{ \alpha=1 } \int^t_0 \int_{ \mathbb{T}^{dN} } \rho_N \Big( \sum\limits^N_{ k=1 } \sigma_{ \alpha\alpha } ( x_i - x_k ) \Big)^2 \frac{ \partial^2_{ x^\alpha_i } \bar{\rho}_N}{ \bar{\rho}_N } \\
 		& + \frac{1}{N^2} \sum\limits^N_{ i=1 } \sum^d_{ \alpha=1 } \int^t_0 \int_{ \mathbb{T}^{dN} } \rho_N \Big( \sum\limits^N_{ k=1 } \sigma_{ \alpha\alpha }( x_i - x_k ) \Big)^2 \frac{ | \partial_{ x^\alpha_i } \bar{\rho}_N |^2 }{ \bar{\rho}^2_N } \,.
 	\end{aligned}
 \end{equation}
Substituting \eqref{ineq-I1} and \eqref{eq-I2} into \eqref{HN} yields
\begin{equation}\label{HN-bar-I}
	H_N(t) \leq H_N(0) + \frac{1}{N} \int^t_0\int_{ \mathbb{T}^{dN} } \rho_N ( \bar{I}_1 -\bar{I}_2 ) \,,
\end{equation}
where
\begin{equation}\label{bar-I1}
	\begin{aligned}
		\bar{I}_1 = &- \dfrac{1}{N} \sum\limits^N_{ i=1 } \sum\limits^N_{ k=1 } \nabla_{x_i} \cdot  K ( x_i - x_k ) + \frac{1}{N^2} \sum\limits^N_{ i=1 } \sum^d_{ \alpha=1 } \partial^2_{ x^\alpha_i } \Big( \sum\limits_{ k=1 }^N \sigma_{ \alpha\alpha } ( x_i - x_k ) \Big)^2 \\
		& -  \frac{1}{N^2} \sum\limits^N_{ i=1 } \sum^d_{ \alpha=1 } \Big(  \sum\limits_{ k=1 }^N \sigma_{ \alpha\alpha } ( x_i - x_k ) \Big)^2 \frac{ | \partial_{x^\alpha_i} \rho_N |^2 }{ \rho^2_N } \\
	\end{aligned}
\end{equation}
and
\begin{equation}\label{bar-I2-1}
	\begin{aligned}
		\bar{I}_2 = & \partial_t \log \bar{\rho}_N + \dfrac{1}{N} \sum\limits^N_{ i=1 } \sum\limits_{ k=1 }^N K ( x_i - x_k ) \cdot \nabla_{x_i} \log \bar{\rho}_N \\ & - \frac{2}{N^2} \sum^N_{ i=1 } \sum^d_{ \alpha=1 } \partial_{ x^\alpha_i } \Big( \sum^N_{ k=1 } \sigma_{ \alpha\alpha } ( x_i - x_k )  \Big)^2 \partial_{ x^\alpha_i } \log \bar{\rho}_N \\
		&- \frac{2}{N^2} \sum^N_{ i=1 } \sum^d_{ \alpha=1 } \Big( \sum^N_{ k=1 } \sigma_{ \alpha\alpha } ( x_i - x_k ) \Big)^2 \partial_{ x^\alpha_i } \log \rho_N \partial_{ x^\alpha_i } \log \bar{\rho}_N \\
		&- \frac{1}{N^2} \sum\limits^N_{ i=1 } \sum^d_{ \alpha=1 } \Big( \sum\limits^N_{ k=1 } \sigma_{ \alpha\alpha } ( x_i - x_k ) \Big)^2 \frac{ \partial^2_{ x^\alpha_i } \bar{\rho}_N }{ \bar{\rho}_N } + \frac{1}{N^2} \sum\limits^N_{ i=1 } \sum^d_{ \alpha=1 } \Big( \sum\limits^N_{ k=1 } \sigma_{ \alpha\alpha } ( x_i - x_k ) \Big)^2 \frac{ | \partial_{ x^\alpha_i } \bar{\rho}_N |^2}{ \bar{\rho}^2_N } \,.
	\end{aligned}
\end{equation}

\noindent\textbf{\emph{Step 2: Reorganize the terms in $ \bar{I}_1 - \bar{I}_2 $.}}  We first rewrite $\bar{I}_2$.
Recalling the equation \eqref{Limit-Eq} and noting that $\partial_t \bar{\rho}_N = \Pi^N_{ j \neq i } \bar{\rho}(x_j) \partial_t \bar{\rho}(x_i) $, we have
\begin{equation*}
	\begin{aligned}
		\partial_t \bar{\rho}_N
		= &
		- \sum^N_{i=1} \nabla_{x_i} \cdot  K*\bar{\rho}(x_i) \bar{\rho}_N - \sum^N_{i=1} K*\bar{\rho}(x_i) \cdot \nabla_{x_i} \bar{\rho}_N + \sum^N_{ i=1 } \sum^d_{ \alpha=1 } \partial^2_{ x^\alpha_i } \big( \sigma_{ \alpha\alpha }*\bar{\rho}(x_i) \big)^2 \bar{\rho}_N \\
		& + 2 \sum^N_{ i=1 } \sum^d_{ \alpha=1 } \partial_{ x^\alpha_i } \big( \sigma_{ \alpha\alpha }*\bar{\rho}(x_i) \big)^2  \partial_{ x^\alpha_i } \bar{\rho}_N + \sum^N_{ i=1 } \sum^d_{ \alpha=1 } \big( \sigma_{ \alpha\alpha }*\bar{\rho}(x_i) \big)^2 \partial^2_{ x^\alpha_i } \bar{\rho}_N \,.
	\end{aligned}
\end{equation*}
 Combined with the relation $\partial_t \log \bar{\rho}_N = \frac{1}{ \bar{\rho}_N } \partial_t \bar{\rho}_N $, it follows from \eqref{bar-I2-1} that
\begin{equation*}
	\begin{aligned}
		\bar{I}_2 = & -\sum^N_{ i=1 } \nabla_{x_i} \cdot K*\bar{\rho}(x_i) - \sum^N_{ i=1 } K*\bar{\rho}(x_i)  \cdot \nabla_{x_i} \log \bar{\rho}_N
		+ \sum^N_{ i=1 } \sum^d_{ \alpha=1 } \partial^2_{ x^\alpha_i } \big( \sigma_{ \alpha\alpha }*\bar{\rho}(x_i) \big)^2 \\
		& + 2 \sum^N_{ i=1 } \sum^d_{ \alpha=1 } \partial_{ x^\alpha_i } \big( \sigma_{ \alpha\alpha }*\bar{\rho}(x_i) \big)^2 \partial_{ x^\alpha_i } \log \bar{\rho}_N + \sum^N_{ i=1 } \sum^d_{ \alpha=1 } \big( \sigma_{ \alpha\alpha }*\bar{\rho}(x_i) \big)^2 \frac{ \partial^2_{ x^\alpha_i } \bar{\rho}_N }{ \bar{\rho}_N } \\
		& + \dfrac{1}{N} \sum\limits^N_{ i=1 } \sum\limits_{ k=1 }^N K( x_i - x_k ) \cdot \nabla_{x_i} \log \bar{\rho}_N - \frac{2}{N^2} \sum^N_{ i=1 } \sum^d_{ \alpha=1 } \partial_{ x^\alpha_i } \Big( \sum^N_{ k=1 } \sigma_{ \alpha\alpha } ( x_i - x_k ) \Big)^2 \partial_{ x^\alpha_i } \log \bar{\rho}_N \\
		&- \frac{1}{N^2} \sum\limits^N_{ i=1 } \sum^d_{ \alpha=1 } \Big( \sum\limits^N_{ k=1 } \sigma_{ \alpha\alpha } ( x_i - x_k ) \Big)^2 \frac{ \partial^2_{ x^\alpha_i } \bar{\rho}_N }{ \bar{\rho}_N } \\
		& + \frac{1}{N^2} \sum\limits^N_{ i=1 } \sum^d_{ \alpha=1 } \Big( \sum\limits^N_{ k=1 } \sigma_{ \alpha\alpha } ( x_i - x_k ) \Big)^2 \Big( - 2 \partial_{ x^\alpha_i } \log \rho_N \partial_{ x^\alpha_i } \log \bar{\rho}_N + \frac{ | \partial_{ x^\alpha_i } \bar{\rho}_N |^2 }{ \bar{\rho}^2_N } \Big) \,.
	\end{aligned}
\end{equation*}
Note that for some specific terms in the above equation, the following relationships hold:
\begin{itemize}
	\item the 2nd and 6th terms: 
	\begin{equation*}
		\begin{aligned}
			- &\sum^N_{i=1} K*\bar{\rho}(x_i) \cdot \nabla_{x_i} \log \bar{\rho}_N +\frac{1}{N} \sum^N_{i=1} \sum\limits_{ k=1 }^N K ( x_i - x_k ) \cdot \nabla_{x_i} \log \bar{\rho}_N \\
			& = \frac{1}{N} \sum^N_{i=1} \sum\limits_{ k=1 }^N \Big( K ( x_i - x_k ) - K*\bar{\rho}(x_i) \Big) \cdot \nabla_{x_i} \log \bar{\rho}_N \,;
		\end{aligned}
	\end{equation*}
	\item the 4th and 7th terms:
	\begin{equation*}
		\begin{aligned}
			& 2 \sum^N_{ i=1 } \sum^d_{ \alpha=1 } \partial_{ x^\alpha_i } ( \sigma_{ \alpha\alpha } *\bar{\rho} (x_i) )^2 \partial_{ x^\alpha_i } \log \bar{\rho}_N  -\frac{2}{N^2} \sum^N_{ i=1 } \sum^d_{ \alpha=1 } \partial_{ x^\alpha_i } \Big( \sum^N_{ k=1 }\sigma_{ \alpha\alpha }( x_i - x_k ) \Big)^2 \partial_{ x^\alpha_i } \log \bar{\rho}_N \\
			= & - \frac{2}{N^2} \sum^N_{ i,j,k=1 } \sum^d_{ \alpha=1 } \partial_{ x^\alpha_i } \Big(  \sigma_{ \alpha\alpha } ( x_i - x_k ) \sigma_{ \alpha\alpha } ( x_i - x_j ) - ( \sigma_{\alpha\alpha}*\bar{\rho} (x_i) )^2 \Big) \partial_{ x^\alpha_i } \log \bar{\rho}_N \,;
		\end{aligned}
	\end{equation*}
	\item the 5th and 8th terms:
	\begin{equation*}
		\begin{aligned}
			& \sum^N_{ i=1 } \sum^d_{ \alpha=1 } \big( \sigma_{ \alpha\alpha }*\bar{\rho}(x_i) \big)^2 \frac{ \partial^2_{ x^\alpha_i } \bar{\rho}_N }{ \bar{\rho}_N } - \frac{1}{N^2} \sum\limits^N_{ i=1 } \sum^d_{ \alpha=1 } \Big( \sum\limits^N_{ k=1 } \sigma_{ \alpha\alpha } ( x_i - x_k ) \Big)^2 \frac{ \partial^2_{ x^\alpha_i } \bar{\rho}_N }{ \bar{\rho}_N } \\
			= & - \frac{1}{N^2} \sum\limits^N_{ i,j,k=1 } \sum^d_{ \alpha=1 } \Big( \sigma_{ \alpha\alpha }( x_i - x_j ) \sigma_{ \alpha\alpha } ( x_i - x_k ) - ( \sigma_{ \alpha\alpha }*\bar{\rho}(x_i) )^2 \Big) \frac{ \partial^2_{ x^\alpha_i } \bar{\rho}_N }{ \bar{\rho}_N }\,.
		\end{aligned}
	\end{equation*}
\end{itemize}
Hence
\begin{equation}\label{bar-I2-2}
	\begin{aligned}
		\bar{I}_2 = & - \sum^N_{ i=1 } \nabla_{x_i} \cdot K*\bar{\rho}(x_i) + \sum^N_{ i=1 } \sum^d_{ \alpha=1 } \partial^2_{ x^\alpha_i } ( \sigma_{\alpha\alpha} *\bar{\rho}(x_i) )^2 \\
        &+ \sum^N_{i=1} \sum \limits_{ k=1 }^N \Big( K ( x_i - x_k ) - K*\bar{\rho}(x_i) \Big) \cdot \nabla_{x_i} \log \bar{\rho}_N \\
		& - \frac{1}{N^2} \sum\limits^N_{ i,j,k=1 } \sum^d_{ \alpha=1 } \Big( \sigma_{ \alpha\alpha } ( x_i - x_j ) \sigma_{\alpha\alpha} ( x_i - x_k ) - ( \sigma_{\alpha\alpha}*\bar{\rho}(x_i) )^2 \Big) \frac{ \partial^2_{ x^\alpha_i } \bar{\rho}_N }{ \bar{\rho}_N } \\
		& - \frac{2}{N^2} \sum^N_{ i,j,k=1 } \sum^d_{ \alpha=1 } \partial_{ x^\alpha_i } \Big(  \sigma_{\alpha\alpha} ( x_i - x_k ) \sigma_{\alpha\alpha} ( x_i - x_j ) - ( \sigma_{\alpha\alpha}*\bar{\rho}(x_i) )^2 \Big) \partial_{ x^\alpha_i } \log \bar{\rho}_N \\
		& + \frac{1}{N^2} \sum\limits^N_{ i=1 } \sum^d_{ \alpha=1 }( \sum\limits^N_{ k=1 } \sigma_{\alpha\alpha} ( x_i - x_k ) )^2 \Big( - 2 \partial_{ x^\alpha_i } \log \rho_N \partial_{ x^\alpha_i } \log \bar{\rho}_N + \frac{ | \partial_{ x^\alpha_i } \bar{\rho}_N |^2}{ \bar{\rho}^2_N } \Big) \,.
	\end{aligned}
\end{equation}
Combining \eqref{bar-I1} and \eqref{bar-I2-2} gives
\begin{equation*}
	\begin{aligned}
		\bar{I}_1 - \bar{I}_2 = & - \Big( \dfrac{1}{N} \sum\limits^N_{ i=1 } \sum\limits^N_{ k=1 }  \nabla_{x_i} \cdot K ( x_i - x_k )  - \sum^N_{ i=1 } \nabla_{x_i} \cdot K*\bar{\rho}(x_i)  \Big) \\
		& - \sum^N_{ i=1 } \sum\limits_{ k=1 }^N \Big( K ( x_i - x_k ) - K*\bar{\rho}(x_i) \Big) \cdot \nabla_{x_i} \log \bar{\rho}_N \\
		&+  \frac{1}{N^2} \sum\limits^N_{ i=1 } \sum^d_{ \alpha=1 } 
		\Big( \partial^2_{x^\alpha_i} \Big( \sum\limits_{ k=1 }^N \sigma_{\alpha\alpha} ( x_i - x_k ) \Big)^2 -  \partial^2_{ x^\alpha_i } ( \sigma_{ \alpha\alpha} *\bar{\rho}(x_i) )^2  \Big) \\
		& +  \frac{1}{N^2} \sum\limits^N_{ i,j,k=1 } \sum^d_{ \alpha=1 } \Big( \sigma_{\alpha\alpha} ( x_i - x_j ) \sigma_{\alpha\alpha} ( x_i - x_k ) - ( \sigma_{\alpha\alpha}*\bar{\rho}(x_i) )^2 \Big) \frac{ \partial^2_{ x^\alpha_i } \bar{\rho}_N }{ \bar{\rho}_N } \\
		& + \frac{2}{N^2} \sum^N_{ i,j,k=1 } \sum^d_{ \alpha=1 } \partial_{ x^\alpha_i } \Big(  \sigma_{\alpha\alpha} ( x_i - x_k ) \sigma_{\alpha\alpha} ( x_i - x_j ) - ( \sigma_{\alpha\alpha}*\bar{\rho}(x_i) )^2 \Big) \partial_{ x^\alpha_i } \log \bar{\rho}_N \\
		&- \frac{1}{N^2} \sum\limits^N_{ i=1 } \sum^d_{ \alpha=1 } \Big( \sum\limits^N_{ k=1 } \sigma_{\alpha\alpha} ( x_i - x_k ) \Big)^2 \Big( \frac{ | \partial_{ x^\alpha_i } \rho_N |^2 }{ \rho^2_N }- 2 \partial_{ x^\alpha_i } \log \rho_N \partial_{ x^\alpha_i } \log \bar{\rho}_N + \frac{ |\partial_{ x^\alpha_i } \bar{\rho}_N |^2}{ \bar{\rho}^2_N } \Big) \,.
	\end{aligned}
\end{equation*}
Note that 
\begin{equation*}
	\frac{ (\partial_{ x^\alpha_i } \rho_N )^2 }{ \rho^2_N }- 2 \partial_{ x^\alpha_i } \log \rho_N \partial_{ x^\alpha_i } \log \bar{\rho}_N + \frac{ ( \partial_{ x^\alpha_i } \bar{\rho}_N )^2}{ \bar{\rho}^2_N } = \left( \partial_{ x^\alpha_i } \log \frac{ \rho_N }{ \bar{\rho}_N } \right)^2 \,.
\end{equation*}
Then we derive
\begin{equation}\label{New-bar-I1-I2}
	\begin{aligned}
		\bar{I}_1 - \bar{I}_2 
		= & \dfrac{1}{N} \sum\limits^N_{ i,k=1 } \phi_1(x_i,x_k) + \frac{1}{N^2} \sum\limits^N_{ i,j,k=1 } \phi_2(x_i,x_j,x_k) \\
		& -  \frac{1}{N^2} \sum\limits^N_{ i=1 } \sum^d_{ \alpha=1 } \Big( \sum\limits^N_{ k=1 } \sigma_{\alpha\alpha}( x_i - x_k ) \Big)^2 \left( \partial_{ x^\alpha_i } \log \frac{\rho_N}{ \bar{\rho}_N }  \right)^2\,,
	\end{aligned}
\end{equation}
where
\begin{equation}\label{phi1-phi2}
	\begin{aligned}
		\phi_1(x_i,x_k) = & - \big( \div K (x_i-x_k)  - \div K*\bar{\rho}(x_i)  \big)
		-\big( K(x_i - x_k) - K*\bar{\rho}(x_i) \big) \cdot \nabla_{x_i} \log \bar{\rho}_N \,,\\
		\phi_2(x_i,x_j,x_k) = & \sum^d_{ \alpha=1 } \partial^2_{ x^\alpha_i } \big(  \sigma_{\alpha\alpha}( x_i - x_j) \sigma_{\alpha\alpha}(x_i - x_k) - ( \sigma_{\alpha\alpha}*\bar{\rho}(x_i) )^2  \big) \\
		& + 2 \sum^d_{ \alpha=1 } \partial_{ x^\alpha_i } \big(  \sigma_{\alpha\alpha}(x_i-x_j) \sigma_{\alpha\alpha}(x_i-x_k) - ( \sigma_{\alpha\alpha}*\bar{\rho}(x_i) )^2 \big) \partial_{ x^\alpha_i } \log \bar{\rho}(x_i)\\
		& + \sum^d_{ \alpha=1 } \big( \sigma_{\alpha\alpha}(x_i-x_j) \sigma_{\alpha\alpha}(x_i-x_k) - ( \sigma_{\alpha\alpha}*\bar{\rho}(x_i) )^2 \big) \frac{ \partial^2_{x^\alpha_i} \bar{\rho}(x_i) }{ \bar{\rho}(x_i) } \,.
	\end{aligned}
\end{equation}
From \eqref{HN-bar-I} and \eqref{New-bar-I1-I2}, we obtain 
\begin{equation}\label{Ent-evo-ineq}
	\begin{aligned}
		H_N(t) \leq & H_N(0) + \int^t_0\int_{\mathbb{T}^{dN}} \rho_N \dfrac{1}{N^2} \sum\limits^N_{ i,k=1 } \phi_1 (x_i,x_k) + \int^t_0\int_{\mathbb{T}^{dN}} \rho_N \frac{1}{N^3} \sum\limits^N_{ i,j,k=1 } \phi_2(x_i,x_j,x_k) \\
		&-  \frac{1}{N^3} \sum\limits^N_{ i=1 } \sum^d_{ \alpha=1 } \int^t_0\int_{\mathbb{T}^{dN}} \rho_N \big( \sum\limits^N_{ k=1 } \sigma_{\alpha\alpha}(x_i-x_k) \big)^2 \left( \partial_{ x^\alpha_i } \log \frac{\rho_N}{ \bar{\rho}_N} \right)^2 \\
		\leq & H_N(0) + \int^t_0\int_{\mathbb{T}^{dN}} \rho_N \dfrac{1}{N^2} \sum\limits^N_{ i,k=1 } \phi_1 (x_i,x_k) + \int^t_0\int_{\mathbb{T}^{dN}} \rho_N \frac{1}{N^3} \sum\limits^N_{ i,j,k=1 } \phi_2(x_i,x_j,x_k) \\
		& -  \frac{ \underline{\sigma}^2 }{N} \sum\limits^N_{ i=1 } \int^t_0\int_{\mathbb{T}^{dN}} \rho_N \left| \nabla_{x_i} \log \frac{\rho_N}{ \bar{\rho}_N }  \right|^2\,,
	\end{aligned}
\end{equation}
where the last inequality is due to Assumption \ref{Ass-A2}, with $\phi_1$ and $\phi_2$ given by \eqref{phi1-phi2}. Then applying Lemma 4 in \cite{JW2018} under Assumption \ref{Ass-A1} gives
\begin{equation*}
	\begin{aligned}
		\int_{\mathbb{T}^{dN}} \rho_N \dfrac{1}{N^2} \sum\limits^N_{ i,k=1 } \phi_1 (x_i,x_k) \leq \frac{ \underline{\sigma}^2 }{4N} \sum\limits^N_{ i=1 } \int_{\mathbb{T}^{dN}} \rho_N \left| \nabla_{x_i} \log \frac{\rho_N}{ \bar{\rho}_N }  \right|^2 + CM_K( H_N(t) + \frac{1}{N} )\,,
	\end{aligned}
\end{equation*}
where $C$ is a universal constant and
\begin{equation*}
	M_K = ( \|K\|_{L^\infty} + \| \div K \|_{ \dot{W}^{-1,\infty} } )\frac{ \|\nabla\bar{\rho}\|_{L^\infty} }{ \inf \bar{\rho} } + \frac{d}{ \underline{\sigma}^2 } \| \div K \|^2_{ \dot{W}^{-1,\infty} } \,.
\end{equation*}
Hence
\begin{equation}\label{HN-phi}
	\begin{aligned}
		H_N(t)
		\leq & H_N(0) + CM_K ( H_N(t) + \frac{1}{N} ) + \int^t_0 \int_{\mathbb{T}^{dN}}\rho_N \frac{1}{N^3} \sum\limits^N_{ i,j,k=1 } \phi_2 (x_i,x_j,x_k) \,.
	\end{aligned}
\end{equation}
The lemma reads as follows
\begin{lemma}\label{L4-JW18}\cite[Lemma 4]{JW2018}
	Assume that $ \bar{\rho} \in W^{1,p} $ for any $ p < \infty $, then for any kernel $K \in L^\infty ( \mathbb{T}^d ) $ with $\div K \in \dot{W}^{-1,\infty}$, one has that
	\begin{equation*}
			\begin{aligned}
				- & \frac{1}{N^2} \sum^N_{ i,j }\int_{ \mathbb{T}^{dN}}  \rho_N \big( K(x_i - x_k) - K*\bar{\rho}(x_i) \big) \cdot \nabla_{x_i} \log \bar{\rho}_N \d X \\
				- & \frac{1}{N^2} \sum^N_{i,j} \int_{\mathbb{T}^{dN}} \rho_N \big( \nabla_{x_i} \cdot K (x_i-x_k) - \nabla_{x_i} \cdot K*\bar{\rho}(x_i) \big) \d X \\
				\leq & \frac{ \underline{\sigma} }{4N} \sum\limits^N_{ i=1 } \int_{\mathbb{T}^{dN}} \rho_N \left| \nabla_{x_i} \log \frac{\rho_N}{ \bar{\rho}_N }  \right|^2 
					d X + CM_K ( H_N( \rho_N | \bar{\rho}_N) + \frac{1}{N} ) \,,
			\end{aligned}
		\end{equation*}
	where $C$ is a universal constant and 
		\begin{equation*}
			M_K = ( \|K\|_{L^\infty} + \| \div K \|_{\dot{W}^{-1,\infty} } )\frac{ \| \nabla \bar{\rho} \|_{L^\infty} }{ \inf \bar{\rho} } + \frac{d}{ \underline{\sigma} } \| \div K\|^2_{ \dot{W}^{-1,\infty} } \,.
		\end{equation*}
\end{lemma}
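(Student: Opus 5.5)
This is Lemma 4 of Jabin--Wang with the constant diffusion replaced by the lower bound $\underline{\sigma}^2$ on the weighted Fisher information. I would reproduce their argument, treating the divergence term and the transport term separately, each by a change of reference measure followed by an exponential law of large numbers.

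\textbf{Divergence term.} Write $\div K=\div g$ with $\|g\|_{L^\infty}\le 2\|\div K\|_{\dot W^{-1,\infty}}$. Then $\div K(x_i-x_k)-\div K*\bar\rho(x_i)=\nabla_{x_i}\cdot\big(g(x_i-x_k)-g*\bar\rho(x_i)\big)$ for $k\neq i$; the diagonal $k=i$ contributes only $O(1/N)$ after averaging. Integrate by parts in $x_i$ against $\rho_N$, and write $\nabla_{x_i}\rho_N=\rho_N\nabla_{x_i}\log\frac{\rho_N}{\bar\rho_N}+\rho_N\nabla_{x_i}\log\bar\rho_N$. This produces two contributions.
\begin{itemize}
\item The first is $\frac1{N}\sum_i\int\rho_N\, G_i\cdot\nabla_{x_i}\log\frac{\rho_N}{\bar\rho_N}$, where $G_i=\frac1N\sum_k\big(g(x_i-x_k)-g*\bar\rho(x_i)\big)$. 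By Young's inequality it is at most $\frac{\underline\sigma^2}{8N}\sum_i\int\rho_N|\nabla_{x_i}\log\frac{\rho_N}{\bar\rho_N}|^2+\frac{2}{\underline\sigma^2N}\sum_i\int\rho_N|G_i|^2$.
\item The second is $\frac1N\sum_i\int\rho_N\,G_i\cdot\nabla\log\bar\rho(x_i)$, which has exactly the same structure as the transport term.
\end{itemize}

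\textbf{Transport term.} It equals $\frac1N\int\rho_N\frac1N\sum_{i,k}\psi(x_i,x_k)$ with $\psi(x,z)=-\big(K(x-z)-K*\bar\rho(x)\big)\cdot\nabla\log\bar\rho(x)$. The kernel $\psi$ is bounded by $2\|K\|_{L^\infty}\|\nabla\bar\rho\|_{L^\infty}/\inf\bar\rho$ and satisfies the cancellation $\int\psi(x,z)\bar\rho(z)\,dz=0$. The same holds for the term produced from $g$ in the divergence step.

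\textbf{Change of measure.} For every such term, use the Gibbs/Donsker--Varadhan inequality: for $\eta>0$,
\[
\int\rho_N F\le \frac1\eta\Big(H_N(\rho_N|\bar\rho_N)+\frac1N\log\int\bar\rho_N e^{N\eta F}\Big).
\]
Apply it to $F=\frac1{N^2}\sum_{i,k}\psi(x_i,x_k)$ with $\eta$ proportional to $1/\|\psi\|_{L^\infty}$. Jabin--Wang Theorem 4, the large deviation estimate for cancelling kernels, gives $\int\bar\rho_N\exp\big(\frac{\gamma}{N}\sum_{i,k}\psi\big)\le C$ whenever $\gamma\|\psi\|_{L^\infty}$ is small. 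Hence this contribution is bounded by $C\|\psi\|_{L^\infty}(H_N+\frac1N)$.

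For the quadratic term $\frac1N\sum_i|G_i|^2$, take $F=\frac{1}{N}\sum_i|G_i|^2$ with $\eta\sim 1/\|g\|_{L^\infty}^2$. Expanding $|G_i|^2$ gives a trilinear average. Its control is exactly Jabin--Wang Theorem 3, the exponential law of large numbers $\int\bar\rho_N\exp\big(\frac{\gamma}{N}\sum_i|\sum_k\phi(x_i,x_k)|^2/N\big)\le C$ for bounded, mean-zero $\phi$ and $\gamma\|\phi\|_\infty^2$ small. This yields a bound $C\frac{\|\div K\|^2_{\dot W^{-1,\infty}}}{\underline\sigma^2}(H_N+\frac1N)$. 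Carrying the factor $d$ from the componentwise Young's inequality produces the $\frac{d}{\underline\sigma^2}$ term in $M_K$.

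Summing the pieces gives the claimed bound, with the Fisher-information coefficient at most $\frac{\underline\sigma^2}{4N}$ and constant $CM_K$. The only real inputs are Theorems 3 and 4 of Jabin--Wang, so there is no genuine obstacle. The step needing care is the scaling: check that the $\eta$ in the Gibbs inequality matches the smallness thresholds of those theorems, so that the constants come out universal and the dependence on $\underline\sigma$ is exactly as stated. The diagonal terms $i=k$ are absorbed in the $1/N$.
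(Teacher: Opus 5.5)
Your overall architecture is the right one, and it matches the argument behind \cite[Lemma 4]{JW2018}; the paper itself only cites that lemma. You write $\div K=\div g$, integrate the divergence term by parts, absorb the cross term with $\nabla_{x_i}\log\frac{\rho_N}{\bar\rho_N}$ by Young's inequality and Theorem~3 of \cite{JW2018}, and treat the remaining bounded two-body terms by Lemma~\ref{Lem-FN-bar-FN} and Theorem~4 of \cite{JW2018}. The gap is in this last step. Theorem~4 of \cite{JW2018}, like Theorem~\ref{Thm-LDE} here, requires \emph{both} cancellations: $\int\psi(x,z)\bar\rho(z)\,dz=0$ for all $x$, and $\int\psi(x,z)\bar\rho(x)\,dx=0$ for all $z$. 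You apply it separately to
\[
\psi_K(x,z)=-\big(K(x-z)-K*\bar\rho(x)\big)\cdot\nabla\log\bar\rho(x)\quad\text{and}\quad \psi_g(x,z)=\big(g(x-z)-g*\bar\rho(x)\big)\cdot\nabla\log\bar\rho(x),
\]
and each of these has only the first cancellation. Integrating by parts in $x$,
\[
\int\psi_K(x,z)\bar\rho(x)\,dx=\int\big(\div K(x-z)-\div K*\bar\rho(x)\big)\bar\rho(x)\,dx=-\int\psi_g(x,z)\bar\rho(x)\,dx .
\]
When $\div K\neq0$ this is in general a nonconstant function $\bar\psi(z)$. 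The consequence is fatal, not cosmetic. The sum $\frac1N\sum_{i,k}\psi(x_i,x_k)$ then contains $\sum_k\bar\psi(x_k)$, which under $\bar\rho_N$ fluctuates on scale $\sqrt N$. Hence $\frac1N\log\int\bar\rho_N\exp\big(\frac{\eta}{N}\sum_{i,k}\psi\big)$ is of order $\eta^2$ rather than $O(1/N)$. The change of measure then only gives $\frac1\eta H_N+O(\eta)$, an error that does not vanish as $N\to\infty$, so Gronwall yields no propagation of chaos.

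The missing idea is that the transport term and the $g$-part of the divergence term must be kept together. By the identity above, $\phi=\psi_K+\psi_g$ satisfies both cancellations. It is bounded by $(\|K\|_{L^\infty}+\|g\|_{L^\infty})\|\nabla\bar\rho\|_{L^\infty}/\inf\bar\rho$. A single application of Lemma~\ref{Lem-FN-bar-FN} and Theorem~4 of \cite{JW2018} to $\phi$ therefore gives the part of $M_K$ that is linear in $\|K\|_{L^\infty}+\|\div K\|_{\dot W^{-1,\infty}}$. This is the same mechanism behind the paper's remark that $\phi_1$ enjoys two cancellations: only the sum of its divergence and transport parts does. The rest of your proposal is fine. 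That includes Young's inequality plus Theorem~3 for $\frac1N\sum_i|G_i|^2$, which gives the $\frac{d}{\underline\sigma^2}\|\div K\|^2_{\dot W^{-1,\infty}}$ term, and the $O(1/N)$ diagonal terms.
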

\begin{remark}
	The $\underline{\sigma}^2$ in our result differs from the $\underline{\sigma}$ in Lemma \ref{L4-JW18} because the diffusion coefficient in \cite{JW2018} is defined as the square root of the corresponding quantity.
\end{remark}
It now remains only to handle the term involving $\phi_2$.

\noindent\textbf{\emph{Step 3: Reexpress \eqref{HN-phi} in terms of $\bar{\rho}_N$.}} As noted in \cite{JW2018}, information about  $\rho_N$ is limited. We therefore translate the calculation in \eqref{HN-phi} involving $\rho_N$ into one for $\bar{\rho}_N$ via 
\begin{lemma}\label{Lem-FN-bar-FN}
	\cite[Lemma 1]{JW2018}
	For any two parbability densities $\rho_N$, $\bar{\rho}_N$ on $\mathbb{T}^{dN}$ and $\Phi\in L^\infty ( \mathbb{T}^{dN}  )$, the following holds for all $\eta>0$
	\begin{equation*}
		\int_{\mathbb{T}^{dN}} \Phi \rho_N \d X\leq \frac{1}{\eta}\Big(H_N(\rho_N|\bar{\rho}_N) +\frac{1}{N}\log \int_{\mathbb{T}^{dN}} \bar{\rho}_Ne^{N\eta\Phi} \d X\Big)\,.
	\end{equation*}
\end{lemma}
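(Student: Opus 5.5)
This is the Donsker--Varadhan (Gibbs) variational inequality for relative entropy, rescaled by $N$. I would derive it from Jensen's inequality, or equivalently from the elementary Young-type inequality $ab \le a\log a - a + e^{b}$.

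The constant function $\Psi := N\eta\Phi$ lies in $L^\infty$, so the normalizing constant
\[
Z := \int_{\mathbb{T}^{dN}} \bar\rho_N e^{\Psi}\,\d X
\]
is finite and positive. Define the tilted probability density $\mu := \bar\rho_N e^{\Psi}/Z$. Nonnegativity of relative entropy, $\int \rho_N \log(\rho_N/\mu)\,\d X \ge 0$, then gives
\[
0 \le \int \rho_N \log\frac{\rho_N}{\bar\rho_N}\,\d X - \int \rho_N \Psi\,\d X + \log Z .
\]

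Nonnegativity itself comes from Jensen's inequality applied to the convex function $s\mapsto s\log s$ under the measure $\mu\,\d X$, evaluated at $s=\rho_N/\mu$. Rearranging the display yields
\[
N\eta\int \Phi\rho_N\,\d X \le N H_N(\rho_N|\bar\rho_N) + \log\int \bar\rho_N e^{N\eta\Phi}\,\d X .
\]
Dividing by $N\eta>0$ gives exactly the claimed bound.

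Only minor technical points need care.

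If $H_N(\rho_N|\bar\rho_N)=+\infty$, the inequality is trivial, so we may assume the entropy is finite.

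On the set where $\bar\rho_N=0$ we must have $\rho_N=0$ almost everywhere, by finiteness of the entropy. In our application this is automatic anyway, since $\inf\bar\rho>0$.

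The integrand $\rho_N\log(\rho_N/\mu)$ equals $\rho_N\log(\rho_N/\bar\rho_N) - \rho_N\Psi + \rho_N\log Z$. The last two terms are integrable because $\Phi$ is bounded and $\rho_N$ is a probability density, so splitting the integral is legitimate.

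I expect no real obstacle; the only care needed is this integrability bookkeeping. As an alternative to Jensen, one can apply the pointwise inequality $ab\le a\log a - a + e^b$ with $a=\rho_N/\bar\rho_N$ and $b=\Psi-\log Z$, then integrate against $\bar\rho_N$. Integrating turns the right-hand side into $\int \rho_N\log(\rho_N/\bar\rho_N) - 1 + 1$, which gives the same estimate directly.
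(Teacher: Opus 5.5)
Your argument is correct and is essentially the standard proof of \cite[Lemma 1]{JW2018}, which the paper cites without reproducing: normalize $\bar\rho_N e^{N\eta\Phi}$ into a probability density, use nonnegativity (via Jensen) of the relative entropy of $\rho_N$ with respect to it, and then divide by $N\eta$. The only slip is one of wording: $\Psi=N\eta\Phi$ is a bounded function rather than a constant one, and its boundedness is exactly what gives $e^{-\|\Psi\|_{L^\infty}}\le Z\le e^{\|\Psi\|_{L^\infty}}$.
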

\noindent It is easy to verify that $\phi_2\in L^\infty$. Applying Lemma \ref{Lem-FN-bar-FN} to \eqref{HN-phi}, we obtain 
\begin{equation}\label{HN-eta}
	\frac{ \d H_N(t) }{\d t} \leq  C M_K ( H_N(t) + \frac{1}{N}) + \frac{1}{\eta} H_N(t) + \frac{1}{N\eta} \log \Psi \,,
\end{equation}
with
\begin{equation*}
	\begin{aligned}
		\Psi = \int_{\mathbb{T}^{dN}} \bar{\rho}_N \mathrm{exp} \Big( \frac{1}{N^2} \sum^N_{ i,j,k=1 } \eta \phi_2(x_i,x_j,x_k) \Big) \d X\,.
	\end{aligned}
\end{equation*}

\noindent\textbf{\emph{Step 4: Bounding  $\Psi$.}} 
This requires us to establish a new exponential law of large number similar to Theorem 4 in \cite{JW2018}, as follows.

\begin{theorem}\label{Thm-LDE}
	Let $\bar{\rho} \in L^1( \mathbb{T}^d )$  satisfy $\bar{\rho} \geq 0$ and $\int \bar{\rho} \d x = 1 $. For any $\phi (x,z,z') \in L^\infty$ with 
	$$ \sup_{ p \geq 1 } \frac{ \|\sup_{z,z'} | \phi (\cdot,z,z') | \|_{ L^p(\bar{\rho} \d x) } }{p} < \frac{1}{6e^2} \,.$$	
 Assume that $\phi$ satisfies the cancellations \begin{equation}\label{Cancel-rules}
		\int_{\mathbb{T}^d}\phi (x,z,z') \bar{\rho}(x)\d x = 0 \ \forall z,z' \textrm{ and } \int_{\mathbb{T}^{2d}} \phi(x,z,z') \bar{\rho}(z) \bar{\rho}(z') \d z \d z' = 0 \ \forall x \,.
			\end{equation}
			 Then 
	\begin{equation*}
		\int_{\mathbb{T}^{dN} } \bar{\rho}_N \exp \Big( \frac{1}{N^2} \sum^N_{ i,j,k=1 } \phi(x_i,x_j,x_k)  \Big) \d X \leq C = 2 \Big( 1 + \frac{ 4 \alpha }{ ( 1 - \alpha )^3 } + \frac{1}{ 1 - \beta } \Big) < \infty \,,
	\end{equation*}
where 
\begin{equation}\label{alpha-beta}
	\begin{aligned}
		\alpha = & \Big( \sqrt{32e^3} \sup_{ p \geq 1 } \frac{ \| \sup_{z,z'} | \phi (\cdot,z,z') | \|_{ L^p( \bar{\rho} \d x) } }{p} \Big)^2 < 1 \,,\\
		\beta = & \Big( 3e^2 \sup_{ p \geq 1 } \frac{ \| \sup_{z,z'} | \phi (\cdot,z,z') | \|_{ L^p(\bar{\rho}\d x) } }{p} \Big)^2 < 1 \,.
	\end{aligned}
\end{equation}
\end{theorem}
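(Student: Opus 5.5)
We adapt the proof of \cite[Theorem 4]{JW2018} to the three-index kernel $\phi(x_i,x_j,x_k)$. We expand the exponential as a power series,
\begin{equation*}
\int \bar\rho_N \exp\Big(\frac{1}{N^2}\sum_{i,j,k}\phi\Big)\d X=\sum_{m\ge 0}\frac{1}{m!}\,\frac{1}{N^{2m}}\sum_{i_1,j_1,k_1,\dots,i_m,j_m,k_m}\int \bar\rho_N\prod_{l=1}^m \phi(x_{i_l},x_{j_l},x_{k_l})\d X .
\end{equation*}
We then split the sum over $m$ into two regimes, mirroring the two terms $\frac{4\alpha}{(1-\alpha)^3}$ and $\frac{1}{1-\beta}$ in the constant:
\begin{itemize}
\item small $m$ (say $m\le N/c$ for a fixed constant $c$), where cancellations must be exploited;
\item large $m$, where a crude bound suffices.
\end{itemize}
The factor $2$ in front will come from symmetrizing $\exp$ into $\cosh$-type even and odd parts, or from a Cauchy--Schwarz step, as in \cite{JW2018}.

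\textbf{Large $m$.} We bound $|\phi(x_i,x_j,x_k)|\le \Phi(x_i):=\sup_{z,z'}|\phi(x_i,z,z')|$, which is a function of $x_i$ alone. The term of order $m$ is then at most
\begin{equation*}
\frac{1}{m!}\int\bar\rho_N\Big(\frac{1}{N}\sum_i\Phi(x_i)\Big)^m .
\end{equation*}
Expanding this by multinomials and using Hölder together with $\|\Phi\|_{L^p(\bar\rho)}\le p\,\Lambda$, where $\Lambda:=\sup_p \|\Phi\|_{L^p(\bar\rho)}/p$, each product $\prod_i \mathbb{E}\,\Phi^{a_i}$ is at most $\prod_i (a_i\Lambda)^{a_i}$. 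Stirling's formula then gives a bound of order $(e^2\Lambda)^m$ times combinatorial factors. For $m\gtrsim N$ this yields a geometric series with ratio $\beta^{1/2}$-type (the $3e^2\Lambda$), which is summable by the hypothesis $\Lambda<1/(6e^2)$. This regime is routine.

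\textbf{Small $m$: cancellation structure.} For a fixed multi-index $(i_l,j_l,k_l)_{l\le m}$, the integral against $\bar\rho_N$ vanishes in two situations:
\begin{itemize}
\item some index $i_l$ appears exactly once among all $3m$ slots; integrating in $x_{i_l}$ first kills the term by the first cancellation in \eqref{Cancel-rules};
\item for some $l$, both $j_l$ and $k_l$ appear nowhere else, and $j_l\ne k_l$, $j_l,k_l\neq i_l$; integrating in $(x_{j_l},x_{k_l})$ kills the term by the second cancellation.
\end{itemize}
When $j_l=k_l$ we lose the second cancellation, but this diagonal case contributes a factor $1/N$ and is absorbed.

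Consequently, every surviving multi-index involves at most roughly $\frac{3m}{2}$ distinct indices, and those tied to the $j,k$ slots must be "reused". We then count surviving configurations. Let $q$ be the number of distinct particle indices used. We need
\begin{equation*}
\#\{\text{configurations}\}\lesssim N^{q}\,C^m m^{3m-q}
\end{equation*}
with $q\le 2m$ effectively, so that after dividing by $N^{2m}$ and $m!$ and bounding each integral by $\prod(a\Lambda)^a$, the sum over $m$ is bounded by $\sum_m m^2\alpha^{m}\sim\frac{4\alpha}{(1-\alpha)^3}$.

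\textbf{Main obstacle.} The hard step is the counting. Unlike $\phi_1(x_i,x_k)$, where each factor uses two indices and the two cancellation rules directly force each index to appear at least twice, here each factor carries a pair $(j_l,k_l)$. The second cancellation only requires that \emph{one} of $j_l,k_l$ be repeated, so a naive count allows $q$ up to $\sim 2.5m$, which beats $N^{2m}$.

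To fix this, we plan a \emph{dynamic} counting scheme. We process the factors $l=1,\dots,m$ sequentially, and for each new factor record which of its three slots are "new" indices versus indices already seen. Simultaneously, each new $i$-index must be matched later by a repetition, as enforced by the first cancellation. Using that matching, we pair every new index appearing in a $j/k$ slot with either a new $i$-slot repetition or another slot. The goal is to show that each factor introduces on average at most two new indices. The bookkeeping must handle indices first appearing in a $j$-slot and later as some $i$; those later appearances must be credited correctly so the count is not doubled.

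If this direct charging argument fails, we will fall back on a weaker route: split $\phi$ by Hoeffding-type projections into a part with zero mean in each of $z,z'$ separately plus a part depending on $x$ alone. Each piece then satisfies rules of the \cite{JW2018} type, and Theorem 4 there can be applied piecewise, at the cost of worse constants.
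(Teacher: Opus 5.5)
\textbf{There is a genuine gap: the small-$m$ counting, which carries the theorem, is not carried out.} Your overall architecture matches the paper's. You expand the exponential, keeping only even powers (this is where the factor $2$ comes from). You use the crude bound $|\phi(x_i,x_j,x_k)|\le\Phi(x_i)$ when $m\gtrsim N$. For $m\lesssim N$ you discard configurations by exactly the two vanishing criteria of Lemma~\ref{Lem-Condition-IJK}. But the decisive step is a bound of the form $\mathrm{poly}(m)\,(C\Lambda)^m$ on the $m$-th term for $m\le N/c$, and you do not prove it. You state a goal (``on average at most two new indices per factor'') and a fallback, so as written the proposal does not establish the statement.

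There is also a normalization slip in the large-$m$ step. After replacing each factor by $\Phi(x_{i_l})$, the $N^{2m}$ free choices of the pairs $(j_l,k_l)$ cancel the prefactor $N^{-2m}$ exactly. So the $m$-th term is bounded by $\frac{1}{m!}\int\bar\rho_N\big(\sum_i\Phi(x_i)\big)^m$, not by $\frac{1}{m!}\int\bar\rho_N\big(\frac1N\sum_i\Phi(x_i)\big)^m$. Your version would make the whole series bounded without any cancellation. That is false already for $\phi\equiv c$, where the $m$-th term is $(Nc)^m/m!$. It is the unnormalized sum that produces $\binom{m+N-1}{N-1}$, which is $\le(3e)^m$ only when $m\gtrsim N$.

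Your diagnosis of the obstacle is also off, and this affects how the count must be organized. Classify each distinct index by where it occurs: type A ($\ge2$ $i$-slots), type B (one $i$-slot and at least one $(j,k)$-slot), type C (no $i$-slot, $\ge2$ $(j,k)$-slots), type D (one $(j,k)$-slot, nothing else). The first cancellation rules out every other case, and the second allows at most one type-D index per factor. With $2\#A+\#B\le m$ and $\#B+2\#C+\#D\le 2m$ one gets $q\le\frac{3m}{2}+\frac{\#D}{2}\le 2m$ at once. Equality holds e.g.\ when the $i$'s are paired, the $j$'s are paired and all $k$'s are fresh; these configurations survive, e.g.\ for $\phi=g(x)h(z)$ with $g,h$ centered. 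So neither your $\frac{3m}{2}$ nor the ``naive $2.5m$'' is right, and the power of $N$ needs no charging argument.

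A truly sequential rule (``each pair $(j_l,k_l)$ brings at most one index not seen before'') is not implied either, since the second cancellation refers to later slots too: $j_1=j_2$ fresh with $k_1\neq k_2$ fresh survives. Substep~3.1 of the paper's proof of Proposition~\ref{Pro-Small-m} asserts such a rule, so a global charging as you intend is the safe route.

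The real work is the combinatorial prefactor. The weights $\prod_v(a_v\Lambda)^{a_v}$ are not uniformly $(C\Lambda)^m$ (all $i_l$ equal gives $(m\Lambda)^m$), so an unweighted target $\#\lesssim N^qC^mm^{3m-q}$ does not suffice. As in the paper, fix the multiplicity vector $(a_t)$ of $I$, so that the $m!/\prod_ta_t!$ orderings times $e^m\prod_ta_t!\,\Lambda^m$ cancel the $1/m!$. Then count $(J,K)$ given the index set of $I$, including $i$-values of multiplicity one that reappear in a $(j,k)$-slot. Finally use $N\ge cm$ to show that configurations with $q<2m$ lose enough powers of $N$ to pay for being more numerous.

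The fallback is mis-specified. ``Centered in $z$ and in $z'$ plus a function of $x$ alone'' omits the projections $a(x,z)=\int\phi(x,z,z')\bar\rho(z')\d z'$ and $b(x,z')=\int\phi(x,z,z')\bar\rho(z)\d z$; the $x$-only part is zero by the second cancellation. These projections inherit both cancellations and fall under \cite[Theorem 4]{JW2018}, because $\frac{1}{N^2}\sum_{i,j,k}a(x_i,x_j)=\frac1N\sum_{i,j}a(x_i,x_j)$. The remainder $\phi-a-b$, however, is a genuinely three-variable kernel outside that theorem. It needs its own moment count (easier, since every index must then occur at least twice). After H\"older, the constants and smallness threshold would also not be those of the statement.
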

\noindent The proof of Theorem \ref{Thm-LDE} is given in Section \ref{sec-Thm-LDE}. we now define $ \tilde{\phi}_2 = \eta \phi_2$ and note that 
\begin{equation*}
	\begin{aligned}
		|\phi_2 | \leq & 8d \| \sigma \|^2_{ W^{2,\infty} } + 8 \| \sigma \|^2_{ W^{2,\infty} } \sum^d_{ \alpha=1 } \frac{ |\partial_{ x^\alpha_i } \bar{\rho}(x_i) | }{ \bar{\rho}(x_i) } + 2 \| \sigma \|^2_{ W^{2,\infty} } \sum^d_{ \alpha=1 } \frac{ |\partial^2_{ x^\alpha_i } \bar{\rho}(x_i) | }{ \bar{\rho}(x_i) } \\
		\leq &  8d \| \sigma \|^2_{ W^{2,\infty} } + 8d \|\sigma\|^2_{ W^{2,\infty} } \frac{ \| \nabla\bar{\rho} \|_{L^\infty} }{ \inf \bar{\rho} } + 2d \|\sigma\|^2_{ W^{2,\infty} } \frac{ \| \nabla^2 \bar{\rho} \|_{L^\infty} }{ \bar{\inf \bar\rho} } \,.
	\end{aligned}
\end{equation*}
Hence
\begin{equation*}
	\begin{aligned}
		\| \sup_{z,z'} | \phi_2(\cdot,z,z') |\|_{ L^p( \bar{\rho} \d x) }
		\leq & 8d \|\sigma\|^2_{ W^{2,\infty} } + 8d \|\sigma\|^2_{ W^{2,\infty} } \frac{ \| \nabla \bar{\rho} \|_{L^\infty} }{ \inf \bar{\rho} } + 2d \|\sigma\|^2_{ W^{2,\infty} } \frac{ \| \nabla^2 \bar{\rho} \|_{L^\infty} }{ \bar{\inf \bar\rho} } \,.
	\end{aligned}
\end{equation*}
If we choose
$$\eta = \frac{1}{ 12 e^2 \Big( 8d \|\sigma\|^2_{ W^{2,\infty} } + 8d \|\sigma\|^2_{ W^{2,\infty} } \frac{ \| \nabla \bar{\rho} \|_{L^\infty} }{ \inf \bar{\rho} } + 2d \|\sigma\|^2_{ W^{2,\infty} } \frac{ \| \nabla^2 \bar{\rho} \|_{L^\infty} }{ \bar{ \inf\bar\rho } } \Big) } \,,$$
then $\sup_{ p \geq 1 }\frac{ \|\sup_{z,z'} | \tilde{\phi}_2 (\cdot,z,z') | \|_{ L^p( \bar{\rho} \d x) } }{p} < \frac{1}{ 6e^2 } $. Recall that
\begin{equation*}
	\begin{aligned}
		\phi_2(x_i,x_j,x_k) = & \sum^d_{ \alpha=1 } \partial^2_{ x^\alpha_i } \big(  \sigma_{\alpha\alpha}( x_i - x_j ) \sigma_{\alpha\alpha}( x_i - x_k ) - ( \sigma_{\alpha\alpha}*\bar{\rho}(x_i) )^2  \big) \\
		& + 2 \sum^d_{ \alpha=1 } \partial_{ x^\alpha_i } \big(  \sigma_{\alpha\alpha}( x_i - x_j ) \sigma_{\alpha\alpha}( x_i - x_k ) - ( \sigma_{\alpha\alpha}*\bar{\rho}(x_i) )^2 \big) \partial_{ x^\alpha_i } \log \bar{\rho}(x_i) \\
		& + \sum^d_{ \alpha=1 } \big( \sigma_{\alpha\alpha}( x_i - x_j ) \sigma_{\alpha\alpha}( x_i - x_k ) - ( \sigma_{\alpha\alpha}*\bar{\rho}(x_i) )^2 \big) \frac{ \partial^2_{ x^\alpha_i } \bar{\rho}(x_i) }{ \bar{\rho}(x_i) }\,.
	\end{aligned}
\end{equation*}
Direct computation shows that
\begin{equation*}
		\int_{\mathbb{T}^{2d}} \tilde{\phi}_2 \bar{\rho}(z)\bar{\rho}(z') \d z \d z' = \eta \int_{\mathbb{T}^{2d}} \phi_2(x,z,z') \bar{\rho}(z) \bar{\rho}(z') \d z \d z' = 0 \,,
\end{equation*}
while integration by parts gives 
\begin{equation*}
	\int_{\mathbb{T}^d} \tilde{\phi}_2 \bar{\rho}(x) \d x = \eta \int_{\mathbb{T}^d} \phi_2(x,z,z') \bar{\rho}(x) \d x = 0 \,.
\end{equation*}
Hence Theorem \ref{Thm-LDE} gives that
\begin{equation}\label{Bound-Phi}
	\begin{aligned}
		\Psi = \int_{\mathbb{T}^{dN}} \bar{\rho}_N \exp \Big( { \frac{1}{N} \sum_{ i,j,k=1 } \eta \phi_2(x,z,z') } \Big) \d X \leq C = 2 \Big( 1 + \frac{ 4\alpha }{ ( 1 - \alpha )^3 } + \frac{1}{ 1 - \beta } \Big) < \infty \,.
	\end{aligned}
\end{equation}

\noindent\textbf{\emph{Final step: End of the proof.}} From \eqref{HN-eta}-\eqref{Bound-Phi}, it follows that
\begin{equation*}
	\begin{aligned}
	\frac{ \d H_N(t) }{ \d t } \leq & C M_K ( H_N(t) + \frac{1}{N} ) + \frac{1}{\eta} H_N(t) + \frac{1}{N\eta} \log \Psi \\
	\leq & C \Big( M_K + \frac{1}{\eta} \Big) \Big( H_N(t) + \frac{1}{N} \Big) \leq C M \Big( H_N(t) + \frac{1}{N} \Big)\,,
	\end{aligned}
\end{equation*}
where
\begin{equation*}
	\begin{aligned}
		M = & ( \|K\|_{L^\infty} + \| \div K \|_{ \dot{W}^{-1,\infty} } ) \frac{ \|\nabla \bar{\rho} \|_{L^\infty} }{ \inf \bar{\rho} } + \frac{d}{ \underline{\sigma}^2 } \| \div K\|^2_{ \dot{W}^{-1,\infty} } \\
		& + 12e^2 \Big( 8d \|\sigma\|^2_{ W^{2,\infty} } + 8d \|\sigma\|^2_{ W^{2,\infty} } \frac{ \|\nabla \bar{\rho} \|_{L^\infty} }{ \inf \bar{\rho} } + 2 d \|\sigma\|^2_{ W^{2,\infty} } \frac{ \|\nabla^2\bar{\rho} \|_{L^\infty} }{ \bar{ \inf \bar\rho } } \Big) \,.
	\end{aligned}
\end{equation*}
Applying Gronwall lemma yields
\begin{equation*}
	H_N(t) \leq e^{CMt} \Big( H_N(0) + \frac{1}{N} \Big) \,,
\end{equation*}
which completes the proof of Theorem \ref{Thm-Rel-Ent-Est}.
\section{ Proof of Theorem \ref{Thm-LDE} }\label{sec-Thm-LDE} 
This section provides the proof of the exponential law of large number needed for Theorem \ref{Thm-Rel-Ent-Est}, i.e. the proof of Theorem \ref{Thm-LDE}. More precisely, we will prove that
\begin{equation*}
	\int_{ \mathbb{T}^{dN} } \bar{\rho}_N \exp \Big( \frac{1}{N^2} \sum^N_{ i,j,k=1 } \phi_2(x_i,x_j,x_k) \Big)\d X \leq C < \infty \,.
\end{equation*}
Using the series expansion 
$e^x \leq e^x + e^{-x} = 2 \sum^\infty_{ m=0 } \frac{ x^{2m} }{ (2m)! }$, we deduce that
\begin{equation}\label{Goal-integral}
	\begin{aligned}
		&\int_{ \mathbb{T}^{dN} } \bar{\rho}_N \exp \Big( { \frac{1}{N^2} \sum_{ i,j,k=1 } \phi_2(x_i,x_j,x_k) } \Big) \leq \sum^\infty_{ m=0 } \frac{2}{ (2m)! } \int_{ \mathbb{T}^{dN} } \bar{\rho}_N \Big(  \frac{1}{N^2} \sum_{ i,j,k=1 } \phi_2(x_i,x_j,x_k) \Big)^{2m}  \\
		& =\sum^\infty_{ m=0 } \frac{2}{ (2m)! } \frac{1}{N^{4m}} \sum^N_{ \substack{ i_1,j_1,k_1,\cdots,\\i_{2m},j_{2m},k_{2m}=1} } \int_{ \mathbb{T}^{dN} } \phi_2(x_{i_1},x_{j_1},x_{k_1}) \cdots \phi_2(x_{i_{2m}},x_{j_{2m}},x_{k_{2m}}) \bar{\rho}_N := \sum^\infty_{ m=0 } r_m \,.
	\end{aligned}
\end{equation}
Clearly, it suffices to prove the convergence of the series $ \sum^\infty_{ m=0 } r_m $. We split the series into two cases. The case $4m>N$ can be controlled directly by standard combinatorial results. The case $4\leq 4m \leq N$ requires a more delicate combinatorial analysis to reveal that the cancellation rule \eqref{Cancel-rules} implies
\begin{equation*}
\int_{ \mathbb{T}^{dN} } \phi_2(x_{i_1},x_{j_1},x_{k_1}) \cdots \phi_2(x_{i_{2m}},x_{j_{2m}},x_{k_{2m}}) \bar{\rho}_N \d X = 0
\end{equation*}
for many choices of multi-indices $(i_1,j_1,k_1,\cdots,i_{2m},j_{2m},k_{2m})$, thereby bringing this case under control as well. The results for both cases are as follows.
\begin{proposition}\label{Pro-Large-m}
	If $4m>N$, then
	\begin{equation*}
	    \begin{aligned}
     \frac{1}{ (2m)! } \int_{ \mathbb{T}^{dN} } \bar{\rho}_N  \Big(  \frac{1}{N^2} \sum^N_{i,j,k=1} \phi_2(x_i,x_j,x_k) \Big)^{2m} 
     \leq \Big(  3e^2 \sup_{ p \geq 1 } \frac{ \|\sup_{z,z'} | \phi_2(\cdot,z,z') | \|_{L^p( \bar{\rho} \d x) } }{p} \Big)^{2m} \,.
         \end{aligned}
	\end{equation*}
\end{proposition}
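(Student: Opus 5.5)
The plan is to follow the large-$m$ argument of \cite[Theorem 4]{JW2018}: no cancellation is used, and everything comes from the crude pointwise bound in $z,z'$ together with Stirling-type estimates. Set $\Lambda := \sup_{p\geq1} \|\sup_{z,z'}|\phi_2(\cdot,z,z')|\|_{L^p(\bar\rho\,\d x)}/p$ and $\psi(x):=\sup_{z,z'}|\phi_2(x,z,z')|$. Bounding each summand by its supremum over the last two variables gives
\begin{equation*}
\Big|\frac{1}{N^2}\sum_{i,j,k=1}^N \phi_2(x_i,x_j,x_k)\Big| \leq \frac{1}{N^2}\sum_{i,j,k}\psi(x_i) = \sum_{i=1}^N \psi(x_i).
\end{equation*}
By convexity of $t\mapsto t^{2m}$ (Jensen on the average over $i$), $\big(\sum_i \psi(x_i)\big)^{2m} = N^{2m}\big(\frac1N\sum_i\psi(x_i)\big)^{2m} \leq N^{2m}\frac1N\sum_i \psi(x_i)^{2m}$.

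Integrating against $\bar\rho_N=\bar\rho^{\otimes N}$, each term depends on a single variable. Hence
\begin{equation*}
\int \bar\rho_N \Big(\frac{1}{N^2}\sum\phi_2\Big)^{2m} \leq N^{2m}\|\psi\|_{L^{2m}(\bar\rho)}^{2m} \leq N^{2m}(2m\Lambda)^{2m}.
\end{equation*}

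It remains to divide by $(2m)!$ and use $4m>N$. Stirling gives $(2m)!\geq (2m/e)^{2m}$, so the bound becomes $(N e\Lambda)^{2m}$. Since $N<4m$, this is at most $(4me\Lambda)^{2m}$, which still grows in $m$. So this naive bound is too weak, and that is the main obstacle: the factor $N^{2m}$ has to be replaced by something like $m^{m}$-type growth.

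To fix it, I would expand $\big(\sum_{i=1}^N\psi(x_i)\big)^{2m}$ by the multinomial theorem rather than using Jensen. For a multi-index $a_1+\dots+a_N=2m$, independence gives
\begin{equation*}
\int\bar\rho_N\prod_i\psi(x_i)^{a_i} = \prod_i\|\psi\|_{L^{a_i}}^{a_i} \leq \prod_i (a_i\Lambda)^{a_i}.
\end{equation*}
Since $a_i^{a_i}\leq e^{a_i}a_i!$, each term is at most $(e\Lambda)^{2m}\prod_i a_i!$. Multiplying by the multinomial coefficient $\frac{(2m)!}{\prod_i a_i!}$ cancels the factorials and leaves $(2m)!\,(e\Lambda)^{2m}$ per multi-index. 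The number of multi-indices is $\binom{N+2m-1}{2m}\leq 2^{N+2m}\leq 2^{6m}$, using $N<4m$.

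Dividing by $(2m)!$ therefore gives $(e\Lambda)^{2m}2^{6m}=(8e\Lambda)^{2m}$. This has the right form, but the constant is $8e$ rather than the stated $3e^2$. To recover $3e^2$, I would count more carefully following JW: bound the number of multi-indices by $\binom{N+2m}{2m}\leq \big(e(N+2m)/2m\big)^{2m}\leq (3e)^{2m}$, again using $N<4m$. Combined with the factor $e^{2m}$ from $a^a\leq e^a a!$, this gives $(3e^2\Lambda)^{2m}$, exactly the claimed bound.
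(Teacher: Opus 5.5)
Your proposal is correct and follows the paper's proof essentially step by step. It makes the same reduction to $\frac{1}{(2m)!}\int_{\mathbb{T}^{dN}}\big(\sum_{i}\sup_{z,z'}|\phi_2(x_i,z,z')|\big)^{2m}\bar{\rho}_N\,\d X$, uses the same multinomial expansion with $a^a\le e^a a!$ cancelling the multinomial coefficients, and uses $4m>N$ in the same way to bound the number $\binom{2m+N-1}{N-1}$ of multi-indices by $(3e)^{2m}$. The one difference is that you get this count from $\binom{n}{k}\le (en/k)^k$ with $n=N+2m$, $k=2m$, which is cleaner than the paper's Stirling computation; the Jensen detour can simply be dropped.
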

\begin{proposition}\label{Pro-Small-m}
	If $ 4 \leq 4m \leq N$, then
	\begin{equation*}
	    \begin{aligned}
	        \frac{1}{ (2m)! } \int_{ \mathbb{T}^{dN} } \bar{\rho}_N \Big(  \frac{1}{N^2} \sum^N_{ i,j,k=1 } \phi_2(x_i,x_j,x_k) \Big)^{2m} \leq 2 m^2 \Big( \sqrt{32e^3} \sup_{ p \geq 1 } \frac{ \|\sup_{z,z'} | \phi_2(\cdot,z,z') | \|_{ L^p( \bar{\rho} \d x) } }{p} \Big)^{2m}\,.
  \end{aligned}
\end{equation*}
\end{proposition}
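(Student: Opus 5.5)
The plan is to follow the proof of \cite[Theorem 4]{JW2018} and to add a dynamic counting step that copes with the product structure of $\phi_2$. Write $\psi(x):=\sup_{z,z'}|\phi_2(x,z,z')|$ and $\Lambda:=\sup_{p\ge1}\|\psi\|_{L^p(\bar\rho\,\d x)}/p$. Expanding the $2m$-th power, as in \eqref{Goal-integral}, gives a sum over multi-indices $\mathbf{I}=(i_1,j_1,k_1,\dots,i_{2m},j_{2m},k_{2m})\in\{1,\dots,N\}^{6m}$. The proof then has three stages. First, I identify the multi-indices whose integral vanishes because of \eqref{Cancel-rules}. Second, I bound each remaining integral by Hölder's inequality. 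Third, I count the remaining multi-indices and resum, using $4m\le N$ to absorb the falling factorials of $N$.

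\textbf{Step 1 (vanishing terms).} Fix $\mathbf{I}$ and call a label $a$ \emph{isolated in factor $l$} if $a$ appears among the $6m$ entries of $\mathbf{I}$ only in the $l$-th triple, and exactly once there. There are two vanishing mechanisms.
\begin{itemize}
\item If $i_l$ is isolated, then integrating in $x_{i_l}$ against $\bar\rho$ kills the term by the first identity in \eqref{Cancel-rules}. This uses only the tensor structure of $\bar\rho_N$; the other factors do not depend on $x_{i_l}$.
\item If $j_l\neq k_l$ and both are isolated in factor $l$, then integrating in $(x_{j_l},x_{k_l})$ kills the term by the second identity.
\end{itemize}
The new difficulty compared with \cite{JW2018} is that no cancellation is available when only one of $j_l,k_l$ is isolated, or when $j_l=k_l$ is isolated. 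This is the nonlinearity $\sigma(x_i-x_j)\sigma(x_i-x_k)$. Consequently, a nonvanishing $\mathbf{I}$ must satisfy, in every factor $l$: $i_l$ is not isolated, and the pair $(j_l,k_l)$ is not a pair of distinct isolated labels.

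\textbf{Step 2 (Hölder bound).} For a surviving $\mathbf{I}$, let $n_a=\#\{l: i_l=a\}$. Bound every factor by $\psi(x_{i_l})$ and use independence under $\bar\rho_N$. This gives
$$\Big|\int\bar\rho_N\prod_l\phi_2(x_{i_l},x_{j_l},x_{k_l})\Big|\le\prod_{a}\|\psi\|_{L^{n_a}(\bar\rho)}^{n_a}\le\prod_a (n_a\Lambda)^{n_a}.$$
Here $\|\psi\|_{L^{n_a}}\le n_a\Lambda$ holds even when $n_a=1$. The bound depends only on the partition pattern of the $i$-labels, not on where the $j,k$ labels go.

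\textbf{Step 3 (dynamic counting, the main obstacle).} I read the factors $l=1,\dots,2m$ sequentially and record, for each factor, how many labels it introduces that have not appeared before. A factor may introduce up to three new labels, but the constraints of Step 1 force compensation:
\begin{itemize}
\item if $i_l$ is new, it must reappear in a later factor (as an $i$, $j$ or $k$), which occupies one slot there;
\item if $j_l,k_l$ are both new and distinct, at least one of them must likewise reappear later.
\end{itemize}
Assigning each such forced reappearance as a ``debt'' charged to the slot where it is repaid, I aim to show that the number $q$ of distinct labels satisfies $q\le 4m$, i.e.\ at most two free labels per factor on average. This matches the normalization $N^{-4m}$.

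For each admissible ``shape'' (which slots are new, which are repayments, and how they are linked), the label choices number at most $N^{q}\le N^{4m}$. The number of shapes is bounded by the number of ways to attach each repeated slot to an earlier or later slot, at most $(Cm)^{\#\text{repeats}}$, weighted against $\prod_a n_a^{n_a}$. Using $\prod_a n_a^{n_a}\le e^{2m}\prod_a n_a!$ and summing over compositions of the $i$-multiplicities, I expect a bound of the form $(2m)!\,(C\Lambda)^{2m}$ up to a polynomial factor. The polynomial factor comes from the free choice of the total number $q$ (at most $6m$ values) and the number of debts; this should produce the $2m^2$. Dividing by $(2m)!$ via Stirling, with constants tracked to reach $\sqrt{32e^3}$, then yields the claim.

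The main obstacle is Step 3: making the debt argument rigorous when one label repays several debts at once, or when $j_l=k_l$. The case $j_l=k_l$ costs a label without giving a cancellation, so it must be shown to lose a factor $N$ directly. I would first try a greedy rule: each repaid debt is charged to its first reappearance, and coincidences $j_l=k_l$ are treated as an immediate loss of one free label. I would verify the inequality $q\le4m$ by induction on $l$.
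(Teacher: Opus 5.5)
Your overall architecture (expansion, vanishing lemma, H\"older bound, counting) is the paper's, and your Steps 1 and 2 are sound: they match Lemma~\ref{Lem-Condition-IJK} and \eqref{Integral-term}. Your Step 1 is in fact more careful than the paper's Step 1. It correctly allows an $i$-label to occur only once in $I_{2m}$ when it reappears in a $j$/$k$-slot (e.g.\ $i_l=j_l$). It also allows one entry of a pair $(j_l,k_l)$ of two fresh labels to be repeated only in a later factor.

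The gap is Step 3. The proposition \emph{is} that counting, and you only describe what you expect; neither the prefactor $2m^2$ nor the constant $\sqrt{32e^3}$ is derived. In the paper these come from the sums over $1\le s\le m$ and $0\le r\le 2m$, together with explicit factors: $e^m$ from $\binom{N}{s}\le e^sN^ss^{-s}$, $e^{2m}$ from H\"older, and powers of $2$ from $\binom{2m}{r}2^r$, from $\tilde{\mathcal T}\le 2^mN^{4m}$ and from the number of compositions.

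The target of your debt argument, $q\le 4m$, is the easy part. A label occurring exactly once can only sit in a $j$/$k$-slot, at most one per factor. So if $u$ is the number of such labels, $u\le 2m$ and $2q-u\le 6m$. What must be controlled is the number of shapes with $q<4m$, weighted by the H\"older factor, against the gain $N^{q-4m}\le (4m)^{q-4m}$. Bounding the label choices by $N^q\le N^{4m}$ for every shape, as you write, throws that gain away.

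The shape count you sketch also does not close. If every repeated slot is charged $Cm$ and the shape is additionally weighted by $\prod_a n_a^{n_a}$, coincidences among $i$-slots are paid for twice. Take $i_1=\dots=i_{2m}$:
\begin{itemize}
\item the H\"older factor is $(2m\Lambda)^{2m}\approx e^{2m}(2m)!\,\Lambda^{2m}$;
\item the $2m-1$ attachments of $i$-slots cost another $(Cm)^{2m-1}$;
\item $q\le 3m+1$ (at most $2m$ singletons and $m$ further new labels in the $j$/$k$-slots), so there are at least $3m-1$ repeats and the gain is at most $N^{1-m}$.
\end{itemize}
At $N=4m$, after dividing by $(2m)!\,N^{4m}$, you are left with something of order $C^m m^{2m}\Lambda^{2m}$, which is not a geometric bound.

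The missing step has two parts. First, count the $i$-pattern exactly, as $\binom{N}{s}\frac{(2m)!}{a_1!\cdots a_s!}$ for multiplicities $a_1,\dots,a_s$. Then $\prod_t a_t!$ cancels the H\"older factor and $(2m)!$ cancels the Taylor coefficient. Second, prove that for fixed $I_{2m}$ with $s$ distinct labels, the admissible $(J_{2m},K_{2m})$ number at most $C^m s^s N^{4m-s}$ up to polynomial factors, using $N\ge 4m$ to pay for each coincidence beyond a maximal configuration.

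The factor $s^s$, which absorbs $\binom{N}{s}\le (eN/s)^s$, is really needed, and $s$ can now be as large as $2m$. For instance, let the $i_l$ be pairwise distinct, each reappearing in one $j$/$k$-slot, with singletons in the remaining slots. Then the $(J,K)$-count contains a factor $(2m)!$ from matching $i$-labels to slots.

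For the same reason you cannot simply import the paper's count. It relies on $s\le m$ and on a left-to-right rule that forbids repeats occurring only in later pairs. Under your (correct) admissibility conditions, the count and its constants have to be redone explicitly.
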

\noindent By Propositions \ref{Pro-Large-m} and \ref{Pro-Small-m}, we have that
\begin{equation*}
	\begin{aligned}
		\int_{ \mathbb{T}^{dN} } & \bar{\rho}_N \exp \Big( { \frac{1}{N^2} \sum^N_{ i,j,k=1 } \phi_2(x_i,x_j,x_k) } \Big) \d X \\
		\leq & 2 \sum^{ \left\lfloor \frac{N}{4} \right\rfloor}_{ m=0 } \frac{1}{ (2m)! } \int_{ \mathbb{T}^{dN} } \bar{\rho}_N \Big(  \frac{1}{N^2} \sum^N_{ i,j,k=1 } \phi_2(x_i,x_j,x_k)  \Big)^{2m} \d X\\
		& + 2 \sum^\infty_{ m = \left\lfloor \frac{N}{4} \right\rfloor + 1 } \frac{1}{ (2m)! } \int_{ \mathbb{T}^{dN} } \bar{\rho}_N \Big(  \frac{1}{N^2} \sum^N_{ i,j,k=1 } \phi_2(x_i,x_j,x_k) \Big)^{2m} \d X \\
		\leq & 2 + 4 \sum^{ \left\lfloor \frac{N}{4} \right\rfloor }_{ m=1 } m^2 \Big( \sqrt{32e^3} \sup_{ p \geq 1 } \frac{ \|\sup_{z,z'} | \phi_2(\cdot,z,z') | \|_{ L^p( \bar{\rho} \d x ) } }{p}   \Big)^{2m} \\
		& + 2 \sum^\infty_{ m = \left\lfloor \frac{N}{4} \right\rfloor + 1 } \Big(  3e^2 \sup_{ p \geq 1 } \frac{ \|\sup_{z,z'} | \phi_2(\cdot,z,z') | \|_{ L^p( \bar{\rho} \d x ) } }{p}   \Big)^{2m}\,.
	\end{aligned}
\end{equation*}
Recalling \eqref{alpha-beta}, we obtain 
\begin{equation*}
	\begin{aligned}
		\sum^{ \left\lfloor \frac{N}{4} \right\rfloor }_{ m=1 } & m^2 \Big(   \sqrt{32e^3} \sup_{ p \geq 1 } \frac{ \|\sup_{z,z'} | \phi_2(\cdot,z,z') | \|_{ L^p( \bar{\rho} \d x ) } }{p} \Big)^{2m}
		\leq \sum^\infty_{ m=1 } m(m+1)\alpha^m \\
		= & \alpha \sum^\infty_{ m=1 } m(m+1) \alpha^{m-1} = \alpha \frac{\d^2}{ \d \alpha^2 } \Big( \sum^\infty_{ m=0 } \alpha^m \Big) = \alpha \Big( \frac{1}{ 1 - \alpha } \Big)^{''} = \frac{ 2\alpha }{ ( 1 - \alpha )^3 } < \infty
	\end{aligned}
\end{equation*}
and 
\begin{equation*}
	\begin{aligned}
	\sum^\infty_{ m = \left\lfloor \frac{N}{4} \right\rfloor + 1 } \Big( 3e^2 \sup_{ p \geq 1 } \frac{ \| \sup_{z,z'} | \phi_2(\cdot,z,z') | \|_{L^p( \bar{\rho} \d x ) } }{p} \Big)^{2m}
	\leq \sum^\infty_{ m=0 } \beta^m = \frac{1}{ 1 - \beta } < \infty \,.
	\end{aligned}
\end{equation*}
Therefore
	\begin{equation*}
		\int_{ \mathbb{T}^{dN} } \bar{\rho}_N \exp \Big( { \frac{1}{N^2} \sum^N_{ i,j,k=1 } \phi_2(x_i,x_j,x_k) } \Big) \d X \leq 2 + \frac{ 8 \alpha }{ ( 1 - \alpha )^3 } + \frac{2}{ 1 - \beta } < \infty \,.
	\end{equation*}
This completes the proof of Theorem \ref{Thm-LDE}. 
\vspace{2mm}

We next introduce some notations and then give the proofs of Propositions \ref{Pro-Large-m} and \ref{Pro-Small-m}.

Let $ I_{2m} = (i_1,i_2,...,i_{2m}) $, $ J_{2m} = (j_1,j_2,...,j_{2m}) $, $ K_{2m} = (k_1,k_2,...,k_{2m}) $ be the $i$-, $j$- and $k$-indices, where $i_l,j_l,k_l \in \{ 1,2,3,...,N \}$ for $1 \leq l \leq 2m$. Let $ A_N = (a_1,...,a_N) $ be the multiplicities of $I_{2m}$, where 
$$a_t = | \{l: 1\leq l \leq 2m, i_l=t \} | \,, \ \ t=1,2,3,...,N \,.$$

\subsection{Proof of Proposition \ref{Pro-Large-m}}
Recalling \eqref{Goal-integral}, we know
\begin{equation*}
	\begin{aligned}
		\frac{1}{ (2m)! } & \int_{ \mathbb{T}^{dN} } \bar{\rho}_N \Big( \frac{1}{N^2} \sum^N_{ i,j,k=1 } \phi_2(x_i,x_j,x_k) \Big)^{2m} \d X \\
		& \leq \frac{1}{(2m)!} \frac{1}{ N^{4m} } \sum^N_{ \substack{i_1,j_1,k_1,\cdots,\\i_{2m},j_{2m},k_{2m}=1} } \int_{ \mathbb{T}^{dN} } \phi_2(x_{i_1},x_{j_1},x_{k_1}) \cdots \phi_2(x_{i_{2m}},x_{j_{2m}},x_{k_{2m}}) \bar{\rho}_N \d X \,.
	\end{aligned}
\end{equation*}
For a fixed triple of multi-indices $(I_{2m},J_{2m},K_{2m})$, the above integral term satisfies 
\begin{equation*}
	\begin{aligned}
		\int_{ \mathbb{T}^{dN} } & \phi_2(x_{i_1},x_{j_1},x_{k_1})  \cdots \phi_2(x_{i_{2m}},x_{j_{2m}},x_{k_{2m}}) \bar{\rho}_N \d X\\
		& \leq \int_{ \mathbb{T}^{dN} } \sup_{z,z'} | \phi_2(x_{i_1},z,z') | \cdots \sup_{z,z'} | \phi_2(x_{i_{2m}},z,z') | \bar{\rho}_N \d X \,.
	\end{aligned}
\end{equation*}
Hence
\begin{equation*}
	\begin{aligned}
		& \frac{1}{(2m)!} \int_{ \mathbb{T}^{dN} } \bar{\rho}_N \Big(  \frac{1}{N^2} \sum^N_{i,j,k=1} \phi_2(x_i,x_j,x_k) \Big)^{2m} \d X \\
		& \leq \frac{1}{(2m)!} \frac{1}{N^{4m}} \sum^N_{ \substack{i_1,j_1,k_1,\cdots,\\i_{2m},j_{2m},k_{2m}=1} } \int_{ \mathbb{T}^{dN} } \sup_{z,z'} | \phi_2(x_{i_1},z,z') | \cdots \sup_{z,z'} | \phi_2(x_{i_{2m}},z,z') | \bar{\rho}_N \d X \\
		& = \frac{1}{(2m)!} \sum^N_{ i_1,\cdots, i_{2m}=1 } \int_{ \mathbb{T}^{dN} } \sup_{z,z'} | \phi_2(x_{i_1},z,z') | \cdots \sup_{z,z'} | \phi_2(x_{i_{2m}},z,z') | \bar{\rho}_N \d X \\
		& = \frac{1}{(2m)!} \int_{ \mathbb{T}^{dN} } \Big( \sum^N_{i=1} \sup_{z,z'} | \phi_2(x_i,z,z') | \Big)^{2m} \bar{\rho}_N \d X \,.
	\end{aligned}
\end{equation*}
By the multinomial theorem \cite[Theorem 5.4.1]{Bru2010}
\begin{equation*}
	\begin{aligned}
		\int_{ \mathbb{T}^{dN} }& \Big( \sum^N_{i=1} \sup_{z,z'} | \phi_2(x_i,z,z') | \Big)^{2m} \bar{\rho}_N \\
		& = \sum_{ \substack{ a_1 + \cdots + a_N = 2m \\ a_1 \geq 0,\cdots,a_N} } \frac{ (2m)! }{ (a_1)!\cdots(a_N)! } \int_{ \mathbb{T}^{dN} } \big( \sup_{z,z'} | \phi_2(x_{i_1},z,z') | \big)^{a_1} \cdots \big( \sup_{z,z'} | \phi_2(x_{i_{2m}},z,z') | \big)^{a_N} \bar{\rho}_N \\
		& := \sum_{ \substack{ a_1 + \cdots + a_N = 2m \\ a_1 \geq 0,\cdots,a_N} } \frac{ (2m)! }{ (a_1)!\cdots(a_N)! } M^{a_1}_{a_1} \cdots M^{a_N}_{a_N} \,,
	\end{aligned}
\end{equation*}
where 
\begin{equation*}
	M_{a_i} = \Big( \int_{\mathbb{T}^d}\sup_{z,z'} | \phi_2(x,z,z') |^{a_i} \bar{\rho}(x) \d x \Big)^{ \frac{1}{a_i} } \,.
\end{equation*}
Using the bound $
M_{a_i} \leq a_i \sup_{ p \geq 1 } \Big( \frac{M_p}{p} \Big)$ and the inequality $ n^n \leq e^n n! $, we deduce that
\begin{equation*}
	M^{a_i}_{a_i} \leq e^{a_i}(a_i)! \Big( \sup_{ p \geq 1 } \frac{M_p}{p} \Big)^{a_i} \,.
\end{equation*}
Hence
\begin{equation*}
	\begin{aligned}
		& \frac{1}{ (2m)! } \int_{ \mathbb{T}^{dN} } \bar{\rho}_N \Big( \frac{1}{N^2} \sum^N_{ i,j,k=1 } \phi_2(x_i,x_j,x_k) \Big)^{2m} \d X \\
		& \leq \frac{1}{ (2m)! } \sum_{ \substack{ a_1 + \cdots + a_N = 2m\\ a_1 \geq 0,\cdots,a_N\geq 0}}\frac{ (2m)! }{(a_1)!\cdots(a_N)!} e^{2m}(a_1)! \cdots (a_N)! \Big( \sup_{ p \geq 1 } \frac{M_p}{p} \Big)^{2m} \\
		& = e^{2m} \Big( \sup_{ p \geq 1} \frac{M_p}{p} \Big)^{2m} \sum_{ \substack{ a_1 + \cdots + a_N = 2m \\ a_1 \geq 0,\cdots,a_N\geq 0} } 1 \,.
	\end{aligned}
\end{equation*}
For $ \sum_{ \substack{ a_1 + \cdots + a_N = 2m \\ a_1 \geq 0, \cdots, a_N \geq 0 } } 1$, its value equals the number of nonnegative integer solutions to the equation $ a_1 + \cdots + a_N = 2m $, i.e.
\begin{equation*}
	\sum_{ \substack{ a_1 + \cdots + a_N = 2m \\ a_1 \geq 0, \cdots, a_N \geq 0 } } 1 = | \{ (a_1,\cdots,a_N) | a_1 + \cdots + a_N = 2m, a_i \geq 0 \textrm{ for } 1 \leq i \leq N \} |\,.
\end{equation*}
From the classical combinatorics result \cite[Theorem 2.5.1]{Bru2010} and its proof, we obtain
\begin{equation}\label{comb-result-1}
		\sum_{ \substack{ a_1 + \cdots + a_N = 2m \\ a_1 \geq 0, \cdots, a_N \geq 0 } } 1 = \binom{ 2m + N-1 }{N-1} \,. 
\end{equation}
By Stirling's formula 
\begin{equation*}
	m! = \lambda_m \sqrt{ 2 \pi m} \left( \frac{m}{e} \right)^m
\end{equation*}
with $ 1 \leq \lambda_m \leq \frac{11}{10} $ and $ \lambda_m \to 1 $ as $n \to \infty$, we know that if $2a \geq b$, then
\begin{equation*}
	\begin{aligned}
		\binom{a+b}{b} =&\frac{ \lambda_{a+b} \sqrt{ 2 \pi(a+b) } \big( \frac{a+b}{e} \big)^{a+b} }{ \lambda_a \sqrt{ 2 \pi a} \big( \frac{a}{e} \big)^{a} \lambda_b \sqrt{ 2 \pi b } \big( \frac{b}{e} \big)^{b} } = \frac{ \lambda_{a+b} }{ \lambda_a \lambda_b } \frac{1}{ \sqrt{2\pi} }\sqrt{ \frac{a+b}{ab} }\frac{ (a+b)^{a+b} }{ a^ab^b } \\
		\leq & \big( 1 + \frac{b}{a} \big)^a \big( 1 + \frac{a}{b} \big)^b \leq 3^a \cdot \big( 1 + \frac{1}{b/a} \big)^{ \frac{b}{a} \cdot \frac{a}{b} \cdot b} \leq (3e)^a \,,
	\end{aligned}
\end{equation*}
where the last inequality follows from $ (1+1/n)^n < e $ for all $ n > 0 $. Since $ 4m > N $, applying the above inequality to \eqref{comb-result-1} yields
\begin{equation*}
	\binom{ 2m+N-1 }{ N-1 } \leq (3e)^{2m} \,.
\end{equation*}
Therefore
\begin{equation*}
	\begin{aligned}
		&\frac{1}{ (2m)! } \int_{ \mathbb{T}^{dN} } \bar{\rho}_N \Big( \frac{1}{N^2} \sum^N_{ i,j,k=1 } \phi_2(x_i,x_j,x_k) \Big)^{2m} \d X \leq \Big( 3e^2 \sup_{ p \geq 1} \frac{M_p}{p} \Big)^{2m} \,.
	\end{aligned}
\end{equation*}

\subsection{Proof of Proposition \ref{Pro-Small-m}.} We again work with the expansion
\begin{equation*}
	\begin{aligned}
		\frac{1}{(2m)!} & \int_{ \mathbb{T}^{dN} } \bar{\rho}_N \Big(  \frac{1}{N^2} \sum^N_{ i,j,k=1 } \phi_2(x_i,x_j,x_k) \Big)^{2m} \d X\\
		& \leq \frac{1}{(2m)!} \frac{1}{N^{4m}} \sum^N_{ \substack{ i_1,j_1,k_1,\cdots,\\i_{2m},j_{2m},k_{2m}=1 } } \int_{ \mathbb{T}^{dN} } \phi_2(x_{i_1},x_{j_1},x_{k_1}) \cdots  \phi_2( x_{i_{2m}},x_{j_{2m}},x_{k_{2m}} ) \bar{\rho}_N \d X \,.
	\end{aligned}
\end{equation*}
As previously noted, the key is using the cancellation rule \eqref{Cancel-rules} to count non-vanishing integral terms in the $6m$-fold sum. For easier application, We restate the rule below.
\begin{lemma}\label{Lem-Condition-IJK}
If a triple of multi-indices $(I_{2m}, J_{2m}, K_{2m})$ satisfies neither
	\begin{enumerate}
	\item there exists $l$ such that $i_l \notin \{ i_1,i_2,...,i_{l-1},i_{l+1},...,i_{2m},j_1,\cdots,j_{2m},k_1,\cdots,k_{2m} \}$ nor
	\item there exists $l$ such that $j_l \neq k_l$ and $j_l,k_l \notin \{i_1,i_2,...,i_{2m}\} \cup \{ j_1,...,j_{l-1},j_{l+1},...,j_{2m}\} \cup \{k_1,...,k_{l-1},k_{l+1},...,k_{2m} \}$, then
\end{enumerate}
\begin{equation}\label{Non-0-Integral}
	\int_{ \mathbb{T}^{dN} } \phi_2(x_{i_1},x_{j_1},x_{k_1}) \cdots \phi_2(x_{i_{2m}},x_{j_{2m}},x_{k_{2m}}) \bar{\rho}_N \d X \neq 0 \,.
\end{equation}
\end{lemma}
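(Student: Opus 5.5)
The plan starts with an observation about how the statement must be read. The hypotheses on $\phi_2$ in \eqref{Cancel-rules} are also satisfied by $\phi_2\equiv 0$, and for that choice every integral vanishes. So \eqref{Non-0-Integral} cannot hold for \emph{every} admissible $\phi_2$. I will prove it in the sense in which Lemma \ref{Lem-Condition-IJK} is used: for a triple $(I_{2m},J_{2m},K_{2m})$ failing both (1) and (2), the cancellations \eqref{Cancel-rules} do not force the integral to vanish. Concretely, I will exhibit a $\phi$ satisfying \eqref{Cancel-rules} for which the integral is strictly positive. This shows that conditions (1)--(2) are exactly the index patterns that the cancellation rules annihilate.

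\textbf{Step 1: test kernel.} Choose a probability density $\bar\rho$ and a bounded $g$ on $\mathbb{T}^d$ whose push-forward law is $\tfrac13\delta_2+\tfrac23\delta_{-1}$. Then
\begin{equation*}
\int g\bar\rho=0, \qquad \int g^n\bar\rho=\frac{2^n+2(-1)^n}{3}>0 \quad\text{for all } n\geq 2 .
\end{equation*}
Set $\phi(x,z,z')=g(x)\,h(z,z')$ with $h(z,z')=g(z)+g(z')+g(z)g(z')$. The first rule in \eqref{Cancel-rules} holds because $\int g\bar\rho=0$. The second holds because $\iint h\,\bar\rho(z)\bar\rho(z')\,\d z\,\d z'=0$.

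\textbf{Step 2: expand the product.} In $\prod_l \phi(x_{i_l},x_{j_l},x_{k_l})$, expand each factor $h$ by choosing a nonempty subset $S_l\subseteq\{j_l,k_l\}$. When $j_l=k_l$, the term $g^2$ appears, possibly with the two linear terms combined into $2g$. By independence under $\bar\rho_N$, each resulting monomial integrates to $\prod_t \int g^{n_t}\bar\rho$ with a nonnegative coefficient, where $n_t$ counts the occurrences of the index $t$ among the $i_l$ and the chosen slots. By the moment facts in Step 1, every monomial contributes $\geq 0$, and a monomial contributes $>0$ exactly when no $n_t$ equals $1$. It therefore suffices to find one choice $(S_l)_l$ with all $n_t\neq 1$.

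\textbf{Step 3: choosing the slots (the main obstacle).} Take $S_l=\{j_l,k_l\}$ for all $l$ as the starting choice. Then $n_t$ is the full multiplicity of $t$. Since (1) fails, every $i$-index has $n_t\geq 2$. Since (2) fails, for each $l$ either $j_l=k_l$, which already gives multiplicity $\geq 2$, or at least one of $j_l,k_l$, say $k_l$, appears elsewhere in the full index list. The partner $j_l$ may still have multiplicity $1$. For every such $l$, drop the lonely index and set $S_l=\{k_l\}$.

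The delicate point is that dropping indices can lower the multiplicity of an index that was relied upon elsewhere, possibly down to $1$. The rule to enforce is that only indices of total multiplicity exactly $1$ are ever dropped. Removing an index that appears nowhere else changes no other count, so the remaining counts stay at their full multiplicities, all $\geq 2$. The kept partner $k_l$ has full multiplicity $\geq 2$ and is never dropped, because it is not a multiplicity-one index. I will verify carefully that this single-pass rule is consistent when both $j_l$ and $k_l$ have multiplicity $\geq 2$ (then keep both) and in the case $j_l=k_l$. The resulting monomial has all $n_t\in\{0\}\cup[2,\infty)$, so its integral is positive, and the full integral is strictly positive. This establishes \eqref{Non-0-Integral} for the kernel of Step 1.
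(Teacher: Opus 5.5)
You are right that the implication as literally written cannot hold for every kernel obeying \eqref{Cancel-rules} (take $\phi\equiv 0$). But you resolved this in the wrong direction, so the proposal never proves what the lemma is needed for.

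In the proof of Proposition \ref{Pro-Small-m}, $\mathcal{T}_{all}$ is the set of triples whose integral is nonzero. Its cardinality is bounded by counting only triples that fail both (1) and (2): ``the multiplicity of each component of $I_{2m}$ cannot be one'', and ``either $j_l=k_l$ or at least one of them \dots''. That restriction requires the implication
\begin{equation*}
\text{integral}\neq 0 \ \Longrightarrow\ \text{neither (1) nor (2)}.
\end{equation*}
Equivalently, if (1) or (2) holds, the integral vanishes for every $\phi$ satisfying \eqref{Cancel-rules}. This is what the paper's proof establishes (its ``it suffices'' is really a rereading of the statement in this direction), and it is absent from your Steps 1--3.

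Your construction shows the opposite inclusion: every pattern failing (1)--(2) survives for some admissible kernel. So the counting cannot be improved using the cancellation rules alone. As a sharpness remark this is correct; the moment computation and the rule of dropping only multiplicity-one indices both check out. But it gives no upper bound on $|\mathcal{T}_{all}|$, which is why your claim that you are proving the lemma ``in the sense in which it is used'' is mistaken. It also does not justify your closing claim that (1)--(2) are ``exactly'' the annihilated patterns, since only one inclusion is proved.

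The missing step is a short Fubini argument using the product structure of $\bar\rho_N$.
\begin{itemize}
\item Under (1), let $p=i_l$. The variable $x_p$ occurs only in the $l$-th factor and only in its first slot, because the excluded set contains all $j$'s and $k$'s. Integrating in $x_p$ first gives
\begin{equation*}
\int_{\mathbb{T}^d}\phi(x_p,x_{j_l},x_{k_l})\,\bar\rho(x_p)\,\d x_p=0
\end{equation*}
by the first rule in \eqref{Cancel-rules}, so the whole integral is $0$.
\item Under (2), let $p=j_l\neq q=k_l$. The variables $x_p,x_q$ occur only in the $l$-th factor, in slots two and three. Integrating them first against $\bar\rho(x_p)\bar\rho(x_q)$ gives $0$ by the second rule. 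The condition $j_l\neq k_l$ is essential here, since \eqref{Cancel-rules} says nothing about $\int\phi(x,z,z)\bar\rho(z)\,\d z$.
\end{itemize}
This vanishing direction is what you need to supply. Your example can then be kept as a remark on optimality.
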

\begin{proof}
	It suffices to prove that whenever condition (1) or (2) holds,
	$$ 	\int_{ \mathbb{T}^{dN} } \phi_2(x_{i_1},x_{j_1},x_{k_1}) \cdots \phi_2(x_{i_{2m}},x_{j_{2m}},x_{k_{2m}}) \bar{\rho}_N \d X = 0\,. $$
If Condition (1) holds, we may assume $i_l=p$. Then the cancellation rule \eqref{Cancel-rules} yields 
	\begin{equation*}
		\int_{ \mathbb{T}^{d(N-1)} } \Big( \int_{\mathbb{T}^d} \phi_2(x_{i_l},x_{j_l},x_{k_l}) \, \d x_{i_l} \Big)  \prod_{t \neq l} \phi_2(x_{i_t},x_{j_t},x_{k_t})  \prod_{s \neq p}\bar{\rho}(x_s) \d x_s=0\,.
	\end{equation*}
If Condition (2) holds, we may assume  $j_l=p$ and $k_l=q$ with $p\neq q$. Then \eqref{Cancel-rules} yields 
	\begin{equation*}
		\int_{ \mathbb{T}^{d(N-2)} } \Big( \int_{\mathbb{T}^{2d}} \phi_2(x_{i_l},x_{j_l},x_{k_l}) \, \d x_{j_l} \d x_{k_l} \Big)  \prod_{t \neq l} \phi_2(x_{i_t},x_{j_t},x_{k_t})  \prod_{s \neq p,q}\bar{\rho}(x_s) \d x_s=0\,.
	\end{equation*}
\end{proof}

\vspace{2mm}

Let $\mathcal{T}_{all}$ denote the set of  all multi-indices $(I_{2m},J_{2m},K_{2m})$ for which \eqref{Non-0-Integral} holds. We next calculate $| \mathcal{T}_{all} |$
systematically using the basic counting principles in combinatorics.
\vspace{2mm}

\textbf{\emph{Step 1: Define the classification criteria for $I_{2m}$.}}
Let $S_{I_{2m}}$ denote the set of components of the multi-index $I_{2m}$, i.e.
$$S_{I_{2m}} = \{ i_1,i_2,\cdots,i_{2m} \}\,.$$
Then the cardinality of $S_{I_{2m}}(i.e. |S_{I_{2m}}|)$ equals the number of distinct integers in $I_{2m}$. From (1) in Lemma \ref{Lem-Condition-IJK}, the multiplicity of each component of $I_{2m}$ cannot be one. Then $$1 \leq s := |S_{I_{2m}}| \leq m \,.$$ Since the choices of $I_{2m}$ are disjoint for different $s$, we can partition all choices of $I_{2m}$ by $s$.

\textbf{\emph{Step 2: Count the total number of possible $I_{2m}$ for fixed $s$.}} 

\textbf{\emph{Substep 2.1: Choose $s$ distinct integers.}} This is equivalent to an unordered selection of $s$ elements from the set $\{ 1,2,...,N\}$, which has $\binom{N}{s}$ choices.
 
{\textbf{\emph{Substep 2.2: Count the total choices of $I_{2m}$ for fixed $s$ distinct integers.}} Without loss of generality, let $S_{I_{2m}}=\{1,2,\cdots,s\}$, then the multiplicities $A_N$ of $I_{2m}$ satisfies
 $$\begin{cases}
 	a_t \geq 2 , \ 1 \leq t \leq s,\\
 	a_t = 0,\ s < t \leq N, \\
 	a_1+\cdots+a_s = 2m.
 \end{cases}$$
  From \cite[Theorem 2.4.2]{Bru2010}, the total number of choices for any fixed $A_N = (a_1,\cdots,a_s,0,\cdots,0)$ is equivalent to the number of permutations of the multiset $\{ a_1\cdot 1,a_2 \cdot 2,\cdots,a_s \cdot s,0 \cdot (s+1),\cdots,0 \cdot N \}$, i.e.
 $\frac{ (2m)! }{ (a_1)!\cdots(a_s)! }$.
Hence, the total number of choices  equals
 $$\sum_{ \substack{ a_1+\cdots+a_s = 2m,\\ a_1 \geq 2,\cdots,a_s \geq 2 } }\frac{(2m)!}{ (a_1)!\cdots(a_s)! }\,.$$

\textbf{\emph{Step 3: Count the total number of possible $(J_{2m},K_{2m})$ for any $I_{2m}$ with $S_{I_{2m}}=\{1,\cdots,s\}$.}}  Let $\mathcal{F}^s$ denote the set of these $(J_{2m},K_{2m})$. We first determine a classification of $\mathcal{F}^s$.  For each class, by the multiplication principle, we determine $(j_1,k_1),\cdots,(j_{2m},k_{2m})$ in order, compute the number of possible $(j_l,k_l)$ at each step, and multiply them to obtain the total number of possible $(J_{2m},K_{2m})$ in that class. In the following, we determine the classification of $\mathcal{F}^s$ by analyzing the number of choices for each pair $(j_l.k_l)$.

\textbf{\emph{Substep 3.1: Define the classification criteria for $\mathcal{F}^s$.}}
Let $S_r=\{ j_1,k_1,\cdots, j_r,k_r\}$ be the set of components in the first $r$ pairs of positions of  $(J_{2m},K_{2m})$, where $0\leq r\leq 2m$. Since we choose each pair $(j_l,k_l)$ in order from $l=1$ to $2m$, Lemma \ref{Lem-Condition-IJK} (2) implies that for every such pair we must have either $j_l=k_l$ or $j_l\neq k_l$ with at least one of them already in set $S_{I_{2m}} \cup S_{l-1}$. Thus the choice of $(j_l,k_l)$ adds at most one element outside of $S_{I_{2m}} \cup S_{l-1}$ to $S_l$, i.e. $0 \leq | S_l-S_{I_{2m}} \cup S_{l-1} | \leq 1$. Accordingly, the number of possible $(j_l,k_l)$ falls into three cases:
\begin{itemize}
	\item If both $j_l$ and $k_l$ are in set $S_{I_{2m}} \cup S_{l-1}$, then there are $|S_{I_{2m}} \cup S_{l-1}|^2$  choices;
	\item if $j_l$ is in $S_{I_{2m}} \cup S_{l-1}$ but $k_l$ is not, there are $|S_{I_{2m}} \cup S_{l-1} | | ( N- |S_{ I_{2m} } \cup S_{l-1} |)$ choices; 
	\item if $j_l$ is not in $S_{I_{2m}} \cup S_{l-1}$ but $k_l$ is, there are $(N- | S_{I_{2m}} \cup S_{l-1} | )( | S_{I_{2m}} \cup S_{l-1} | + 1)$ choices.
\end{itemize}
 Hence the number of choices for each pair $(j_l,k_l)$ depends only on $| S_{I_{2m}} \cup S_{l-1}|$. Note that $| S_{I_{2m}} \cup S_{l-1}| = |S_{I_{2m}}| + | S_{l-1} - S_{I_{2m}} | = s + |S_{l-1} - S_{I_{2m}}
| $ and $0 \leq | S_l - S_{I_{2m} } \cup S_{l-1} | \leq 1$ implies $ 0 \leq | S_l - S_{I_{2m}} | \leq l$. This shows that when choosing values for the first $r$ pairs of positions in $(J_{2m},K_{2m})$, the corresponding $S_r$ introduces at most $r$ new elements from $\{1,\cdots, N\}$ that are not in $S_{I_{2m}}$, where $0\leq r\leq 2m$. Note that each $(J_{2m},K_{2m})$ uniquely determines an integer $r$, namely the number of distinct values that lie outside $S_{I_{2m}}$ among the values taken by the components of $(J_{2m},K_{2m})$.  We classify $\mathcal{F}^s$ by $r$, which yields a partition of $\mathcal{F}^s$. 

Let $\mathcal{F}_r=\big\{(J_{2m},K_{2m}): | S_{2m}-S_{I_{2m}} |=r\big\}$ for $r=0,1,\cdots, 2m$. Then
\begin{equation*}
	|\mathcal{F}^s|=\sum^{2m}_{r=0}|\mathcal{F}_r|\,.
\end{equation*}

\textbf{\emph{Substep 3.2: Deriving an upper bound for  $|\mathcal{F}^s|$.}} We first analyze $|\mathcal{F}_r|$ for a fixed $r$. For $r=0$, Substep 3.1 implies that every component of each pair $(j_l,k_l)$ is chosen from $S_{I_{2m}}$. Hence $|\mathcal{F}_0|=s^{4m}$. For $r\geq 1$, note that these $r$ distinct values must occupy at least $r$ distinct pairs of positions. We derive an upper bound for $|\mathcal{F}_r|$ as follows. 
\begin{itemize}
\item Choose $r$ pairs from the $2m$ pairs, and for each chosen pair, decide whether the element from $\{1,\cdots,N\} \setminus S_{I_{2m}}$ is placed in the left or right component. This gives $\binom{2m}{r}\cdot 2^r$ choices in total. 

\item For each of the $\binom{2m}{r}\cdot 2^r$ choices, we follow the rule from Substep 3.1 to determine the value assignment of each pair $(j_l,k_l)$ in order $l = 1,\cdots, 2m$. We then multiply the numbers of possibilities for all pairs to obtain the total number of possible $(J_{2m},K_{2m})$ for that choice.
\item Let $\mathcal{N}_r$ be the sum over all $\binom{2m}{r}\cdot 2^r$ choices of the number of possible $(J_{2m},K_{2m})$ generated by that choice. Note that different choices for the same $r$ may lead to the same $(J_{2m},K_{2m})$. Hence $\mathcal{N}_r$ is an upper bound for $|\mathcal{F}_r|$. 
\end{itemize}

From the above analysis, even after fixing the $r$ positions that contain elements outside $S_{I_{2m}}$, the number of possibilities for each of the $\binom{2m}{r}\cdot 2^r$ choices is determined by applying the rule in Substep 3.1 to each pair $(j_l,k_l)$. To avoid enumerating the $ \binom{2m}{r} \cdot 2^r$ cases one by one to obtain $\mathcal{N}_r$,  we next derive a uniform upper bound for all cases, denoted by $\mathcal{T}_r$. 

Recall the three cases in Substep 3.1 for the number of possible choices of each pair  $(j_l,k_l)$. The number of choices in each case depends on $| S_{I_{2m}} \cup S_{l-1} |$. Thus we observe: 
\begin{itemize}
	\item The more elements outside $S_{I_{2m}}$ are assigned to $(j_1,k_1),\cdots,(j_{i-1},k_{i-1})$, the larger $| S_{I_{2m}} \cup S_{l-1} |$;

	\item The number of choices for  $(j_l,k_l)$ is maximized when $j_l$ is not in $S_{I_{2m}}\cup S_{l-1}$ but $k_l$ is.
\end{itemize}

Hence, among all $ \binom{2m}{r} \cdot 2^r$ possible cases, the number of choices for each pair $(j_l,k_l)$ that maximizes the total number of choices of $(J_{2m},K_{2m})$ is shown below.

\begin{table}[h!]
	\centering
	\renewcommand{\arraystretch}{1.5}
	\scalebox{0.75}{
\begin{tabular}{c|cc|cc|c|cc|cc|c|cc}
\hline Index &$j_1$ &$k_1$ &$j_2$ &$k_2$ & $ \cdots $ &$j_r$& $k_r$ & $j_{r+1}$ &$k_{r+1}$ & $ \cdots $ & $j_{2m}$ &$k_{2m}$ \\
\hline Number of choices &$N-s$ &$s+1$ &$N-(s+1)$ &$s+2$ & $\cdots $ &$N-(s+r-1)$& $s+r$ & $s+r$ &$s+r$ & $\cdots $ & $s+r$ &$s+r$ \\
\hline
\end{tabular}
}
\end{table}

\noindent From the table, we see that
\begin{equation*}
	\begin{aligned}
		\mathcal{T}_r \leq &(N-s)(s+1)( N - (s+1) )(s+2) \cdots ( N - (s+r-1))(s+r) \\
		&\cdot\underbrace{ (s+r)(s+r) \cdots (s+r)(s+r)}_{4m-2r} \\
		= & \frac{ (N-s)! }{ (N-(s+r))! } (s+1)(s+2) \cdots (s+r)(s+r)^{4m-2r} \,.
	\end{aligned}
\end{equation*}
Note that the above equation also holds for $r=0$. Then we have
\begin{equation*}
|\mathcal{F}^s|=\sum^{2m}_{r=0}|\mathcal{F}_r|\leq \sum^{2m}_{r=0}\mathcal{N}_r\leq \sum^{2m}_{r=0} \binom{2m}{r} 2^r\mathcal{T}_r\,.
\end{equation*}

\textbf{\emph{Step 4: Calculate $|\mathcal{T}_{all}|$.}} Based on the above three steps, we have
\begin{equation*}
	\begin{aligned}
		| \mathcal{T}_{all} |=& \sum^m_{ s=1 } \binom{N}{s} \sum_{ \substack{ a_1+\cdots+a_s = 2m,\\ a_1 \geq 2,\cdots,a_s \geq 2 } } \frac{(2m)!}{ (a_1)!\cdots(a_s)! } \cdot |\mathcal{F}^s|\\
		\leq & \sum^m_{ s=1 } \binom{N}{s} \sum_{ \substack{ a_1+\cdots+a_s = 2m,\\ a_1 \geq 2,\cdots,a_s \geq 2 } } \frac{(2m)!}{ (a_1)!\cdots(a_s)! } \cdot \sum^{2m}_{r=0} \binom{2m}{r} 2^r \mathcal{T}_r \,.
	\end{aligned}
\end{equation*}
Given the bounds $\binom{a}{b} \leq  e^ba^bb^{-b} $, $ \binom{a}{b} \leq 2^a $ for $ 0 \leq b \leq a$, and $ 2^r \leq 2^{2m} $, we deduce that
\begin{equation}\label{Bound-T}
	\begin{aligned}
		| \mathcal{T}_{all} | \leq e^m 2^{4m} \tilde{ \mathcal{T} }\,,
	\end{aligned}
\end{equation}
where
\begin{equation*}
	\tilde{\mathcal{T}} := \sum^m_{ s=1 } \sum^{2m}_{r=0} \sum_{ \substack{ a_1+\cdots+a_s = 2m,\\ a_1 \geq 2,\cdots,a_s \geq 2 } } \frac{(2m)!}{ (a_1)!\cdots(a_s)! } N^ss^{-s} \frac{ (N-s)! }{ (N-(s+r))! } (s+1)(s+2) \cdots (s+r)(s+r)^{4m-2r} \,.
\end{equation*}
Divide $\tilde{ \mathcal{T} }$ into 
\begin{equation*}
	\begin{aligned}
		\tilde{ \mathcal{T} }
		= \sum^m_{s=1} \sum^s_{r=0} \sum_{ \substack{ a_1+\cdots+a_s=2m,\\a_1\geq 2,\cdots,a_s\geq 2 } } \frac{(2m)!}{ (a_1)!\cdots(a_s)! } \tilde{ \mathcal{T} }_1 + \sum^m_{s=1} \sum^{2m}_{r=s+1} \sum_{ \substack{ a_1+\cdots+a_s=2m,\\a_1 \geq 2,\cdots,a_s \geq 2 } } \frac{(2m)!}{ (a_1)!\cdots(a_s)! } \tilde{ \mathcal{T} }_2 \,,
	\end{aligned}
\end{equation*}
where
\begin{equation*}
	\begin{aligned}
		\tilde{ \mathcal{T} }_1 := & N^s s^{-s} \frac{(N-s)!}{ ( N - (s+r) )! } \underbrace{ (s+1) \cdots (s+r) }_r (s+r)^{4m-2r} \,,\\
		\tilde{ \mathcal{T} }_2 := &  N^s s^{-s} \frac{ (N-s)! }{ ( N - (s+r) )! } \underbrace{ (s+1) \cdots (s+r) }_r (s+r)^{4m-2r} \,.
	\end{aligned}
\end{equation*}
Note that $4 \leq 4m \leq N$. When $0 \leq r \leq s \leq m$, one has
\begin{equation*}
	\begin{aligned}
		\tilde{ \mathcal{T} }_1 \leq & N^s s^{-s} N^r (s+r)^r (s+r)^{4m-2r} \\
		= & N^{s+r} (s+r)^{4m-r} s^{-s} \\
		= & N^{s+r} (s+r)^{4m-r-s} \left( \frac{s+r}{s} \right)^s \\
		\leq & N^{s+r} N^{4m-r-s} 2^s \leq N^{4m} 2^m \,.
	\end{aligned}
\end{equation*}
When $s < r \leq 2m$, one has
\begin{equation*}
	\begin{aligned}
		\tilde{ \mathcal{T} }_2 \leq & N^s s^{-s} N^r \underbrace{ (s+1)(s+2) \cdots(s+s) }_s \underbrace{ \cdots
		(s+r) }_{r-s} (s+r)^{4m-2r} \\
		\leq & N^{s+r} s^{-s} (s+s)^s (s+r)^{r-s} (s+r)^{4m-2r} \\
		\leq & N^{s+r} s^{-s} (2s)^s (s+r)^{r-s} (s+r)^{4m-2r} \\
		\leq & N^{s+r} 2^sN^{r-s} N^{4m-2r} \\
		= & N^{ (s+r) + (r-s) + (4m-2r) } 2^s \leq N^{4m} 2^m = N^{4m} 2^m \,.
	\end{aligned}
\end{equation*}
Hence $	\{ \tilde{ \mathcal{T} }_1, \tilde{ \mathcal{T} }_2 \}\leq N^{4m} 2^m $, which implies that
\begin{equation*}
	\tilde{ \mathcal{T} } \leq \sum^m_{s=1} \sum^{2m}_{r=0} \sum_{ \substack{ a_1+\cdots+a_s=2m,\\a_1 \geq 2,\cdots,a_s \geq 2 } } \frac{(2m)!}{ (a_1)!\cdots(a_s)! } N^{4m} 2^m \leq \sum^m_{s=1} \sum_{ \substack{ a_1+\cdots+a_s=2m,\\a_1\geq 2,\cdots,a_s\geq 2  } } \frac{(2m)!}{ (a_1)!\cdots(a_s)! } 2m 2^m N^{4m} \,.
\end{equation*}
Combining this with \eqref{Bound-T} gives
\begin{equation}\label{T-all}
	\begin{aligned}
		| \mathcal{T}_{all} | \leq & e^m 2^{2m} \sum^m_{s=1} \sum_{ \substack{ a_1+\cdots+a_s=2m,\\a_1\geq 2,\cdots,a_s\geq 2 } } \frac{(2m)!}{ (a_1)!\cdots(a_s)! } 2m 2^m N^{4m} \\
		= & 2m (8e)^m N^{4m} \sum^m_{s=1} \sum_{ \substack{ a_1+\cdots+a_s=2m,\\a_1 \geq 2,\cdots,a_s \geq 2 } } \frac{(2m)!}{ (a_1)!\cdots(a_s)! } \,.
	\end{aligned}
\end{equation} 
Return to the sum-integral
$$ \sum^N_{ \substack{ i_1,j_1,k_1,\cdots,\\i_{2m},j_{2m},k_{2m}=1 } } \int_{ \mathbb{T}^{dN} } \phi_2(x_{i_1},x_{j_1},x_{k_1}) \cdots \phi_2( x_{i_{2m}},x_{j_{2m}},x_{k_{2m}} ) \bar{\rho}_N \d X \,.$$
For any fixed multi-indices $I_{2m}$ with multiplicities $A_N = (a_1,...,a_s,0,\cdots,0)$, $J_{2m}$ and $K_{2m}$, the proof of Proposition \ref{Pro-Large-m} shows that 
\begin{equation}\label{Integral-term}
	\begin{aligned}
		\int_{ \mathbb{T}^{dN} } & \phi_2( x_{i_1},x_{j_1},x_{k_1} ) \cdots \phi_2(x_{i_{2m}},x_{j_{2m}},x_{k_{2m}}) \bar{\rho}_N \d X \\
		\leq & \int_{ \mathbb{T}^{dN} } \sup_{z,z'}| \phi_2(x_{i_1},z,z') | \cdots \sup_{z,z'} | \phi_2(x_{i_{2m}},z,z') | \bar{\rho}_N \d X 
		\leq e^{2m} (a_1)! \cdots(a_s)! \Big( \sup_{ p \geq 1 } \frac{M_p}{p} \Big)^{2m} \,.
	\end{aligned}
\end{equation}
where 
\begin{equation*}
	M_{p} = \Big( \int_{ \mathbb{T}^d }\sup_{z,z'} | \phi_2(x,z,z') |^{p} \bar{\rho}(x) \d x \Big)^{ \frac{1}{p} } \,.
\end{equation*}
Thus from \eqref{T-all} and \eqref{Integral-term}, we obtain
\begin{equation*}
	\begin{aligned}
		\frac{1}{ (2m)! } & \int_{ \mathbb{T}^{dN} }  \Big( \frac{1}{N^2}\sum^N_{i,j,k=1} \phi_2(x_i,x_j,x_k) \Big)^{2m}\bar{\rho}_N \d X \\
		\leq & \frac{1}{(2m)!} \frac{1}{ N^{4m} } \sum^N_{ \substack{ i_1,j_1,k_1,\cdots,\\i_{2m},j_{2m},k_{2m}=1} } \int_{ \mathbb{T}^{dN} } \phi_2(x_{i_1},x_{j_1},x_{k_1}) \cdots \phi_2(x_{i_{2m}},x_{j_{2m}},x_{k_{2m}}) \bar{\rho}_N \d X \\
		\leq & \frac{1}{(2m)!} \frac{1}{ N^{4m} } \cdot | \mathcal{T}_{all} | \cdot e^{2m} (a_1)! \cdots(a_s)! \Big( \sup_{ p \geq 1} \frac{M_p}{p} \Big)^{2m} \\
		= & \frac{1}{(2m)!} \frac{1}{ N^{4m} } \cdot 2m (8e)^m N^{4m} \sum^m_{s=1} \sum_{ \substack{ a_1+\cdots+a_s=2m,\\a_1\geq 2,\cdots,a_s \geq 2 } } (2m)! \cdot e^{2m} \Big( \sup_{ p \geq 1} \frac{M_p}{p} \Big)^{2m} \\
		= & 2m (8e^3)^m \Big (\sup_{ p \geq 1 } \frac{M_p}{p} \Big)^{2m} \sum^m_{s=1} \sum_{ \substack{ a_1+\cdots+a_s=2m,\\a_1 \geq 2,\cdots,a_s\geq 2 } }  1 \,.
	\end{aligned}
\end{equation*}
Note that 
\begin{equation*}
	\begin{aligned}
		\sum_{ \substack{ a_1+\cdots+a_s=2m \\ a_1 \geq 2,\cdots,a_N \geq 2} } 1 = &| \{(a_1,\cdots,a_s) | a_1+\cdots+a_s=2m, a_i \geq 2 \textrm{ for } 1 \leq i \leq s \} | \\
		= & | \{ ( \bar{a}_1,\cdots,\bar{a}_s) | \bar{a}_1 + \cdots + \bar{a}_s = 2m-2s, \bar{a}_i \geq 0 \textrm{ for } 1 \leq i \leq s \} | \\
		= &	\sum_{ \substack{ \bar{a}_1 + \cdots + \bar{a}_s = 2m-2s \\ \bar{a}_1 \geq 0,\cdots,\bar{a}_s \geq 0 } } 1\,.
	\end{aligned}
\end{equation*}
Similar to \eqref{comb-result-1}, 
\begin{equation*}
	\sum_{ \substack{ \bar{a}_1 + \cdots + \bar{a}_s = 2m-2s \\ \bar{a}_1 \geq 0,\cdots,\bar{a}_s \geq 0} } 1 = \binom{2m-2s-1}{s-1} \,. 
\end{equation*}
Since $\binom{2m-2s-1}{s-1} \leq 2^{2m} $, we conclude that
\begin{equation*}
	\begin{aligned}
		\frac{1}{(2m)!} & \int_{ \mathbb{T}^{dN} } \bar{\rho}_N \Big(  \frac{1}{N^2} \sum_{i,j,k=1} \phi_2(x_i,x_j,x_k) \Big)^{2m} \d X 
		\leq 2m (8e^3)^m \Big( \sup_{ p \geq 1} \frac{M_p}{p} \Big)^{2m} m \cdot 2^{2m} \\
		= & 2 m^2 (32e^3)^m \Big( \sup_{ p \geq 1} \frac{M_p}{p} \Big)^{2m} = 2m^2 \Big( \sqrt{32e^3} \sup_{ p \geq 1 } \frac{M_p}{p} \Big)^{2m} \,.
	\end{aligned}
\end{equation*}

\section{ proof of Theorem \ref{Thm-solution-Limit-Eq} }\label{sec-proof-Thm-solution-Limit-Eq}
This section is devoted to the well-posedness of the limiting equation \eqref{Limit-Eq}. We establish a priori estimates for the solution and present the proof via a classical iterative method; see for instance \cite{Deg1986,GLM2025, MB2002}.

\subsection{A priori estimates}  
We next derive a priori bounds on the $H^s$-norm of the solution of Equation \eqref{Limit-Eq}; see Lemma \ref{A-priori-H^s-estimates} below. 
\begin{lemma}\label{A-priori-H^s-estimates}
	Let $T \in ( 0, \infty ]$ and $\bar{\rho}$ with $\inf \bar{\rho} > 0$ be a classical solution to Eq. \eqref{Limit-Eq} in the time interval $[0, T)$. Then there is constant $C= C(s,d,\underline{\sigma},\|K\|_{L^\infty},\|\sigma\|_{W^{2,\infty}})$ such that 
	\begin{equation*}
		\| \bar{\rho} \|^2_{H^s} \leq C e^{C t} \ |\bar{\rho}_0 \|^2_{H^s} \,, \quad t \in [ 0, T) \,.
	\end{equation*}
\end{lemma}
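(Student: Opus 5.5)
We prove Lemma \ref{A-priori-H^s-estimates} by a standard energy estimate in $H^s$. The equation is written in nondivergence form with diffusion coefficient $a_\alpha(t,x):=(\sigma_{\alpha\alpha}*\bar\rho)^2$. The key structural facts are:
\begin{itemize}
\item mass conservation together with $\bar\rho\ge 0$ (which follows from $\inf\bar\rho>0$) gives $\|\bar\rho(t)\|_{L^1}=1$;
\item this yields the uniform ellipticity bound $a_\alpha\ge \underline{\sigma}^2$, since $\sigma_{\alpha\alpha}>\underline\sigma$ and $\int\bar\rho=1$;
\item derivatives can be moved onto the kernels, so that $\|\partial^\gamma(\sigma_{\alpha\alpha}*\bar\rho)\|_{L^\infty}\le \|\sigma\|_{W^{2,\infty}}$ for $|\gamma|\le 2$, and $\|K*\bar\rho\|_{L^\infty}\le\|K\|_{L^\infty}$. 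These bounds are independent of any norm of $\bar\rho$ other than its mass.
\end{itemize}
For higher derivatives of the coefficients we write $\partial^\gamma(\sigma*\bar\rho)=\partial^{\gamma_1}\sigma*\partial^{\gamma_2}\bar\rho$ with $|\gamma_1|\le 2$. This gives $\|\sigma*\bar\rho\|_{H^{k}}\lesssim\|\sigma\|_{W^{2,\infty}}(1+\|\bar\rho\|_{H^{k-2}})$, and similarly $\|K*\bar\rho\|_{H^k}\lesssim\|K\|_{L^\infty}\|\bar\rho\|_{H^{k}}$ via Young's inequality $\|K*f\|_{L^2}\le\|K\|_{L^1}\|f\|_{L^2}$.

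\textbf{Energy estimate.} Apply $\partial^\gamma$, $|\gamma|\le s$ (or $\Lambda^s$ if $s$ is not an integer), to
\[
\partial_t\bar\rho=-\nabla\cdot((K*\bar\rho)\bar\rho)+\sum_\alpha\partial_\alpha^2(a_\alpha\bar\rho),
\]
pair with $\partial^\gamma\bar\rho$, and integrate by parts once in the diffusion term.

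The top-order diffusion contribution is $-\sum_\alpha\int a_\alpha|\partial_\alpha\partial^\gamma\bar\rho|^2\le-\underline\sigma^2\|\nabla\partial^\gamma\bar\rho\|_{L^2}^2$. This is the dissipation we will use to absorb everything else.

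The remaining diffusion terms are commutators of the form $\int\partial_\alpha\partial^\gamma\bar\rho\,[\partial^\gamma,a_\alpha]\partial_\alpha\bar\rho$ and $\int\partial_\alpha\partial^\gamma\bar\rho\,\partial^\gamma(\bar\rho\,\partial_\alpha a_\alpha)$. We bound them with Kato--Ponce/Moser commutator estimates, using the fact that $s>2+d/2$ gives $H^{s-1}\hookrightarrow W^{1,\infty}$ and $H^{s-2}\hookrightarrow L^\infty$. The resulting bound is
\[
\|\nabla\partial^\gamma\bar\rho\|_{L^2}\,\big(\|\nabla a\|_{L^\infty}\|\bar\rho\|_{H^s}+\|a\|_{H^{s}}\|\nabla\bar\rho\|_{L^\infty}\big).
\]
The point is that $\|a\|_{H^s}\lesssim\|\sigma\|_{W^{2,\infty}}^2(1+\|\bar\rho\|_{H^{s-2}})$ gains two derivatives from the convolution. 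The lower-order factor $\|\nabla\bar\rho\|_{L^\infty}\|\bar\rho\|_{H^{s-2}}$ is still quadratic, which would produce a Riccati-type inequality rather than exponential growth. We avoid this by interpolation: $\|\nabla\bar\rho\|_{L^\infty}\|\bar\rho\|_{H^{s-2}}\lesssim\|\bar\rho\|_{H^{s}}$ up to factors controlled by the $L^1$ norm. Alternatively, we bound $\|a\|_{H^s}$ with the derivatives placed entirely on $\sigma$ at top order and distributed otherwise, since $\partial^\gamma(f*g)$ with $f\in W^{2,\infty}$ allows us to keep at most $s-2$ derivatives on $\bar\rho$ in $L^1$-type norms.

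The drift term is handled the same way, with $\|K*\bar\rho\|_{L^\infty}\le\|K\|_{L^\infty}$. We integrate by parts once more so that $\nabla$ falls on $\partial^\gamma\bar\rho$, then use Young's inequality to absorb $\frac{\underline\sigma^2}{2}\|\nabla\partial^\gamma\bar\rho\|^2$.

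\textbf{Conclusion and main obstacle.} Summing over $|\gamma|\le s$ yields $\frac{d}{dt}\|\bar\rho\|_{H^s}^2\le C\|\bar\rho\|_{H^s}^2$, and Gronwall's inequality gives the claim. The main obstacle is keeping the estimate linear in $\|\bar\rho\|_{H^s}$. This is exactly where the product structure $a_\alpha\bar\rho$ of nonlocal coefficient times density, together with the smoothing by $\sigma\in W^{2,\infty}$, must be exploited carefully. The first thing we would try is to write every coefficient derivative as a convolution of a derivative of $\sigma$ (of order at most $2$) with a derivative of $\bar\rho$, and use $\|f*g\|_{L^\infty}\le\|f\|_{L^\infty}\|g\|_{L^1}$. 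This bounds all coefficient factors up to order $2$ by constants, with higher orders shifted onto $\bar\rho$ in $L^2$ at total order at most $s$.
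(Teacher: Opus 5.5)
There is a genuine gap at exactly the step you flag as the main obstacle. Nothing in the proposal makes the differential inequality linear with a constant depending only on $s,d,\underline{\sigma},\|K\|_{L^\infty},\|\sigma\|_{W^{2,\infty}}$.

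With your Kato--Ponce placement, the diffusion commutator leaves $\|\nabla\partial^\gamma\bar\rho\|_{L^2}\,\|\bar\rho\|_{H^{s-2}}\|\nabla\bar\rho\|_{L^\infty}$. The Moser bound for $\partial^\gamma(\bar\rho\,\partial_\alpha a_\alpha)$ leaves the analogous term $\|\nabla\partial^\gamma\bar\rho\|_{L^2}\,\|\bar\rho\|_{L^\infty}\|\bar\rho\|_{H^{s-1}}$. The interpolation you invoke, $\|\nabla\bar\rho\|_{L^\infty}\|\bar\rho\|_{H^{s-2}}\lesssim\|\bar\rho\|_{H^s}$ with constants depending only on the mass, is false for $d\ge 2$. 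Take $\bar\rho_\lambda=\tfrac12+\tfrac12\lambda^d g(\lambda x)$ with $g\ge 0$ a smooth bump of unit mass; it is positive with mass one. The left side is of order $\lambda^{s-1+3d/2}$, while the right side is of order $\lambda^{s+d/2}$.

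Worse, the full term scales like $\lambda^{2s+2d}$. For $d\ge 2$ this is at or above the scale $\lambda^{2s+d+2}$ of the dissipation $\|\nabla\partial^\gamma\bar\rho\|_{L^2}^2$, so no Young splitting with data-independent constants can absorb it.

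Your alternative, putting coefficient derivatives in $L^\infty$ through $\|\partial^{m-2}\bar\rho\|_{L^1}$, is a better H\"older choice. It still leaves quadratic products such as $\|\bar\rho\|_{W^{s-2,1}}\|\nabla\bar\rho\|_{L^2}$, and the proposal gives no mechanism for linearizing them.

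The drift is not ``handled the same way'' either. The bound $\|K*\bar\rho\|_{L^\infty}\le\|K\|_{L^\infty}$ covers only the term with no derivative on the convolution. The terms $(K*\partial^\beta\bar\rho)\,\partial^{\gamma-\beta}\bar\rho$ with $\beta\neq 0$ are quadratic.

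The paper avoids this by never closing a single $H^s$ inequality; it proceeds order by order.
\begin{itemize}
\item At order zero, every coefficient is bounded by the mass alone. This gives $\|\bar\rho\|_{L^2}^2\le\|\bar\rho_0\|_{L^2}^2e^{Ct}$ together with a bound on $\int_0^t\|\nabla\bar\rho\|_{L^2}^2$.
\item At order $k$, each quadratic term is a top-order factor times lower-order norms. Those norms are already controlled uniformly in time and, through the previous level's dissipation, in $L^2_t$. For instance, at first order $\int_0^t\|\nabla\bar\rho\|_{L^2}^2\|\bar\rho\|_{L^2}^2$ is bounded by $\sup_t\|\bar\rho\|_{L^2}^2\int_0^t\|\nabla\bar\rho\|_{L^2}^2$.
\end{itemize}

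The price is that the constants $C_k$ in $\|\partial^k\bar\rho\|_{L^2}^2\le C_k e^{2kCt}$ depend on lower-order norms of $\bar\rho_0$; the first-order bound already contains $\|\bar\rho_0\|_{L^2}^4$. So the paper does not obtain your data-independent linear Gronwall inequality either. What it obtains is finiteness of $\|\bar\rho(t)\|_{H^s}$ on bounded time intervals, and that is all the continuation argument in the global-existence step uses.

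To repair your argument, recast the estimate as this induction, feeding in the lower-order bounds as known time-dependent coefficients.
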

\begin{proof}
First, we easily obtain $K*\bar{\rho}$ and $\sigma_{\alpha\alpha}*\bar{\rho}$ for all $\alpha$ are in $C(0,T; C^2( \mathbb{T}^d ) )$ since $K \in L^\infty$, $\sigma \in W^{2,\infty} $ and $\mathbb{T}^d=[0,1]^d$ is compact. The subsequent proof follows by induction on the order of derivatives.

\noindent{\bf\em Zeroth-order estimate.} We have 
\begin{equation*}
	\begin{aligned}
		\frac{\d}{\d t} \| \bar{\rho} \|^2_{L^2} = & \int_{ \mathbb{T}^d } 2\bar{\rho} \partial_t \bar{\rho} 
		= - 2 \int_{ \mathbb{T}^d } \bar{\rho} \nabla \cdot( ( K*\bar{\rho} ) \bar{\rho} ) + 2 \int_{ \mathbb{T}^d } \bar{\rho} \sum^d_{\alpha=1} \partial^2_{x^\alpha} \Big( \bar{\rho} \big( \sigma_{\alpha\alpha}*\bar{\rho} (x) \big)^2  \Big)\\
		=&2\int_{ \mathbb{T}^d } \bar{\rho} K*\rho \cdot \nabla \bar{\rho} - 2 \sum^d_{\alpha=1} \int_{ \mathbb{T}^d } \partial_{x^\alpha} \bar{\rho} \partial_{x^\alpha} \Big( \bar{\rho} \big( \sigma_{\alpha\alpha}*\bar{\rho}(x) \big)^2  \Big)\\	
		\leq & 2 (4\epsilon)^{-1} \| K*\bar{\rho} \|^2_{L^\infty} \| \bar{\rho} \|^2_{L^2} + 2 \epsilon \| \nabla \bar{\rho}\|^2_{L^2} - 2 \sum^d_{\alpha=1} \int_{ \mathbb{T}^d } \partial_{x^\alpha} \bar{\rho} \partial_{x^\alpha} \Big( \bar{\rho} \big( \sigma_{\alpha\alpha}*\bar{\rho}(x) \big)^2  \Big) \,,
	\end{aligned}
\end{equation*}
 where we have used Young’s inequality with $\epsilon$.
Note that 
\begin{equation*}
	\begin{aligned}
		- 2 \int_{ \mathbb{T}^d } & \partial_{x^\alpha} \bar{\rho} \partial_{x^\alpha} \Big( \bar{\rho} \big( \sigma_{\alpha\alpha}*\bar{\rho}(x) \big)^2  \Big) \\
		=& - 2 \int_{ \mathbb{T}^d } ( \partial_{x^\alpha} \bar{\rho} )^2 \big( \sigma_{\alpha\alpha}*\bar{\rho}(x) \big)^2	- 2 \int_{ \mathbb{T}^d } \partial_{x^\alpha} \bar{\rho} \partial_{x^\alpha} \big( \sigma_{\alpha\alpha}*\bar{\rho}(x) \big)^2 \bar{\rho} \\
		= & - 2 \int_{ \mathbb{T}^d } ( \partial_{x^\alpha} \bar{\rho})^2 \big( \sigma_{\alpha\alpha}*\bar{\rho}(x) \big)^2	- \int_{ \mathbb{T}^d } \partial_{x^\alpha} \bar{\rho}^2 \partial_{x^\alpha} \big( \sigma_{\alpha\alpha}*\bar{\rho}(x) \big)^2  \\
		= & - 2 \int_{ \mathbb{T}^d } ( \partial_{x^\alpha} \bar{\rho})^2 \big( \sigma_{\alpha\alpha}*\bar{\rho}(x) \big)^2	+ \int_{ \mathbb{T}^d } \bar{\rho}^2 \partial^2_{x^\alpha} \big( \sigma_{\alpha\alpha}*\bar{\rho}(x) \big)^2 \,.
	\end{aligned}
\end{equation*}
Hence
\begin{equation*}
	\begin{aligned}
		\frac{\d}{\d t} \| \bar{\rho}\|^2_{L^2} & + 2 \sum^d_{\alpha=1} \int_{ \mathbb{T}^d } (\partial_{x^\alpha} \bar{\rho})^2 \big( \sigma_{\alpha\alpha}*\bar{\rho}(x) \big)^2	\\
		= & 2 ( 4 \epsilon )^{-1} \| K*\bar{\rho} \|^2_{L^\infty} \| \bar{\rho} \|^2_{L^2} + 2 \epsilon \| \nabla \bar{\rho} \|^2_{L^2} + \sum^d_{\alpha=1} \int_{ \mathbb{T}^d } \bar{\rho}^2 \partial^2_{x^\alpha} \big( \sigma_{\alpha\alpha}*\bar{\rho}(x) \big)^2 \\
		\leq & 2 (4\epsilon)^{-1} \| K*\bar{\rho} \|^2_{L^\infty} \| \bar{\rho} \|^2_{L^2} + 2 \epsilon \| \nabla \bar{\rho} \|^2_{L^2} + \sum^d_{\alpha=1} \| \partial^2_{x^\alpha} \big( \sigma_{\alpha\alpha}*\bar{\rho}(x) \big)^2 \|_{L^\infty} \| \bar{\rho} \|^2_{L^2} \\
		\leq & 2 (4\epsilon)^{-1} \| K \|^2_{L^\infty} \| \bar{\rho} \|^2_{L^2} + 2 \epsilon \| \nabla \bar{\rho} \|^2_{L^2} + 4 d \| \sigma\|^2_{ W^{2,\infty} } \| \bar{\rho} \|^2_{L^2} \,,
	\end{aligned}
\end{equation*}
where
\begin{equation*}
	\begin{aligned}
		\| \partial^2_{x^\alpha} \big( \sigma_{\alpha\alpha}*\bar{\rho} \big)^2\|_{L^\infty} = & 2 \big\| ( \partial_{x^\alpha} \sigma_{\alpha\alpha}*\bar{\rho} )^2 + \sigma_{\alpha\alpha}*\bar{\rho} \partial^2_{x^\alpha} \sigma_{\alpha\alpha}*\bar{\rho} \big\|_{L^\infty} \\
		\leq & 2 \| \partial_{x^\alpha} \sigma_{\alpha\alpha} \|^2_{L^\infty} + 2 \| \sigma_{\alpha\alpha} \|_{L^\infty} \| \partial^2_{x^\alpha} \sigma_{\alpha\alpha} \|_{L^\infty}
		\leq 4 \|\sigma\|^2_{ W^{2,\infty} } \,.
	\end{aligned}
\end{equation*}
By Assumption \ref{Ass-A2}, we have
\begin{equation*}
	\begin{aligned}
		\frac{\d}{\d t} \| \bar{\rho}\|^2_{L^2} + ( 2(\underline{\sigma})^2 -2\epsilon) \|\nabla\bar{\rho}\|^2 _{L^2}
		\leq & \Big( 2 (4\epsilon)^{-1} \| K \|^2_{L^\infty} + 4d \| \sigma \|^2_{ W^{2,\infty} }  \Big) \| \bar{\rho} \|^2_{L^2} \leq C \| \bar{\rho} \|^2_{L^2} \,,
	\end{aligned}
\end{equation*}
where we  choose $\epsilon$ such that $\underline{\sigma}^2 > 2 \epsilon $.
Integrating the inequality above over $[0,t](t \leq T)$ yields that
\begin{equation}\label{Zeroth-order-estimate}
	\begin{aligned}
		\| \bar{\rho} \|^2_{L^2} \leq \| \bar{\rho}(0) \|^2_{L^2} e^{Ct} \leq \| \bar{\rho}_0 \|^2_{L^2} e^{Ct} \leq C_0 e^{C t} 
	\end{aligned}
\end{equation}
and
\begin{equation*}
	\begin{aligned}
		(  2( \underline{\sigma} )^2 - 2 \epsilon ) \int^t_0 \| \nabla \bar{\rho} \|^2_{L^2} \leq C \int^t_0 \| \bar{\rho}(s) \|^2_{L^2} \d s + \| \bar{\rho}(0) \|^2_{L^2} = \| \bar{\rho}_0 \|^2_{L^2} e^{C t} \,,
	\end{aligned}
\end{equation*}
i.e. 
\begin{equation}\label{Dissipation-zeroth-order-estimate}
	\begin{aligned}
		\int^t_0 \| \nabla \bar{\rho} \|^2_{L^2} \leq \frac{1}{ ( 2 \underline{ \sigma }^2 - 2 \epsilon ) } \| \bar{\rho}_0 \|^2_{L^2} e^{Ct} \leq C^d_0 e^{C t} \,.
	\end{aligned}
\end{equation}
\noindent{\bf\em First-order estimate.} We have 
\begin{equation*}
	\begin{aligned}
		\frac{\d}{\d t} & \| \partial_{x^i} \bar{\rho} \|^2_{L^2} = \int_{ \mathbb{T}^d } 2 \partial_{x^i} \bar{\rho} \partial_t \partial_{x^i} \bar{\rho} 
		\\
		= & - 2 \int_{ \mathbb{T}^d } \partial_{x^i} \bar{\rho} \nabla \cdot \partial_{x^i} ( ( K*\bar{\rho} ) \bar{\rho} ) +  2 \int_{ \mathbb{T}^d } \partial_{x^i} \bar{\rho} \sum^d_{\alpha=1} \partial^2_{x^\alpha} \partial_{x^i} \Big( \bar{\rho} \big( \sigma_{\alpha\alpha}*\bar{\rho}(x) \big)^2 \Big) \\
		= & 2 \int_{ \mathbb{T}^d } \partial_{x^i} \nabla \bar{\rho} \cdot \partial_{x^i} ( K*\bar{\rho}\bar{\rho} ) - 2 \int_{ \mathbb{T}^d } \partial_{x^i} \partial_{x^\alpha} \bar{\rho} \sum^d_{\alpha=1} \partial_{x^\alpha} \partial_{x^i} \Big( \bar{\rho} \big( \sigma_{\alpha\alpha}*\bar{\rho}(x) \big)^2 \Big) \\
		\leq & 2 \epsilon \int_{ \mathbb{T}^d }  | \partial_{x^i} \nabla \bar{\rho} |^2 + 2 ( 4 \epsilon)^{-1} \int_{ \mathbb{T}^d } | \partial_{x^i}( K*\bar{\rho} \bar{\rho} ) |^2 - 2 \sum^d_{\alpha=1} \int_{\mathbb{T}^d} \partial_{x^i} \partial_{x^\alpha} \bar{\rho}  \partial_{x^\alpha} \partial_{x^i} \Big( \bar{\rho} \big( \sigma_{\alpha\alpha}*\bar{\rho}(x) \big)^2 \Big) \,,
	\end{aligned}
\end{equation*}
 where we have used Young’s inequality with $\epsilon$. Note that
\begin{equation*}
	\begin{aligned}
		&\int_{ \mathbb{T}^d } | \partial_{x^i}(  K*\bar{\rho} \bar{\rho} ) |^2 =  \int_{ \mathbb{T}^d } | K*\partial_{x^i} \bar{\rho} \bar{\rho} + K*\bar{\rho} \partial_{x^i} \bar{\rho}|^2
		\leq 2 \int_{ \mathbb{T}^d } \Big( | K*\partial_{x^i} \bar{\rho} \bar{\rho} |^2 + |K*\bar{\rho} \partial_{x^i} \bar{\rho}  |^2  \Big)\\
		&\leq  2 \| K*\partial_{x^i} \bar{\rho} \|^2_{L^\infty} \| \bar{\rho} \|^2_{L^2} + 2 \| K*\bar{\rho} \|^2_{L^\infty} \|\partial_{x^i} \bar{\rho} \|^2_{L^2} 
		\leq  2 \| K \|^2_{L^\infty} \| \partial_{x^i} \bar{\rho} \|^2_{L^2} \| \bar{\rho} \|^2_{L^2} + 2 \| K \|^2_{L^\infty} \| \nabla \bar{\rho} \|^2_{L^2}
	\end{aligned}
\end{equation*}
and
\begin{equation*}
	\begin{aligned}
		\partial_{x^i} \partial_{x^\alpha} \bar{\rho} & \partial_{x^\alpha} \partial_{x^i} \Big( \bar{\rho} \big( \sigma_{\alpha\alpha}*\bar{\rho}(x) \big)^2 \Big) = ( \partial_{x^i} \partial_{x^\alpha}\bar{\rho} )^2 \big( \sigma_{\alpha\alpha}*\bar{\rho}(x) \big)^2 + \partial_{x^i} \partial_{x^\alpha} \bar{\rho} \partial_{x^\alpha} \bar{\rho} \partial_{x^i} \big( \sigma_{\alpha\alpha}*\bar{\rho}(x) \big)^2 \\
		& +  \partial_{x^i} \partial_{x^\alpha} \bar{\rho} \partial_{x^i}\bar{\rho} \partial_{x^\alpha} \big( \sigma_{\alpha\alpha}*\bar{\rho}(x) \big)^2 + \partial_{x^i} \partial_{x^\alpha} \bar{\rho} \bar{\rho} \partial_{x^\alpha} \partial_{x^i} \big( \sigma_{\alpha\alpha}*\bar{\rho}(x) \big)^2 \\
		& := | \partial_{x^i} \partial_{x^\alpha} \bar{\rho} |^2 \big( \sigma_{\alpha\alpha}*\bar{\rho}(x) \big)^2 + \partial_{x^i} \partial_{x^\alpha} \bar{\rho}(I_1) \,,
	\end{aligned}
\end{equation*}
where
\begin{equation*}
	\begin{aligned}
		I_1 := \partial_{x^\alpha} \bar{\rho} \partial_{x^i} \big( \sigma_{\alpha\alpha}*\bar{\rho}(x) \big)^2
		+ \partial_{x^i} \bar{\rho} \partial_{x^\alpha} \big( \sigma_{\alpha\alpha}*\bar{\rho}(x) \big)^2 + \bar{\rho} \partial_{x^\alpha} \partial_{x^i} \big( \sigma_{\alpha\alpha}*\bar{\rho}(x) \big)^2\,.
	\end{aligned}
\end{equation*}
Hence
\begin{equation*}
	\begin{aligned}
		\frac{\d}{\d t} & \| \partial_{x^i} \bar{\rho} \|^2_{L^2}
		+ 2 \sum^d_{\alpha=1} \int_{ \mathbb{T}^d } | \partial_{x^i} \partial_{x^\alpha} \bar{\rho} |^2 \big( \sigma_{\alpha\alpha}*\bar{\rho}(x) \big)^2 \\
		\leq & 2 \epsilon \int_{ \mathbb{T}^d }  | \partial_{x^i} \nabla \bar{\rho}|^2 + 4 (4\epsilon)^{-1} \| K \|^2_{L^\infty} \Big( \|\partial_{x^i} \bar{\rho}\|^2_{L^2} \| \bar{\rho} \|^2_{L^2} + \| \nabla \bar{\rho} \|^2_{L^2} \Big) + 2 \sum^d_{\alpha=1} \int_{\mathbb{T}^d} | \partial_{x^i} \partial_{x^\alpha} \bar{\rho} (I_1) |\\
		\leq & 2\epsilon \int_{ \mathbb{T}^d }  | \partial_{x^i} \nabla \bar{\rho} |^2 + 4 (4\epsilon)^{-1} \| K \|^2_{L^\infty} \Big( \| \partial_{x^i} \bar{\rho} \|^2_{L^2} \| \bar{\rho} \|^2_{L^2} + \| \nabla \bar{\rho} \|^2_{L^2} \Big) + 2 \epsilon \sum^d_{\alpha=1} \int_{\mathbb{T}^d } | \partial_{x^i,x^\alpha}\bar{\rho} |^2  \\
		& + 2(4\epsilon)^{-1} \sum^d_{\alpha=1} \int_{ \mathbb{T}^d } | I_1 |^2 \,.
	\end{aligned}
\end{equation*}
Note that
\begin{equation*}
	\begin{aligned}
		\int_{ \mathbb{T}^d } |I_1|^2
		\leq & 3\Big( \| \partial_{x^i} \big( \sigma_{\alpha\alpha}*\bar{\rho} \big)^2 \|^2_{L^\infty} \| \partial_{x^\alpha} \bar{\rho} \|^2_{L^2} + \| \partial_{x^\alpha} \big( \sigma_{\alpha\alpha}*\bar{\rho} \big)^2 \|^2_{L^\infty} \| \partial_{x^i} \bar{\rho} \|^2_{L^2} \\
		& \quad + \| \partial_{x^i, x^\alpha} \big( \sigma_{\alpha\alpha}*\bar{\rho} \big)^2 \|^2_{L^\infty} \| \bar{\rho} \|^2_{L^2} \Big) \\
		\leq & 3 \Big( 4 \| \sigma_{\alpha\alpha}\|^2_{L^\infty} \| \partial_{x^i} \sigma_{\alpha\alpha} \|^2_{L^\infty} \| \partial_{x^\alpha} \bar{\rho} \|^2_{L^2} + 4 \| \sigma_{\alpha\alpha} \|^2_{L^\infty} \| \partial_{x^\alpha} \sigma_{\alpha\alpha} \|^2_{L^\infty} \| \partial_{x^i} \bar{\rho} \|^2_{L^2} \\
		&+ 8 \big( \|\partial_{x^\alpha} \sigma_{\alpha\alpha} \|^2_{L^\infty} \| \partial_{x^i} \sigma_{\alpha\alpha} \|^2_{L^\infty} + \| \sigma_{\alpha\alpha} \|^2_{L^\infty} \| \partial_{x^\alpha} \sigma_{\alpha\alpha} \|^2_{L^\infty}  \big) \| \bar{\rho} \|^2_{L^2} \Big) \\
		\leq & 3 \Big( 4 \|\sigma\|^4_{ W^{2,\infty} }  \| \partial_{x^\alpha} \bar{\rho} \|^2_{L^2} + 4\|\sigma\|^4_{ W^{2,\infty} } \|\partial_{x^i} \bar{\rho} \|^2_{L^2} + 16 \|\sigma\|^4_{ W^{2,\infty} } \| \bar{\rho} \|^2_{L^2}  \Big) \\
		\leq & 48 \|\sigma\|^4_{ W^{2,\infty} } \Big( \|\partial_{x^\alpha} \bar{\rho} \|^2_{L^2} + \|\partial_{x^i} \bar{\rho} \|^2_{L^2} + \| \bar{\rho} \|^2_{L^2}  \Big)\,.
	\end{aligned}
\end{equation*}
Hence
\begin{equation*}
	\begin{aligned}
		\frac{\d}{\d t} &\| \partial_{x^i} \bar{\rho}\|^2_{L^2} + 2 \sum^d_{\alpha=1} \int_{\mathbb{T}^d} ( \partial_{x^i, x^\alpha} \bar{\rho} )^2 \big( \sigma_{\alpha\alpha}*\bar{\rho} \big)^2\\ 
		\leq & 4\epsilon \sum^d_{\alpha=1} \int_{\mathbb{T}^d} | \partial_{x^i, x^\alpha} \bar{\rho}|^2
		+ 4 (4\epsilon)^{-1} \| K \|^2_{L^\infty} \Big( \| \partial_{x^i} \bar{\rho} \|^2_{L^2} \| \bar{\rho} \|^2_{L^2} + \| \nabla \bar{\rho} \|^2_{L^2} \Big)\\
		& + 96 d (4\epsilon)^{-1} \| \sigma \|^4_{ W^{2,\infty}} \Big( \| \nabla\bar{\rho} \|^2_{L^2} + \|\partial_{x^i} \bar{\rho} \|^2_{L^2} + \| \bar{\rho} \|^2_{L^2}  \Big)\,.
	\end{aligned}
\end{equation*}
By Assumption \ref{Ass-A2}, we have
\begin{equation*}
	\begin{aligned}
		\frac{\d}{\d t} \| \partial_{x^i} \bar{\rho} \|^2_{L^2}  +   (  2 ( \underline{\sigma} )^2 - 4 \epsilon ) & \| \partial_{x^i} \nabla \bar{\rho} \|^2 _{L^2} 
		\leq  C_1 \| \partial_{x^i} \bar{\rho} \|^2_{L^2} ( \| \bar{\rho} \|^2_{L^2} + 1 ) + C_1 \| \nabla \bar{\rho} \|^2_{L^2} + C_1 \| \bar{\rho} \|^2_{L^2} \\
		\leq & C_1 \| \nabla\bar{\rho} \|^2_{L^2} \| \bar{\rho} \|^2_{L^2} + C_1 \| \nabla \bar{\rho} \|^2_{L^2} + C_1 \| \bar{\rho} \|^2_{L^2} \,,
	\end{aligned}
\end{equation*}
where we choose $\epsilon$ such that $\underline{\sigma}^2>2\epsilon$. By integrating the inequality above and using the bounds \eqref{Zeroth-order-estimate} and \eqref{Dissipation-zeroth-order-estimate}, we deduce
\begin{equation*}
	\begin{aligned}
		\| \partial_{x^i} \bar{\rho} \|^2_{L^2} & - \|\partial_{x^i} \bar{\rho}(0) \|^2_{L^2} + ( 2( \underline{\sigma} )^2 - 4\epsilon ) \int^t_0 \| \partial_{x^i} \nabla \bar{\rho} \|^2_{L^2} 
		\leq C_1 \| \bar{\rho}_0 \|^4_{L^2} e^{2C t} + C_1 \| \bar{\rho}_0 \|^2_{L^2} e^{C t}\,.
	\end{aligned}
\end{equation*}
 Hence
\begin{equation}\label{First-order-estimate}
	\begin{aligned}
		\| \partial_{x^i} \bar{\rho} \|^2_{L^2} \leq C_1 \| \bar{\rho}_0 \|^4_{L^2} e^{2Ct} + C_1 \| \bar{\rho}_0 \|^2_{L^2} e^{Ct} + \| \partial_{x^i} \bar{\rho}(0) \|^2_{L^2} \leq C_1 e^{2C t}
	\end{aligned}
\end{equation}
and
\begin{equation}\label{Dissipation-first-order-estimate}
	\begin{aligned}
		\int^t_0 \| \partial_{x^i}\nabla\bar{\rho} \|^2_{L^2} \leq \frac{1}{ ( 2( \underline{\sigma} )^2 - 4\epsilon ) } C_1 e^{2Ct} \leq C^d_1 e^{2Ct} \,.
	\end{aligned}
\end{equation}
\\
\noindent{\bf\em Second-order estimate.} We have 
\begin{equation*}
	\begin{aligned}
		\frac{\d}{\d t} & \| \partial_{x^i,x^j} \bar{\rho} \|^2_{L^2} =  \int_{ \mathbb{T}^d } 2 \partial_{x^i,x^j} \bar{\rho} \partial_t \partial_{x^i,x^j} \bar{\rho} \\
		= & - 2 \int_{ \mathbb{T}^d } \partial_{x^i,x^j} \bar{\rho} \nabla \cdot \partial_{x^i,x^j}( (K*\bar{\rho}) \bar{\rho} )+ 2\int_{ \mathbb{T}^d } \partial_{x^i,x^j} \bar{\rho} \sum^d_{\alpha=1} \partial_{x^i,x^j} \partial^2_{x^\alpha} \Big( \bar{\rho} \big( \sigma_{\alpha\alpha}*\bar{\rho}(x) \big)^2 \Big) \\
		= & 2 \int_{ \mathbb{T}^d } \partial_{x^i,x^j} \nabla\bar{\rho} \cdot  \partial_{x^i,x^j} ( K*\bar{\rho} \bar{\rho} )- 2 \int_{ \mathbb{T}^d } \partial_{x^i,x^j,x^\alpha} \bar{\rho} \sum^d_{\alpha=1} \partial_{x^i,x^j,x^\alpha} \Big( \bar{\rho} \big( \sigma_{\alpha\alpha}*\bar{\rho}(x) \big)^2 \Big) \\
		\leq & 2\epsilon \int_{ \mathbb{T}^d } | \partial_{x^i,x^j}\nabla\bar{\rho} |^2 + 2 (4\epsilon)^{-1} \int_{ \mathbb{T}^d } |  \partial_{x^i,x^j} ( K*\bar{\rho}\bar{\rho} ) |^2 \\
		&- 2 \sum^d_{\alpha=1} \int_{ \mathbb{T}^d } \partial_{x^i,x^j,x^\alpha} \bar{\rho}  \partial_{x^i,x^j,x^\alpha} \Big( \bar{\rho} \big( \sigma_{\alpha\alpha}*\bar{\rho}(x) \big)^2 \Big)\,,
	\end{aligned}
\end{equation*}
where we have used Young’s inequality with $\epsilon$. Note that
\begin{equation*}
	\begin{aligned}
		\int_{ \mathbb{T}^d }& | \partial_{x^i,x^j} ( K*\bar{\rho}\bar{\rho} )|^2 = \int_{ \mathbb{T}^d } | K*\partial_{x^i,x^j} \bar{\rho} \bar{\rho} + K*\partial_{x^j}\bar{\rho} \partial_{x^i} \bar{\rho} + K*\partial_{x^j} \bar{\rho}\partial_{x^i} \bar{\rho} + K*\bar{\rho} \partial_{x^i,x^j}\bar{\rho}  |^2 \\
		\leq & 4 \int_{ \mathbb{T}^d } \Big( | K*\partial_{x^i,x^j} \bar{\rho} \bar{\rho} |^2 + | K*\partial_{x^j} \bar{\rho} \partial_{x^i} \bar{\rho} |^2 + | K*\partial_{x^j} \bar{\rho} \partial_{x^i} \bar{\rho} |^2 + |K*\bar{\rho} \partial_{x^i,x^j}\bar{\rho} |^2 \Big) \\
		\leq & 4 \Big( \| K\|^2_{L^\infty} \|\partial_{x^i,x^j} \bar{\rho}\|^2_{L^1} \|\bar{\rho}\|^2_{L^2} + \|K\|^2_{L^\infty} \| \partial_{x^j} \bar{\rho} \|^2_{L^1} \| \partial_{x^i} \bar{\rho} \|^2_{L^2} \\
		&+ \| K \|^2_{L^\infty} \| \partial_{x^j} \bar{\rho} \|^2_{L^1} \| \partial_{x^i} \bar{\rho} \|^2_{L^2} + \|K\|^2_{L^\infty} \|\partial_{x^i,x^j} \bar{\rho}\|^2_{L^2} \Big) \\
		\leq & 4 \|K\|^2_{L^\infty} \Big( \| \partial_{x^i,x^j} \bar{\rho} \|^2_{L^2} \|\bar{\rho}\|^2_{L^2} + \| \partial_{x^j} \bar{\rho} \|^2_{L^2} \| \partial_{x^i} \bar{\rho} \|^2_{L^2} + \| \partial_{x^j} \bar{\rho}\|^2_{L^2} \| \partial_{x^i} \bar{\rho} \|^2_{L^2} + \| \partial_{x^i,x^j} \bar{\rho} \|^2_{L^2} \Big)\,.
	\end{aligned}
\end{equation*}
Note also that
\begin{equation*}
	\begin{aligned}
		& \partial_{x^i,x^j,x^\alpha} \bar{\rho}  \partial_{x^i,x^j,x^\alpha} \Big( \bar{\rho} \big( \sigma_{\alpha\alpha}*\bar{\rho}(x) \big)^2 \Big)\\
		= & | \partial_{x^i,x^j,x^\alpha}\bar{\rho} |^2 \big( \sigma_{\alpha\alpha}*\bar{\rho}(x) \big)^2 + \partial_{x^i,x^j,x^\alpha} \bar{\rho} \Big( \partial_{x^j,x^\alpha}\bar{\rho} \partial_{x^i} \big( \sigma_{\alpha\alpha}*\bar{\rho}(x) \big)^2 + \partial_{x^i,x^\alpha} \bar{\rho} \partial_{x^j} \big( \sigma_{\alpha\alpha}*\bar{\rho}(x) \big)^2 \\
		& + \partial_{x^\alpha} \bar{\rho} \partial_{x^i, x^j} \big( \sigma_{\alpha\alpha}*\bar{\rho}(x) \big)^2 +\partial_{x^i,x^j} \bar{\rho} \partial_{x^\alpha} \big( \sigma_{\alpha\alpha}*\bar{\rho}(x) \big)^2 +  \partial_{x^j} \bar{\rho} \partial_{x^i, x^\alpha} \big( \sigma_{\alpha\alpha}*\bar{\rho}(x) \big)^2\\
		&+ \partial_{x^i} \bar{\rho} \partial_{ x^j,x^\alpha} \big( \sigma_{\alpha\alpha}*\bar{\rho}(x) \big)^2 + \bar{\rho} \partial_{x^i, x^j,x^\alpha} \big( \sigma_{\alpha\alpha}*\bar{\rho}(x) \big)^2 \Big) \\
		:= & | \partial_{x^i,x^j,x^\alpha}\bar{\rho} |^2 \big( \sigma_{\alpha\alpha}*\bar{\rho}(x) \big)^2 + \partial_{x^i,x^j,x^\alpha} \bar{\rho} (I_2)\,,
	\end{aligned}
\end{equation*}
where
\begin{equation*}
	\begin{aligned}
		I_2 := & \partial_{x^j,x^\alpha} \bar{\rho} \partial_{x^i} \big( \sigma_{\alpha\alpha}*\bar{\rho}(x) \big)^2 + \partial_{x^i,x^\alpha} \bar{\rho} \partial_{x^j} \big( \sigma_{\alpha\alpha}*\bar{\rho}(x) \big)^2 + \partial_{x^\alpha} \bar{\rho} \partial_{x^i, x^j} \big( \sigma_{\alpha\alpha}*\bar{\rho}(x) \big)^2 \\
		&+\partial_{x^i,x^j} \bar{\rho} \partial_{x^\alpha} \big( \sigma_{\alpha\alpha}*\bar{\rho}(x) \big)^2 +  \partial_{x^j} \bar{\rho} \partial_{x^i, x^\alpha} \big( \sigma_{\alpha\alpha}*\bar{\rho}(x) \big)^2 \\
		&+ \partial_{x^i} \bar{\rho} \partial_{ x^j,x^\alpha}\big( \sigma_{\alpha\alpha}*\bar{\rho} (x) \big)^2 + \bar{\rho} \partial_{x^i, x^j,x^\alpha} \big( \sigma_{\alpha\alpha}*\bar{\rho}(x) \big)^2 \,.
	\end{aligned}
\end{equation*}
Hence
\begin{equation*}
	\begin{aligned}
		\frac{\d}{\d t} & \|\partial_{x^i,x^j}\bar{\rho}\|^2_{L^2}  + 2 \sum^d_{\alpha=1} \int_{\mathbb{T}^d} | \partial_{x^i,x^j,x^\alpha}\bar{\rho} |^2 \big( \sigma_{\alpha\alpha}*\bar{\rho} (x) \big)^2 \leq 2\epsilon \int_{ \mathbb{T}^d } | \partial_{x^i,x^j}\nabla\bar{\rho} |^2 \\
		& + 8 (4\epsilon)^{-1} \|K\|^2_{L^\infty} \Big( \| \partial_{x^i,x^j} \bar{\rho} \|^2_{L^2} \| \bar{\rho} \|^2_{L^2} + \| \partial_{x^j} \bar{\rho} \|^2_{L^2} \| \partial_{x^i} \bar{\rho}\|^2_{L^2} + \| \partial_{x^j} \bar{\rho} \|^2_{L^2} \| \partial_{x^i} \bar{\rho} \|^2_{L^2} \\
		& + \| \partial_{x^i,x^j} \bar{\rho} \|^2_{L^2} \Big) + 2 \sum^d_{\alpha=1} \int_{ \mathbb{T}^d } | \partial_{x^i,x^j,x^\alpha} \bar{\rho}  (I_2) | \\
		\leq & 2\epsilon \int_{ \mathbb{T}^d } | \partial_{x^i,x^j} \nabla \bar{\rho} |^2 + 8 (4\epsilon)^{-1} \|K\|^2_{L^\infty} \Big( \|\partial_{x^i,x^j} \bar{\rho} \|^2_{L^2} \| \bar{\rho} \|^2_{L^2} + \| \partial_{x^j} \bar{\rho} \|^2_{L^2} \|\partial_{x^i} \bar{\rho}\|^2_{L^2} \\
		& + \| \partial_{x^j} \bar{\rho} \|^2_{L^2} \| \partial_{x^i} \bar{\rho} \|^2_{L^2} + \| \partial_{x^i,x^j} \bar{\rho} \|^2_{L^2} \Big) + 2\epsilon \sum^d_{\alpha=1} \int_{ \mathbb{T}^d } | \partial_{x^i,x^j,x^\alpha}\bar{\rho}|^2 + 2 (4\epsilon)^{-1} \sum^d_{\alpha=1} \int_{ \mathbb{T}^d } |I_2|^2 \,.
	\end{aligned}
\end{equation*}
Note that
\begin{equation*}
	\begin{aligned}
		\int_{ \mathbb{T}^d } |I_2|^2
		\leq & + 7 \Big( \| \partial_{x^i} \big( \sigma_{\alpha\alpha}*\bar{\rho}(x) \big)^2 \|^2_{L^\infty} \| \partial_{x^j,x^\alpha} \bar{\rho} \|^2_{L^2} + \| \partial_{x^j} \big( \sigma_{\alpha\alpha}*\bar{\rho}(x) \big)^2 \|^2_{L^\infty} \| \partial_{x^i,x^\alpha} \bar{\rho} \|^2_{L^2} \\
		& + \| \partial_{x^i, x^j} \big( \sigma_{\alpha\alpha}*\bar{\rho}(x) \big)^2 \|^2_{L^\infty} \| \partial_{x^\alpha} \bar{\rho} \|^2_{L^2} +\| \partial_{x^\alpha} \big( \sigma_{\alpha\alpha}*\bar{\rho}(x) \big)^2\|^2_{L^\infty}\|\partial_{x^i,x^j} \bar{\rho}\|^2_{L^2}\\
		& + \|\partial_{x^i, x^\alpha} \big( \sigma_{\alpha\alpha}*\bar{\rho}(x) \big)^2 \|^2_{L^\infty} \|\partial_{x^j} \bar{\rho} \|^2_{L^2} + \| \partial_{ x^j,x^\alpha} \big( \sigma_{\alpha\alpha}*\bar{\rho}(x) \big)^2 \|^2_{L^\infty} \| \partial_{x^i} \bar{\rho} \|^2_{L^2} \\
		& + \| \partial_{x^i, x^j,x^\alpha} \big( \sigma_{\alpha\alpha}*\bar{\rho}(x) \big)^2 \|^2_{L^\infty}\| \bar{\rho} \|^2_{L^2} \Big)\,.
	\end{aligned}
\end{equation*}
Note also that
\begin{equation*}
	\begin{aligned}
		\| \partial_{x^i} \big( \sigma_{\alpha\alpha}*\bar{\rho}(x) \big)^2 \|^2_{L^\infty} \leq &  4 \| \sigma_{\alpha\alpha} \|^2_{L^\infty} \| \partial_{x^i} \sigma_{\alpha\alpha} \|^2_{L^\infty}
		\leq 4 \|\sigma\|^4_{ W^{2,\infty} } \,,\\
		\| \partial_{x^j} \big( \sigma_{\alpha\alpha}*\bar{\rho}(x) \big)^2 \|^2_{L^\infty} \leq & 4 \| \sigma_{\alpha\alpha} \|^2_{L^\infty} \| \partial_{x^j} \sigma_{\alpha\alpha} \|^2_{L^\infty} \leq 4 \|\sigma\|^4_{ W^{2,\infty} } \,, \\ 
		\| \partial_{x^i, x^j} \big( \sigma_{\alpha\alpha}*\bar{\rho}(x) \big)^2 \|^2_{L^\infty}
		\leq & 8 ( \| \partial_{x^i} \sigma_{\alpha\alpha} \|^2_{L^\infty} \| \partial_{x^j} \sigma_{\alpha\alpha} \|^2_{L^\infty} + \|\sigma_{\alpha\alpha} \|^2_{L^\infty} \| \partial_{x^i,x^j} \sigma_{\alpha\alpha} \|^2_{L^\infty} ) \\
		\leq & 16 \|\sigma\|^4_{ W^{2,\infty} }  \,, \\
		\| \partial_{x^\alpha} \big( \sigma_{\alpha\alpha}*\bar{\rho}(x) \big)^2\|^2_{L^\infty} \leq &  4 \| \sigma_{\alpha\alpha} \|^2_{L^\infty} \| \partial_{x^\alpha} \sigma_{\alpha\alpha} \|^2_{L^\infty}
		\leq 4 \|\sigma\|^4_{ W^{2,\infty} }  \\
		\|\partial_{x^i, x^\alpha} \big( \sigma_{\alpha\alpha}*\bar{\rho}(x) \big)^2 \|^2_{L^\infty}\leq & 8 ( \| \partial_{x^\alpha} \sigma_{\alpha\alpha} \|^2_{L^\infty} \| \partial_{x^i} \sigma_{\alpha\alpha} \|^2_{L^\infty} +  \| \sigma_{\alpha\alpha} \|^2_{L^\infty} \| \partial_{x^i,x^\alpha} \sigma_{\alpha\alpha} \|^2_{L^\infty} ) \\
		\leq & 16 \| \sigma \|^4_{ W^{2,\infty} } \,, \\
		\| \partial_{ x^j,x^\alpha} \big( \sigma_{\alpha\alpha}*\bar{\rho}(x) \big)^2 \|^2_{L^\infty} \leq & 8 ( \| \partial_{x^\alpha} \sigma_{\alpha\alpha} \|^2_{L^\infty} \| \partial_{x^j} \sigma_{\alpha\alpha} \|^2_{L^\infty} +  \| \sigma_{\alpha\alpha} \|^2_{L^\infty} \| \partial_{x^j,x^\alpha} \sigma_{\alpha\alpha} \|^2_{L^\infty} ) \\
		\leq & 16 \| \sigma \|^4_{ W^{2,\infty} } \,, 
			\end{aligned}
	\end{equation*}
\begin{equation*}
\begin{aligned}
		\| \partial_{x^i, x^j,x^\alpha} \big( \sigma_{\alpha\alpha}*\bar{\rho}(x) \big)^2 \|^2_{L^\infty} \leq & 16 \Big( \| \partial_{x^j,x^i} \sigma_{\alpha\alpha} \|^2_{L^\infty} \| \partial_{x^\alpha} \sigma_{\alpha\alpha} \|^2_{L^\infty} + \| \partial_{x^j} \sigma_{\alpha\alpha} \|^2_{L^\infty} \| \partial_{x^\alpha,x^i} \sigma_{\alpha\alpha} \|^2_{L^\infty} \\
		+ \| \partial_{x^i} & \sigma_{\alpha\alpha} \|^2_{L^\infty} \| \partial_{x^\alpha,x^j} \sigma_{\alpha\alpha} \|^2_{L^\infty} + \| \sigma_{\alpha\alpha} \|^2_{L^\infty} \| \partial_{x^i,x^j} \sigma_{\alpha\alpha} \|^2_{L^\infty} \| \partial_{x^\alpha} \bar{\rho} \|^2_{L^1} \Big )\\
		\leq & 48 \|\sigma\|^4_{ W^{2,\infty} } + 16 \| \sigma \|^4_{ W^{2,\infty} } \| \partial_{x^\alpha} \bar{\rho} \|^2_{L^2} \,,
	\end{aligned}
\end{equation*}
i.e.
\begin{equation*}
	\begin{aligned}
		\| \partial_{x^i}\big( \sigma_{\alpha\alpha}*\bar{\rho}(x) \big)^2 \|^2_{L^\infty}
		\leq & 4 \|\sigma\|^4_{ W^{2,\infty} } \,, \ 
		\| \partial_{x^j} \big( \sigma_{\alpha\alpha}*\bar{\rho}(x) \big)^2 \|^2_{L^\infty} \leq  4 \| \sigma \|^4_{ W^{2,\infty} } \,,\\ 
		\| \partial_{x^i, x^j} \big( \sigma_{\alpha\alpha}*\bar{\rho}(x) \big)^2 \|^2_{L^\infty} \leq & 
		16 \| \sigma \|^4_{ W^{2,\infty} }\,, \ \| \partial_{x^\alpha} \big( \sigma_{\alpha\alpha}*\bar{\rho}(x) \big)^2 \|^2_{L^\infty} \leq  4 \| \sigma \|^4_{ W^{2,\infty} } \,,\\ 
			\| \partial_{ x^j,x^\alpha} \big( \sigma_{\alpha\alpha}*\bar{\rho}(x) \big)^2 \|^2_{L^\infty}
		\leq  &16 \| \sigma \|^4_{ W^{2,\infty} }\,, \
		\| \partial_{ x^i,x^\alpha} \big( \sigma_{\alpha\alpha}*\bar{\rho}(x) \big)^2 \|^2_{L^\infty}
		\leq 16 \| \sigma \|^4_{ W^{2,\infty} } \,, \\
		\| \partial_{x^i, x^j,x^\alpha} \big( \sigma_{\alpha\alpha}*\bar{\rho}(x) \big)^2 \|^2_{L^\infty}
		\leq & 48 \|\sigma\|^4_{ W^{2,\infty} } + 16 \| \sigma \|^4_{ W^{2,\infty} } \| \partial_{x^\alpha} \bar{\rho} \|^2_{L^2} \,.
	\end{aligned}
\end{equation*}
Hence
\begin{equation*}
	\begin{aligned}
		&\frac{\d}{\d t} \| \partial_{x^i,x^j} \bar{\rho}\|^2_{L^2} + 2 \sum^d_{\alpha=1} \int_{\mathbb{T}^d} | \partial_{x^i,x^j,x^\alpha} \bar{\rho}|^2 \big( \sigma_{\alpha\alpha}*\bar{\rho}(x) \big)^2 \leq 4\epsilon \sum^d_{\alpha=1} \int_{\mathbb{T}^d} | \partial_{x^i,x^j,x^\alpha} \bar{\rho}|^2 \\
		& + 8 (4\epsilon)^{-1} \|K\|^2_{L^\infty} \Big( \|\partial_{x^i,x^j} \bar{\rho} \|^2_{L^2} \| \bar{\rho} \|^2_{L^2} + \| \partial_{x^j} \bar{\rho} \|^2_{L^2} \|\partial_{x^i} \bar{\rho} \|^2_{L^2} + \| \partial_{x^j} \bar{\rho}\|^2_{L^2} \|\partial_{x^i} \bar{\rho} \|^2_{L^2} + \|\partial_{x^i,x^j} \bar{\rho} \|^2_{L^2} \Big)\\
		& + 672 (4\epsilon)^{-1}\|\sigma\|^4_{ W^{2,\infty} }  \sum^d_{\alpha=1} \Big( \|\partial_{x^j,x^\alpha}\bar{\rho} \|^2_{L^2} + \|\partial_{x^i,x^\alpha} \bar{\rho} \|^2_{L^2} + \|\partial_{x^\alpha} \bar{\rho} \|^2_{L^2} + \|\partial_{x^i,x^j} \bar{\rho} \|^2_{L^2}+  \| \partial_{x^j} \bar{\rho}  \|^2_{L^2}\\ 
		&\qquad\qquad\qquad\qquad + \| \partial_{x^i} \bar{\rho}  \|^2_{L^2}  + \| \bar{\rho} \|^2_{L^2} + \|\partial_{x^\alpha} \bar{\rho}\|^2_{L^2} \| \bar{\rho} \|^2_{L^2} \Big)\,.
	\end{aligned}
\end{equation*}
By Assumption \ref{Ass-A2}, we have
\begin{equation*}
	\begin{aligned}
		\frac{\d}{\d t} & \| \partial_{x^i,x^j} \bar{\rho} \|^2_{L^2} + ( 2 \underline{\sigma}^2 - 4\epsilon ) \sum^d_{\alpha=1} \int_{ \mathbb{T}^d } | \partial_{x^i,x^j,x^\alpha} \bar{\rho} |^2 \\
		\leq & C_2 \big( \|\partial_{x^i,x^j} \bar{\rho}\|^2_{L^2} ( \|\bar{\rho} \|^2_{L^2} + 1 ) + \| \partial_{x^i} \bar{\rho} \|^2_{L^2} \| \partial_{x^j} \bar{\rho} \|^2_{L^2} + \| \partial_{x^j} \nabla \bar{\rho} \|^2_{L^2} + \| \partial_{x^i} \nabla \bar{\rho} \|^2_{L^2} \big)\\
		& + C_2 \big( \| \nabla \bar{\rho} \|^2_{L^2} + \| \bar{\rho} \|^2_{L^2} + \| \nabla \bar{\rho} \|^2_{L^2} \| \bar{\rho} \|^2_{L^2} \big) \\
		\leq & C_2 \big( \| \partial_{x^i} \nabla \bar{\rho} \|^2_{L^2} \|\bar{\rho}\|^2_{L^2} + \| \nabla \bar{\rho} \|^4_{L^2} + \| \partial_{x^j} \nabla \bar{\rho} \|^2_{L^2} + \| \partial_{x^i} \nabla \bar{\rho} \|^2_{L^2} \big)\\
		& + C_2 \big( \| \nabla \bar{\rho} \|^2_{L^2} + \| \bar{\rho} \|^2_{L^2} + \| \nabla \bar{\rho} \|^2_{L^2} \| \bar{\rho} \|^2_{L^2}  \big)\,,
	\end{aligned}
\end{equation*}
where we choose $\epsilon$ such that $ \underline{\sigma}^2 > 2\epsilon $.
By integrating the inequality above and using the bounds \eqref{Zeroth-order-estimate}, \eqref{First-order-estimate} and \eqref{Dissipation-first-order-estimate}, we deduce
\begin{equation*}
	\begin{aligned}
		\| \partial_{x^i,x^j} \bar{\rho} \|^2_{L^2} - \| \partial_{x^i,x^j} \bar{\rho}(0) \|^2_{L^2} & + ( 2( \underline{\sigma} )^2 - 4\epsilon ) \sum^d_{\alpha=1} \int^t_0 \int_{ \mathbb{T}^d } ( \partial_{x^i,x^j,x^\alpha}\bar{\rho} )^2 
		\leq C_2 e^{4Ct} \,.
	\end{aligned}
\end{equation*}
Hence
\begin{equation*}
	\begin{aligned}
		\| \partial_{x^i,x^j} \bar{\rho} \|^2_{L^2} \leq C_2 e^{4Ct}\,, \ \ \int^t_0 \int_{ \mathbb{T}^d } \| \partial_{x^i,x^j} \nabla \bar{\rho} \|^2_{L^2} \leq \frac{1}{ (  2 ( \underline{\sigma} )^2 - 4\epsilon )} C_2 e^{4Ct} := C^d_2 e^{4Ct}\,.
	\end{aligned}
\end{equation*}

Proceeding by induction on the order of derivatives and repeating the above argument, we obtain that for all $k\leq s$ and for all $\beta_1,\cdots,\beta_k\in\{1,\cdots,d\}$, 
\begin{equation*}\label{order-s-1-estimate}
	\begin{aligned}
		\|\partial_{\beta_1,\cdots,\beta_k}\bar{\rho}\|^2_{L^2} \leq C_k e^{2kCt}
	\end{aligned}
\end{equation*}
and
\begin{equation*}\label{Dissipation-order-s-1-estime}
	\begin{aligned}
		\int^t_0\int_{\mathbb{T}^d} \| \partial_{\beta_1,\cdots,\beta_k}\nabla\bar{\rho} \|^2_{L^2} \leq \frac{1}{(  2(\underline{\sigma})^2 -4\epsilon )}C_k e^{2kCt}:=C^d_k e^{2kCt}\,,
	\end{aligned}
\end{equation*}
where $C_k=C_k(k,d,\underline{\sigma},\|K\|_{L^\infty},\|\sigma\|_{W^{2,\infty}})$ and $C^d_k=C^d_k(k,d,\underline{\sigma},\|K\|_{L^\infty},\|\sigma\|_{W^{2,\infty}} )$.
Combining the estimates for all orders of derivatives, we obtain the desired result.

\end{proof}

\subsection{Global existence of classical solutions.} This part of the proof follows a classical iterative argument. 
\vspace{2mm}

\textbf{\emph{Step 1: The basic result of an iterative scheme.}} We devise the iterative scheme for \eqref{Limit-Eq} as follows
\begin{equation}\label{Iterative-scheme-limit-Eq}
	\begin{cases}
			\partial_t \bar{\rho}^n + \nabla \cdot ( V^{n-1} \bar{\rho}^n  )
		= \sum^d_{\alpha=1} \partial^2_{x^\alpha} ( U^{n-1}_{\alpha} \bar{\rho}^n )\,,\\
		V^{n} = V( \bar{\rho}^n ) = K*\bar{\rho}^n \,, U^{n}_{\alpha} = U_\alpha ( \bar{\rho}^n ) = ( \sigma_{\alpha\alpha}*\bar{\rho}^n )^2 \\
		\bar{\rho}^n(0,\cdot) = \bar{\rho}^n_0 = \bar{\rho}_0 *\eta_\varepsilon \,, \ n \geq 0 \,,
	\end{cases}
\end{equation}
where  $\eta_\varepsilon(x)$ is the standard mollifier so that $\bar{\rho}^n(0) \to \bar{\rho}_0$ in $H^s( \mathbb{T}^d )$ and  we assume $\bar{\rho}^{-1} := 0 $ in the first iteration. Observe that \eqref{Iterative-scheme-limit-Eq} is linear for each $n$ and $\bar{\rho}^n(0) \in C^\infty( \mathbb{T}^d )$, we have
\begin{lemma}
	For a finite $T > 0$, the system \eqref{Iterative-scheme-limit-Eq} has a sequence of $C^\infty([0,T] \times \mathbb{T}^d )$ solutions $\{ \bar{\rho}^n, V^n, U^n_{\alpha} \}$ for all $\alpha$ with $\inf \bar{\rho}^n > 0$ and $\int_{\mathbb{T}^d} \bar{\rho}^n \d x = 1 $. 
\end{lemma}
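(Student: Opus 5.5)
We prove the lemma by induction on $n$, using at each step the classical theory of linear parabolic equations with smooth coefficients on the torus. For $n=0$ we have $\bar{\rho}^{-1}=0$, so $V^{-1}=0$ and $U^{-1}_\alpha=0$. This makes the first equation degenerate: it reduces to $\partial_t\bar{\rho}^0=0$, whose solution is $\bar{\rho}^0(t)=\bar{\rho}_0*\eta_\varepsilon$. This is $C^\infty$, strictly positive because $\inf\bar{\rho}_0>0$ and $\eta_\varepsilon\ge0$ has unit mass, and of mass one. Since $\bar{\rho}^0$ is a probability density, $V^0=K*\bar{\rho}^0$ and $U^0_\alpha=(\sigma_{\alpha\alpha}*\bar{\rho}^0)^2$ are smooth. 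Moreover $U^0_\alpha\ge\underline{\sigma}^2$, because $\sigma_{\alpha\alpha}*\bar{\rho}^0\ge\underline{\sigma}\int\bar{\rho}^0=\underline{\sigma}$. (Had one preferred to start the scheme from the heat equation, the argument below would apply unchanged.)

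\emph{Inductive step.} Assume $\bar{\rho}^{n-1}\in C^\infty([0,T]\times\mathbb{T}^d)$ is a positive probability density at each time. Then $V^{n-1}=K*\bar{\rho}^{n-1}$ is $C^\infty$, since derivatives fall on $\bar{\rho}^{n-1}$ and $K\in L^\infty$. Likewise $U^{n-1}_\alpha$ is $C^\infty$, with $U^{n-1}_\alpha\ge\underline{\sigma}^2>0$ by Assumption \ref{Ass-A2}. Expanding the right-hand side, the equation for $\bar{\rho}^n$ becomes
\[
\partial_t\bar{\rho}^n=\sum_\alpha U^{n-1}_\alpha\partial^2_{x^\alpha}\bar{\rho}^n+b\cdot\nabla\bar{\rho}^n+c\,\bar{\rho}^n .
\]
This is a linear, uniformly parabolic equation with diagonal principal part and smooth coefficients $b,c$. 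The standard Schauder and Galerkin theory on the compact manifold $\mathbb{T}^d$ then gives a unique classical solution. Because the data $\bar{\rho}_0*\eta_\varepsilon$ are $C^\infty$, bootstrapping (differentiate the equation and apply energy or Schauder estimates at each order) shows the solution lies in $C^\infty([0,T]\times\mathbb{T}^d)$.

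\emph{Mass and positivity.} Since the equation is in divergence form, integrating over $\mathbb{T}^d$ gives $\frac{d}{dt}\int\bar{\rho}^n=0$, so $\int\bar{\rho}^n=1$. For positivity we apply the parabolic maximum principle to the non-divergence form above. Setting $w=e^{-\lambda t}\bar{\rho}^n$ with $\lambda\ge\|c\|_{L^\infty}$ shows $\min_x\bar{\rho}^n(t)\ge e^{-\|c\|_\infty t}\inf\bar{\rho}_0*\eta_\varepsilon>0$, so on the finite interval $[0,T]$ we get $\inf\bar{\rho}^n>0$. This closes the induction and yields smoothness of $V^n$ and $U^n_\alpha$.

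The only delicate point is bounding $c=-\nabla\cdot V^{n-1}+\sum_\alpha\partial^2_{x^\alpha}U^{n-1}_\alpha$ in $L^\infty$, which the maximum principle requires. This holds because the derivatives can be placed on $\bar{\rho}^{n-1}$ (for $K$) or on $\sigma$ (which lies in $W^{2,\infty}$). Note that this bound is allowed to depend on $n$ here, since uniformity in $n$ is not claimed in the lemma.
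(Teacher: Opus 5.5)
Your proof is correct and takes the same route as the paper: induction on $n$, linear parabolic theory with smooth coefficients for existence and $C^\infty$ regularity, a maximum principle after exponential rescaling in time for positivity, and integration of the divergence form for conservation of mass. You are more careful than the paper in three places: you check uniform parabolicity via $U^{n-1}_\alpha\ge\underline{\sigma}^2$, you treat the degenerate step $n=0$ explicitly, and you give a genuine strictly positive lower bound where the paper only rules out negative minima; that bound comes most directly from $e^{\lambda t}\bar{\rho}^n$ (a supersolution of a zeroth-order-free equation once $\bar{\rho}^n\ge 0$), not from $e^{-\lambda t}\bar{\rho}^n$ as you wrote.
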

\begin{proof}
	We use induction on $n$. The case $n=0$ is clear. Assume the solutions $\{ \bar{\rho}^{i}, V^{i-1}, U^{i-1}_{\alpha}\}$ for $ 0 \leq i \leq n $ and $ 1 \leq \alpha \leq d $ are $C^\infty([0,T] \times \mathbb{T}^d )$. Since $K \in L^\infty$, $\sigma \in W^{2,\infty}$ and $\mathbb{T}^d$ is compact, we easily obtain $V^n$ and $U^n_{\alpha}$ for all $\alpha$ are $C^\infty( [0,T] \times \mathbb{T}^d )$. By the standard linear solvability theory \cite[Chapter 7, Section 7.1]{Eva2010}, the equation \eqref{Iterative-scheme-limit-Eq} has a unique solution $\bar{\rho}^n\in C^\infty([0,T]\times \mathbb{T}^d ) $. We now proof $\inf \bar{\rho}^n > 0$. Rewrite \eqref{Iterative-scheme-limit-Eq} as
	$$\partial_t \bar{\rho}^n - \sum^d_{\alpha=1} U^{n-1}_{\alpha} \partial^2_{ x^\alpha } \bar{\rho}^n + \sum^d_{\alpha=1}( V^{n-1}_\alpha - 2 \partial_{x^\alpha}
	U^{n-1}_\alpha ) \partial_{x^\alpha} \bar{\rho}^n + \sum^d_{\alpha=1 }(\partial_{x^\alpha} V^{n-1} - \partial^2_{\alpha} U^{n-1}_\alpha ) \bar{\rho}^n = 0 \,.$$
Let $\bar{\rho}^n = \tilde{\rho}^n e^{ C t }$, then
	$$\partial_t \tilde{\rho}^n -\sum^d_{\alpha=1} U^{n-1}_{\alpha} \partial^2_{ x^\alpha } \tilde{\rho}^n + \sum^d_{\alpha=1}( V^{n-1}_\alpha - 2 \partial_{x^\alpha}
U^{n-1}_\alpha ) \partial_{x^\alpha} \tilde{\rho}^n + \sum^d_{\alpha=1 }(\partial_{x^\alpha} V^{n-1} - \partial^2_{\alpha} U^{n-1}_\alpha + C ) \tilde{\rho}^n = 0 \,.$$
We choose $C$ such that $C< \sum^d_{\alpha=1 }( \partial^2_{\alpha} U^{n-1}_\alpha - \partial_{x^\alpha} V^{n-1} )$. By the maximum principle, 
	 $\tilde{\rho}^n$ does not attain a negative minimum for $t>0$. Note that
	$\inf \tilde{\rho}^n_0 = \inf \bar{\rho}^n_0 > 0$, so
	\begin{equation}\label{bar-rho-n-nonnegative}
		\inf \bar{\rho}^n > 0\,.
	\end{equation}
Rewriting the equation in \eqref{Iterative-scheme-limit-Eq} as
$$	\partial_t \bar{\rho}^n + \sum^d_{\alpha=1} \partial_{x^\alpha} \Big( ( V^{n-1}_\alpha \bar{\rho}^n ) - \sum^d_{\alpha=1} \partial_{x^\alpha} ( U^{n-1}_\alpha \bar{\rho}^n ) \Big)
= 0 $$
and integrating  over $x$ directly implies the conservation of mass. 
\end{proof}

\textbf{\emph{Step 2: Uniform bound of $\bar{\rho}^n $.}} 
We now show that there exists a sufficiently small $T_*>0$ such that $\|\bar{\rho}^n\|^2_{H^s}$ is uniformly bounded on the interval $[0,T_*]$. We again use induction on $n$. If $n=0$, we have $\bar{\rho}^0(t)=\bar{\rho}_0$, then for all $T\geq 0$
$$	\sup_{ 0 \leq t \leq T } \| \bar{\rho}^0(t) \|^2_{ H^s } \leq 2\| \bar{\rho}_0 \|^2_{ H^s } \,.$$
Assume that there exists a fixed $T_*>0$ such that for any $t\leq T_*$
\begin{equation}\label{A-rho-n-1}
	\sup_{ 0 \leq t \leq T_* } \| \bar{\rho}^{n-1} \|^2_{ H^s } \leq 2 \| \bar{\rho}_0 \|^2_{ H^s }\,. 
\end{equation}
 Applying similar computations as in Lemma \ref{A-priori-H^s-estimates} to the equation \eqref{Iterative-scheme-limit-Eq}, we have 
\begin{equation*}
	\begin{aligned}
		\frac{\d}{\d t} \|\bar{\rho}^n\|^2_{H^s}  + (2\underline{\sigma}^2-4\epsilon)  \|\nabla \bar{\rho}^n\|^2_{H^s} 
		\leq C\|\bar{\rho}^{n-1}\|^2_{H^s}\|\bar{\rho}^n\|^2_{H^s}\,,
	\end{aligned}
\end{equation*}
where $C=C(s,d,\|K\|_\infty,\|\sigma\|_{W^{2,\infty}})$.
 Integrating the above inequality over $[0,t]$ for $t\leq T_0\leq T_*$, where $T_0$ is to be determined, yields 
 \begin{equation*}
 \|\bar{\rho}^n\|^2_{H^s} \leq \| \bar{\rho}_0 \|^2_{ H^s } + C t \sup_{ 0 \leq s \leq t }\|\bar{\rho}^{n-1}(s)\|^2_{H^s} \sup_{ 0 \leq s \leq t } \|\bar{\rho}^n(s)\|^2_{H^s} \,.
 \end{equation*}
 Then
 \begin{equation*}
 	\sup_{ 0 \leq t \leq T_0 } \|\bar{\rho}^n(t)\|^2_{H^s} \leq \| \bar{\rho}_0 \|^2_{ H^s } + C T_0 \sup_{ 0 \leq t \leq T_0 }\|\bar{\rho}^{n-1}(t)\|^2_{H^s} \sup_{ 0 \leq t \leq T_0 } \|\bar{\rho}^n(t)\|^2_{H^s} \,.
 \end{equation*}
 By \eqref{A-rho-n-1}, we have
\begin{equation*}
	\sup_{ 0 \leq t \leq T_0 } \|\bar{\rho}^n(t)\|^2_{H^s} \leq \| \bar{\rho}_0 \|^2_{ H^s } + 2C T_0\|\bar{\rho}_0\|^2_{H^s} \sup_{ 0 \leq t \leq T_0 } \|\bar{\rho}^n(t)\|^2_{H^s} \,.
\end{equation*}
 i.e.
 \begin{equation*}
 	\sup_{ 0 \leq t \leq T_0 } \|\bar{\rho}^n\|^2_{H^s} \leq \frac{ \| \bar{\rho}_0 \|^2_{ H^s}  }{1-C T_0\|\bar{\rho}_0\|^2_{H^s} }\,.
 \end{equation*}
 By choosing  $T_0$ such that $ 1 - C T_0\|\bar{\rho}_0\|^2_{H^s} \geq 1/2$, i.e. $ T_0\leq  \frac{1}{2C\|\bar{\rho}_0\|^2_{H^s}}$, one can deduce that 
 \begin{equation*}
 	\sup_{ 0 \leq t \leq T_0 } \|\bar{\rho}^n\|^2_{H^s}  \leq 2\|\bar{\rho}_0\|^2_{H^s}\,.
 \end{equation*}
Taking $T_*=T_0$ gives
 \begin{equation}\label{Uniform-bound-bar-rho-n}
 	\sup_{ 0 \leq t \leq T_* } \| \bar{\rho}^n \|^2_{ H^s } \leq 2 \| \bar{\rho}_0 \|^2_{ H^s } \,.
 \end{equation}

\textbf{\emph{Step 3: Convergence of $\bar{\rho}^n$ in $C ( 0, T_*; L^2 ( \mathbb{T}^d ) )$.}} It follows from \eqref{Iterative-scheme-limit-Eq} that
	\begin{equation*}
		\begin{aligned}
			& \partial_t ( \bar{\rho}^{ n + 1 } - \bar{\rho}^n ) + \nabla \cdot \big( V^n ( \bar{\rho}^{ n + 1 } - \bar{\rho}^n ) \big) + \nabla \cdot \big(   V ( \bar{\rho}^n - \bar{\rho}^{ n - 1 } ) \bar{\rho}^n \big) \\
			& = \sum^d_{\alpha=1} \partial^2_{x^\alpha} \big( U^n_\alpha ( \bar{\rho}^{ n + 1 } - \bar{\rho}^n ) \big) + \sum^d_{\alpha=1} \partial^2_\alpha \big[ \big(  U^n_\alpha - U^{n-1}_\alpha \big) \bar{\rho}^n \big]\,.
		\end{aligned}
	\end{equation*}
Let $G^{ n + 1 } = \bar{\rho}^{ n + 1 } - \bar{\rho}^n $. Then
	\begin{equation*}
		\begin{aligned}
			& \partial_t G^{ n + 1 }  + \nabla \cdot \big( V^n G^{ n + 1 } \big) + \nabla \cdot \big( V (G^n ) \bar{\rho}^n \big) = \sum^d_{\alpha=1} \partial^2_{x^\alpha} \big( U^n_\alpha  G^{ n + 1 }  \big) + \sum^d_{\alpha=1} \partial^2_{x^\alpha} \big[ \big( U^n_\alpha - U^{n-1}_\alpha \big) \bar{\rho}^n \big]\,.
		\end{aligned}
	\end{equation*}
Hence
\begin{equation*}
	\begin{aligned}
		\frac{\d}{\d t} & \| G^{n+1} \|^2_{L^2} =  2 \int_{ \mathbb{T}^d } G^{n+1} \partial_t G^{n+1}
		\\
		= & - 2\int_{\mathbb{T}^d} G^{n+1} \nabla \cdot \big( V( \bar{\rho}^n ) G^{ n + 1 } \big) - 2 \int_{ \mathbb{T}^d } G^{n+1} \nabla \cdot \big( V (G^n ) \bar{\rho}^n \big) \\
		& + 2\sum^d_{\alpha=1} \int_{ \mathbb{T}^d } G^{n+1} \partial^2_{x^\alpha} \big( U^n_\alpha G^{ n + 1 } \big) + 2 \sum^d_{\alpha=1} \int_{ \mathbb{T}^d } G^{n+1} \partial^2_{x^\alpha} \big[ \big( U^n_\alpha - U^{n-1}_\alpha \big) \bar{\rho}^n \big]\\
		= & 2\int_{ \mathbb{T}^d } G^{n+1} V^n \cdot \nabla G^{ n + 1 } + 2\int_{ \mathbb{T}^d } V (G^n ) \bar{\rho}^n \cdot \nabla G^{n+1} \\
		& - 2 \sum^d_{\alpha=1} \int_{ \mathbb{T}^d } \partial_{x^\alpha} G^{n+1} \partial_{x^\alpha} \big( U^n_\alpha G^{ n + 1 } \big) + 2 \sum^d_{\alpha=1} \int_{ \mathbb{T}^d } G^{n+1} \partial^2_{x^\alpha} \big[ \big( U^n_\alpha - U^{n-1}_\alpha \big) \bar{\rho}^n \big]\\
		\leq & 2 (4\epsilon)^{-1} \| V^n \|^2_{L^\infty} \| G^{n+1} \|^2_{L^2} + 2\epsilon \| \nabla G^{ n + 1 }\|^2_{L^2} + 2 (4\epsilon)^{-1} \| V ( G^n ) \bar{\rho}^n \|^2_{L^2} + 2 \epsilon \|\nabla G^{n+1} \|^2_{L^2} \\
		&- 2 \sum^d_{\alpha=1} \int_{ \mathbb{T}^d } \partial_{x^\alpha} G^{n+1} \partial_{x^\alpha} \big( U^n_\alpha  G^{ n + 1 } \big) + d \| G^{n+1} \|^2_{L^2} + \sum^d_{\alpha=1} \| \partial^2_{x^\alpha} \big[ \big( U^n_\alpha - U^{n-1}_\alpha \big) \bar{\rho}^n \big] \|^2_{L^2} \,,
	\end{aligned}
\end{equation*}
where we have used Young’s inequality with $\epsilon$. Note that
\begin{equation*}
	\begin{aligned}
		-& 2\int_{ \mathbb{T}^d } \partial_{x^\alpha} G^{n+1} \partial_{x^\alpha} \big( U^n_\alpha G^{ n + 1 } \big) = -2\int_{ \mathbb{T}^d } (\partial_{x^\alpha} G^{n+1})^2 U^n_{\alpha} -2\int_{ \mathbb{T}^d } \partial_{x^\alpha} G^{n+1} \partial_{x^\alpha} U^n_\alpha G^{n+1}\\
		= & - 2 \int_{ \mathbb{T}^d } ( \partial_{x^\alpha} G^{n+1} )^2 U^n_{\alpha} -\int_{ \mathbb{T}^d } \partial_{x^\alpha} (G^{n+1})^2 \partial_{x^\alpha} U^n_\alpha \\
		= & - 2 \int_{ \mathbb{T}^d } ( \partial_{x^\alpha} G^{n+1} )^2 U^n_{\alpha} + \int_{ \mathbb{T}^d }  ( G^{n+1} )^2 \partial^2_{x^\alpha} U^n_\alpha \,.
	\end{aligned}
\end{equation*}
Hence
\begin{equation*}
	\begin{aligned}
		\frac{\d}{\d t} & \| G^{n+1} \|^2_{L^2} + 2 \sum^d_{\alpha=1} \int_{ \mathbb{T}^d } ( \partial_{x^\alpha} G^{n+1} )^2 U^n_{\alpha} \\
		= & 2 (4\epsilon)^{-1} \| V^n \|^2_{L^\infty} \| G^{n+1} \|^2_{L^2} + 2 \epsilon \| \nabla G^{ n + 1 }\|^2_{L^2} + 2 (4\epsilon)^{-1} \| V ( G^n ) \bar{\rho}^n \|^2_{L^2} + 2\epsilon \|\nabla G^{n+1} \|^2_{L^2} \\
		& + \sum^d_{\alpha=1} \int_{ \mathbb{T}^d } (G^{n+1})^2 \partial^2_{x^\alpha} U^n_\alpha + d \| G^{n+1} \|^2_{L^2} + \sum^d_{\alpha=1} \| \partial^2_{x^\alpha} \big[ \big( U^n_\alpha - U^{n-1}_\alpha \big) \bar{\rho}^n \big] \|^2_{L^2}\\
		\leq & 2 (4\epsilon)^{-1} \|K\|^2_{L^\infty} \| G^{n+1} \|^2_{L^2} + 2\epsilon \| \nabla G^{n+1} \|^2_{L^2} + 2 (4\epsilon)^{-1} \|K\|^2_{L^\infty} \|G^n\|^2_{L^2}\|\bar{\rho}^n\|^2_{L^2} + 2 \epsilon \| \nabla G^{n+1}\|^2_{L^2}\\
		& + \sum^d_{\alpha=1} \| \partial^2_{x^\alpha} U^n_\alpha \|_{L^\infty} \| G^{n+1} \|^2_{L^2} + d \| G^{n+1} \|^2_{L^2} + \sum^d_{\alpha=1} \| \partial^2_{x^\alpha} \big[ \big( U^n_\alpha - U^{n-1}_\alpha \big) \bar{\rho}^n \big] \|^2_{L^2}\\
		\leq & 2 (4\epsilon)^{-1} \|K\|^2_{L^\infty} \| G^{n+1} \|^2_{L^2} + 2\epsilon \| \nabla G^{n+1} \|^2_{L^2} + 2 (4\epsilon)^{-1} \| K \|^2_{L^\infty} \| G^n \|^2_{L^2} \| \bar{\rho}^n \|^2_{L^2} + 2\epsilon \| \nabla G^{n+1} \|^2_{L^2}\\
		& + 2d \| \sigma \|^2_{W^{2,\infty}} \| G^{n+1} \|^2_{L^2} + d \| G^{n+1} \|^2_{L^2} + \sum^d_{\alpha=1} \| \partial^2_{x^\alpha} \big[ \big( U^n_\alpha - U^{n-1}_\alpha \big) \bar{\rho}^n \big] \|^2_{L^2}\,,
	\end{aligned}
\end{equation*}
where we use
\begin{equation*}
	\begin{aligned}
		\| \partial^2_{x^\alpha} U^n_\alpha \|_{L^\infty} = & \| \partial^2_{x^\alpha} \big( \sigma_{\alpha\alpha}*\bar{\rho}^n \big)^2 \|_{L^\infty} = 2 \big\| ( \partial_{x^\alpha} \sigma_{\alpha\alpha}*\bar{\rho}^n )^2 + \sigma_{\alpha\alpha}*\bar{\rho}^n \partial^2_{x^\alpha} \sigma_{\alpha\alpha}*\bar{\rho}^n \big\|_{L^\infty} \\
		\leq & 2 \|\partial_{x^\alpha} \sigma_{\alpha\alpha} \|^2_{L^\infty} + 2 \| \sigma_{\alpha\alpha} \|_{L^\infty} \| \partial^2_{x^\alpha} \sigma_{\alpha\alpha} \|_{L^\infty}
		\leq 2 \|\sigma\|^2_{ W^{2,\infty} } \,.
	\end{aligned}
\end{equation*}
Note that
\begin{equation*}
	\begin{aligned}
	\| &\partial^2_{x^\alpha} \big[ \big( U^n_\alpha - U^{n-1}_\alpha \big) \bar{\rho}^n \big] \|^2_{L^2}=\int_{ \mathbb{T}^d } |  \partial^2_{x^\alpha} \big[ \big( U^n_\alpha - U^{n-1}_\alpha \big) \bar{\rho}^n \big] |^2\\
		\leq & 3\int_{ \mathbb{T}^d } \Big( |  \partial^2_{x^\alpha} \big( U^n_\alpha - U^{n-1}_\alpha \big) \bar{\rho}^n |^2 + 4 | \partial_{x^\alpha} \big( U^n_\alpha - U^{n-1}_\alpha \big) \partial_{x^\alpha} \bar{\rho}^n |^2 + | \big( U^n_\alpha - U^{n-1}_\alpha \big)   \partial^2_{x^\alpha} \bar{\rho}^n |^2 \Big)\\
		\leq & 3 \| \partial^2_{x^\alpha} \big( U^n_\alpha - U^{n-1}_\alpha \big)\|^2_{L^\infty}\|\bar{\rho}^n\|^2_{L^2} + 12 \| \partial_{x^\alpha} \big( U^n_\alpha - U^{n-1}_\alpha \big)\|^2_{L^\infty}\|\partial_{x^\alpha} \bar{\rho}^n\|^2_{L^2} \\
		&+ 3 \| U^n_\alpha - U^{n-1}_\alpha \|^2_{L^\infty} \| \partial^2_{x^\alpha} \bar{\rho}^n \|^2_{L^2}\,.
	\end{aligned}
\end{equation*}
By the bound \eqref{Uniform-bound-bar-rho-n} and the fact that for $k=0,1,2$
\begin{equation*}
	\begin{aligned}
    \partial^k_{x^\alpha} \big( U^n_\alpha - U^{n-1}_\alpha \big) = &  \partial^k_{x^\alpha} \big[ \big( \sigma_{\alpha\alpha}*(\bar{\rho}^n + \bar{\rho}^{n-1}) \big)(t,x) \big( \sigma_{\alpha\alpha}*(\bar{\rho}^n - \bar{\rho}^{n-1} ) \big)(t,x) \big] \\
		= & \sum_{0 \leq j \leq k }\tbinom{k}{j} \big( \partial^{ k-j }_{x^\alpha} \sigma_{\alpha\alpha}*( \bar{\rho}^n + \bar{\rho}^{n-1} ) \big)(t,x) \big( \partial^j_{x^\alpha} \sigma_{\alpha\alpha}*(\bar{\rho}^n - \bar{\rho}^{n-1} ) \big) (t,x) \,,
	\end{aligned}
\end{equation*}
we obtain
\begin{equation*}
	\begin{aligned}
		\| \partial^2_{x^\alpha} \big[ \big( U^n_\alpha - U^{n-1}_\alpha \big) \bar{\rho}^n \big] \|^2_{L^2}
		\leq  C\|G^n\|^2_{L^2}\,,
	\end{aligned}
\end{equation*}
where $C = C( \| \sigma \|_{ W^{2,\infty} }, \|\bar{\rho}_0\|_{H^s} )$.
Hence
\begin{equation*}
	\begin{aligned}
		\frac{\d}{\d t} & \| G^{n+1} \|^2_{L^2} + 2 \sum^d_{\alpha=1} \int_{ \mathbb{T}^d } ( \partial_{x^\alpha} G^{n+1})^2 U^n_{\alpha} \\
		\leq & 2 (4\epsilon)^{-1} \| K \|^2_{L^\infty} \| G^{n+1} \|^2_{L^2} + 2\epsilon \| \nabla G^{n+1} \|^2_{L^2} + 2(4\epsilon)^{-1} \| K \|^2_{L^\infty} \| G^n \|^2_{L^2} \| \bar{\rho}^n \|^2_{L^2} + 2 \epsilon \| \nabla G^{n+1} \|^2_{L^2} \\
		& + 2d \|\sigma\|^2_{ W^{2,\infty} } \| G^{n+1} \|^2_{L^2} + d \| G^{n+1} \|^2_{L^2} + dC \| G^n \|^2_{L^2} \,.
	\end{aligned}
\end{equation*}
By Assumption \ref{Ass-A2} and the bound \eqref{Uniform-bound-bar-rho-n}, we have
\begin{equation}\label{Ineq-G-n+1-L2}
	\begin{aligned}
		\frac{\d}{\d t} \| G^{n+1} \|^2_{L^2} + ( 2 \underline{\sigma}^2 - 4\epsilon ) \| \nabla G^{n+1} \|^2_{L^2}
		\leq C_1 \| G^{n+1} \|^2_{L^2} + C_2 \| G^n \|^2_{L^2} \,,
	\end{aligned}
\end{equation}
where $C_1 = C_1(d,\|\bar{\rho}_0\|_{H^s},\|K\|_{L^\infty},\|\sigma\|_{W^{2,\infty}}) $, $C_2 = C_2(d,\|\bar{\rho}_0\|_{H^s},\|K\|_{L^\infty},\|\sigma\|_{W^{2,\infty}})$ and we choose $\epsilon$ such that $\underline{\sigma}^2 > 2\epsilon $.
 Using the Gronwall’s inequality to the above equation, it easily follows that
	\begin{equation*}
		\| G^{ n + 1 } \|^2_{L^2} \leq C_2e^{C_1 t }\int^t_0 \| G^n \|^2_{L^2} \d s  \leq C_2e^{C_1 T_* }\int^t_0 \| G^n \|^2_{L^2} \d s := \tilde{ C} \int^t_0 \| G^n \|^2_{L^2} \d s \,.
	\end{equation*}
	Thus the inductive method gives
	\begin{equation*}
		\| \bar{\rho}^{ n + 1 } - \bar{\rho}^n \|^2_{L^2} \leq \dfrac{ \tilde{C}^{n+1} t^{n+1} }{ (n+1)! } \max_{t \in [0, T_*]}\| \bar{\rho}^1 - \bar{\rho}^0 \|^2_{L^2}\,. 
	\end{equation*}
	Then there exists a unique limit $\bar{\rho}$ such that
	\begin{equation*}\label{Limit-L2}
		\bar{\rho}^n \to \bar{\rho} \text{ in } C ( 0, T_*; L^2( \mathbb{T}^d) ) \text{ as } \ n\to\infty.
	\end{equation*}
We incidentally provide here two properties of $\bar{\rho}$. one is $\inf \bar{\rho} > 0$. Because $\bar{\rho}^n$ converges to $\bar{\rho}$  in $C ( 0, T_*; L^2( \mathbb{T}^d) )$. This means that $\bar{\rho} \geq \bar{\rho}^n - \varepsilon$ for any $\varepsilon > 0$, and therefore $\inf \bar{\rho} > 0$ by \eqref{bar-rho-n-nonnegative}. The other is
	\begin{equation*}
		\| \bar{\rho} \|^2_{L^\infty(  0,T_*; H^s(\mathbb{T}^d) ) } \leq 2 \| \bar{\rho}_0 \|^2_{ H^s } \,.
	\end{equation*}
	Indeed, the bound \eqref{Uniform-bound-bar-rho-n} indicates that there exists a subsequence $\bar{\rho}^{n_j}$ of $\bar{\rho}^n$ that converges weakly to $\bar{\rho}$ in $L^\infty \{ [0,T_*]; H^s ( \mathbb{T}^d ) ) \}$.
	the lower semicontinuity of norm gives that
	\begin{equation*}
		\| \bar{\rho} \|^2_{L^\infty(  0,T_*;H^s( \mathbb{T}^d) ) } \leq \liminf_{ j \to \infty } \| \bar{\rho}^{n_j} \|^2_{L^\infty(  0,T_*;H^s( \mathbb{T}^d ) ) } \leq 2 \| \bar{\rho}_0 \|^2_{ H^s } \,. 
	\end{equation*}
	For brevity, we will still denote $n_j$ by $n$. 
	
	\textbf{\emph{Step 4: Local existence of classical solutions.}} From the above discussion, the Sobolev interpolation inequality $\|u\|_{H^r} \leq \Vert u \Vert ^{1-r/s}_{L^2} \Vert u \Vert^{r/s}_{H^s}$ for $0\leq r\leq s$ implies that 
	\begin{equation*}
	\bar{\rho}^{n} \to \bar{\rho} \textrm{ in } C ( 0, T_* ; H^r ( \mathbb{T}^d ) ) \,, \textrm{ as } n \to \infty,
	\end{equation*}
	for $0 \leq r < s$. If one choose $r \in (d/2+2,s)$, the Sobolev imbedding theorem yields
	\begin{equation*}
		\bar{\rho}^{n} \to \bar{\rho} \textrm{ in } C \{ [0, T_* ]; C^2 ( \mathbb{T}^d ) ) \} \,, \textrm{ as } n \to \infty \,.
	\end{equation*}
From the equation in \eqref{Iterative-scheme-limit-Eq}, we further deduce $\partial_t \bar{\rho}^n \to \partial_t \bar{\rho}$ in $C \{ [0, T_* ]; C ( \mathbb{T}^d ) ) \} $. These show $\bar{\rho}$ is a classical solution to \eqref{Limit-Eq}. Let $\bar{\rho}_1$ and $\bar{\rho}_2$ be two solutions to \eqref{Limit-Eq} with initial value $\bar{\rho}_0$. Uniqueness then follows from a standard argument similar to the one that yields \eqref{Ineq-G-n+1-L2}.

\textbf{\emph{Step 5: Global existence of classical solutions.}} Let $T_{max}$ denote the maximal existence time of the solution. By the local existence result, there exists $T_*>0$ such that a unique solution exists on $[0,T_*)$, hence $0<T_*\leq T_{max}$. We claim that $T_{max}=\infty$. Assume to the contrary that $T_{max}<\infty$. Then the a priori estimate (Lemma \ref{A-priori-H^s-estimates}) implies that
$$ \| \bar{\rho}(T_{max}) \|_{H^s}<\infty \,.$$
Now, taking $\bar{\rho}(T_{max})$ as the initial data, the above local existence result yields a solution on $[0, T_{max}+\delta)$ for some $\delta>0$. This contradicts the definition of $T_{max}$ as the maximal existence time. Therefore $T_{max}=\infty$ and the solution is global.

\section{ Proof of Theorem \ref{Thm-entropy-solution-Liou} }\label{sec-proof-Thm-weak-solution-Liou-Eq}

In this section, we prove the existence of weak solutions to \eqref{Liouville-Eq} with initial data $\rho^N_0$ by the classical approximation argument, and further establish the entropy inequality \ref{Entropy- dissipation-inequality} required for the  previous proof. This argument is inspired by the proof in \cite[Proposition 1]{JW2018} and the treatment of the Landau master equation in \cite{CG2025, FW2025}.

Let $\eta_\varepsilon(x) := \varepsilon^{-d} \eta(x/\varepsilon) $ be a sequence of standard mollifiers such that 
$ \int_{\mathbb{T}^d} \eta_\varepsilon(x) \d x = 1 $ and $\eta_\varepsilon \in C^\infty_c( \mathbb{T}^d )$. We consider 
$$K^\varepsilon = K*\eta_\varepsilon \,, \  \sigma^\varepsilon = \sigma*\eta_\varepsilon \,, \ \rho^{0,\varepsilon}_N = \rho^0_N*\eta^N_\varepsilon(X) = \rho^0_N*\Pi^N_{i=1} \eta_\varepsilon(x_i) \,.$$
By the standard linear solvability theory \cite[Chapter 7, Section 7.1]{Eva2010}, there exists a unique solution $\rho^\varepsilon_N \in C^\infty( \mathbb{T}^{dN} )$ for all $ t \geq 0$ and $\varepsilon > 0$ to 
\begin{equation}\label{Regularized-liou-eq}
	\begin{aligned}
		\partial_t \rho^\varepsilon_N & + \sum^N_{i=1} \nabla_{x_i} \cdot \Big( \rho^\varepsilon_N \Big( \dfrac{1}{N} \sum \limits_{k = 1}^N K^\varepsilon( x_i-x_k ) \Big) \Big)
		= \sum^N_{i=1} \sum^d_{\alpha=1} \partial^2_{x^\alpha_i} \Big( \rho^\varepsilon _N \dfrac{1}{N^2} \Big( \sum\limits_{ k=1 }^N \sigma^\varepsilon_{\alpha\alpha}( x_i-x_k ) \Big)^2 \Big)
	\end{aligned}
\end{equation}
with initial data $ \rho^\varepsilon_N(0) = \rho^{0,\varepsilon}_N $.
 It is easy to obtain $ \rho^\varepsilon_N \geq 0 $ and 
\begin{equation*}\label{mass-conservation-liou-eq}
	\| \rho^\varepsilon_N \|_{L^\infty(0,T,L^1(\mathbb{T}^{dN}))} = \| \rho^{0,\varepsilon}_N \|_{L^1(\mathbb{T}^{dN})} = \| \rho^0_N \|_{ L^1( \mathbb{T}^{dN} ) }\,.
\end{equation*}
We next derive several useful estimates for the regularized equation \eqref{Regularized-liou-eq}.

\vspace{2mm}

\noindent{\emph{Uniform estimates for
	entropy.}} From the equation \eqref{Regularized-liou-eq}, we have that
\begin{equation*}
	\begin{aligned}
	\frac{\d}{\d t} \int_{ \mathbb{T}^{dN} } \rho^\varepsilon_N \log \rho^\varepsilon_N \d X = & \int_{ \mathbb{T}^{dN} } \partial_t \rho^\varepsilon_N ( 1 + \log \rho^\varepsilon_N )\d X = \int_{ \mathbb{T}^{dN} } \partial_t \rho^\varepsilon_N \log \rho^\varepsilon_N \d X \\
	= & - \dfrac{1}{N} \sum\limits^N_{i=1} \sum^d_{\alpha=1} \int_{ \mathbb{T}^{dN} } \partial_{x^\alpha_i} \Big( \sum^N_{k=1} K^\varepsilon_\alpha ( x_i-x_k ) \rho^\varepsilon_N \Big) \log \rho^\varepsilon_N \d X \\
	&+ \frac{1}{N^2} \sum\limits^N_{i=1} \sum^d_{\alpha=1} \int_{ \mathbb{T}^{dN} } \partial^2_{x^\alpha_i} \Big( \rho^\varepsilon_N \Big( \sum\limits_{ k=1 }^N \sigma^\varepsilon_{\alpha\alpha}( x_i-x_k ) \Big)^2 \Big) \log \rho^\varepsilon_N \d X \,.
	\end{aligned}
\end{equation*}
By integration by parts
\begin{equation}\label{Entropy-Time-Evolution}
\begin{aligned}
\frac{\d}{\d t} & \int_{ \mathbb{T}^{dN} } \rho^\varepsilon_N \log \rho^\varepsilon_N \d X + \frac{1}{N^2} \sum\limits^N_{i=1} \sum^d_{\alpha=1} \int_{ \mathbb{T}^{dN} } \Big( \sum\limits_{k = 1}^N \sigma^\varepsilon_{\alpha\alpha}( x_i-x_k ) \Big)^2 \frac{ ( \partial_{x^\alpha_i} \rho^\varepsilon_N )^2 }{ \rho^\varepsilon_N } \d X \\
= & \dfrac{1}{N} \sum\limits^N_{i=1} \sum^N_{k=1} \int_{\mathbb{T}^{dN}} K^\varepsilon ( x_i-x_k ) \cdot \nabla \rho^\varepsilon_N + \frac{1}{N^2} \sum\limits^N_{i=1} \sum^d_{\alpha=1} \int_{ \mathbb{T}^{dN} } \partial^2_{x^\alpha_i} \Big( \sum\limits_{ k=1 }^N \sigma^\varepsilon_{\alpha\alpha}( x_i-x_k ) \Big)^2 \rho^\varepsilon_N \d X\,.
\end{aligned}
\end{equation}
By Assumption \ref{Ass-A2} and the definition of $\sigma^\varepsilon$, the above equation simplifies to
\begin{equation*}
\begin{aligned}
\frac{\d}{\d t} \int_{ \mathbb{T}^{dN} } & \rho^\varepsilon_N \log \rho^\varepsilon_N \d X + \underline{\sigma}^2 \sum\limits^N_{i=1} \int_{ \mathbb{T}^{dN} } \frac{ | \nabla \rho^\varepsilon_N |^2 }{ \rho^\varepsilon_N } \d X\\
& \leq \dfrac{1}{N} \sum\limits^N_{i=1} \sum^N_{k=1} \int_{ \mathbb{T}^{dN} }   K^\varepsilon( x_i-x_k ) \cdot \nabla \rho^\varepsilon_N \d X
+ 4Nd \| \sigma \|^2_{ W^{2,\infty} } \,.
\end{aligned}
\end{equation*}
Also, observing that $ \| K^\varepsilon \|_{L^\infty} \leq \|K\|_{L^\infty} $ and applying Young’s inequality to the first term on the rhs of the above equation, we get
\begin{equation*}
\begin{aligned}
\dfrac{1}{N} & \sum\limits^N_{i=1} \sum^N_{k=1} \int_{ \mathbb{T}^{dN} } K^\varepsilon ( x_i-x_k ) \cdot \nabla \rho^\varepsilon_N \d X \leq \| K\|_{L^\infty} \sum^N_{i=1} \int_{ \mathbb{T}^{dN} } | \nabla \rho^\varepsilon_N |\d X\\
\leq & \frac{ \underline{\sigma}^2 }{2} \sum^N_{i=1} \int_{ \mathbb{T}^{dN} } \frac{ | \nabla \rho^\varepsilon_N |^2 }{ \rho^\varepsilon } \d X + \frac{ N \|K\|^2_{L^\infty} }{ 2 \underline{\sigma}^2 } \,.
\end{aligned}
\end{equation*}
Hence
\begin{equation*}
\begin{aligned}
\frac{\d}{\d t} \int_{ \mathbb{T}^{dN} } \rho^\varepsilon_N \log \rho^\varepsilon_N \d X & + \underline{\sigma}^2 \sum \limits^N_{i=1} \int_{ \mathbb{T}^{dN} } \frac{ | \nabla \rho^\varepsilon_N |^2 }{ \rho^\varepsilon_N } \d X \\
\leq & \frac{ \underline{\sigma}^2 }{2} \sum^N_{i=1} \int_{ \mathbb{T}^{dN} } \frac{ |\nabla \rho^\varepsilon_N |^2 }{ \rho^\varepsilon } \d X + \frac{N \|K\|^2_{L^\infty} }{ 2\underline{\sigma}^2 } + 4Nd \|\sigma\|^2_{ W^{2,\infty} } \,.
\end{aligned}
\end{equation*}
Note that $	| \nabla_{x_i} \rho^\varepsilon_N |^2 / \rho^\varepsilon_N = 4 | \nabla_{x_i} \sqrt{\rho^\varepsilon_N} |^2$, so
\begin{equation*}\label{Uniform-estimate-entropy}
	\begin{aligned}
		\int_{ \mathbb{T}^{dN} } \rho^\varepsilon_N \log \rho^\varepsilon_N (X) \d X & + 2 \underline{\sigma}^2 \sum\limits^N_{i=1} \int^t_0 \int_{ \mathbb{T}^{dN} }  | \nabla_{x_i} \sqrt{ \rho^\varepsilon_N } |^2 \d X \d s\\
		\leq & \frac{ N \|K\|^2_{L^\infty} }{ 2 \underline{\sigma}^2 } t + 4Nd \| \sigma \|^2_{ W^{2,\infty} } t + \int_{ \mathbb{T}^{dN} } \rho^0_N \log \rho^0_N \d X \,.
	\end{aligned}
\end{equation*}
For the last term, we used the inequality $\int_{ \mathbb{T}^{dN} }  \rho^{0,\varepsilon}_N \log \rho^{0,\varepsilon}_N \leq \int_{ \mathbb{T}^{dN} }  \rho^0_N \log \rho^0_N $, which follows from Jensen's inequality.

\vspace{2mm}

\noindent{\emph{Uniform Bound for $\int_{ \mathbb{T}^{dN} } | \rho^\varepsilon_N \log \rho^\varepsilon_N | \d X$.}} We first separate the integral as 
\begin{equation*}
	\begin{aligned}
		\int_{ \mathbb{T}^{dN} } &| \rho^\varepsilon_N \log \rho^\varepsilon_N | \d X \\ = & \int_{ \rho^\varepsilon_N \leq 1 } \rho^\varepsilon_N |\log \rho^\varepsilon_N | + \int_{ \rho^\varepsilon_N \geq 1} \rho^\varepsilon_N | \log \rho^\varepsilon_N |
		= - \int_{ \rho^\varepsilon_N \leq 1 } \rho^\varepsilon_N \log \rho^\varepsilon_N  + \int_{ \rho^\varepsilon_N \geq 1 } \rho^\varepsilon_N \log \rho^\varepsilon_N \,.
	\end{aligned}
\end{equation*}
Let $F = (2\pi)^{ -\frac{Nd}{2} } e^{ -|X|^2/2 }$ be the density function
of standard normal distribution. Using the inequality $ x \log x - x + 1 \geq 0 $ for all $ x > 0 $, we find that
\begin{equation*}
	\begin{aligned}
		\int_{ \rho^\varepsilon_N \leq 1 } \rho^\varepsilon_N \log \rho^\varepsilon_N \d X = & \int_{ \rho^\varepsilon_N \leq 1} F \Big( \frac{ \rho^\varepsilon_N}{F} \log \frac{ \rho^\varepsilon_N}{F} - \frac{ \rho^\varepsilon_N}{F} + 1 \Big) \d X + \int_{ \rho^\varepsilon_N \leq 1} \rho^\varepsilon_N \log F + \rho^\varepsilon_N - F \d X \\
		\geq & \int_{ \rho^\varepsilon_N \leq 1 } \rho^\varepsilon_N \log F + \rho^\varepsilon_N - F \\
		\geq & -\frac{Nd}{2} \log 2 \pi \int_{\rho^\varepsilon_N \leq 1} \rho^\varepsilon_N - \frac{1}{2} \int_{ \rho^\varepsilon_N \leq 1 } |X|^2 \rho^\varepsilon_N + \int_{ \rho^\varepsilon_N \leq 1} \rho^\varepsilon_N - \int_{ \rho^\varepsilon_N \leq 1} F\\
		\geq & -\frac{Nd}{2} \log 2\pi  -\frac{1}{2} \int_{ \rho^\varepsilon_N \leq 1} |X|^2 \rho^\varepsilon_N - 1\,.
	\end{aligned}
\end{equation*}
 Hence
\begin{equation*}
	\begin{aligned}
		\int_{ \mathbb{T}^{dN} } | \rho^\varepsilon_N \log \rho^\varepsilon_N | \d X 
		\leq & \frac{Nd}{2} \log 2\pi  + \frac{1}{2} \int_{ \rho^\varepsilon_N \leq 1} |X|^2 \rho^\varepsilon_N \d X + 1 + \frac{ N \|K\|^2_{L^\infty} }{ 2 \underline{\sigma}^2 }t \\
		& + 4Nd \| \sigma \|^2_{ W^{2,\infty} } t + \int_{ \mathbb{T}^{dN} } \rho^0_N \log \rho^0_N \d X\,.
	\end{aligned}
\end{equation*}
This implies $\int_{ \mathbb{T}^{dN} } | \rho^\varepsilon_N \log \rho^\varepsilon_N | \d X < C $ with $C$ independent of $\varepsilon$. Thus, we deduce that $\{\rho^\varepsilon_N \}_{\varepsilon}$ is uniformly integrable on $[0,T] \times \mathbb{T}^{dN}$. Note that $\rho^0_N\in L^2$. A similar argument for \eqref{Zeroth-order-estimate} and \eqref{Dissipation-zeroth-order-estimate} gives
\begin{equation}\label{rho-N-L2-estimate}
	\begin{aligned}
		\|\rho^\varepsilon_N\|^2_{L^2} \leq C e^{Ct} \,, \ \sum^N_{i=1} \int^t_0 \| \nabla_{x_i} \rho^\varepsilon_N \|^2_{L^2} \d s \leq Ce^{Ct} \,, \ t\in[0,T]\,.
	\end{aligned}
\end{equation}
By the Banach-Alaoglu theorem and the Dunford-Pettis theorem, combined with a diagonal extraction argument, we can find a subsequence $\{\varepsilon_k\}_{ k \in \mathbb{N} }$ with $\varepsilon_k \to 0$ such that
\begin{equation*}\label{L2-weak-convergence}
	\rho^{\varepsilon_k}_N(t) \rightharpoonup \rho_N(t)\,, \textrm{ in } L^2( 0,T; \mathbb{T}^{dN} )\,,
\end{equation*}
\begin{equation}\label{L2-div-weak-convergence}
	\nabla_{x_i} \rho^{\varepsilon_k}_N(t) \rightharpoonup \nabla_{x_i} \rho_N(t) \,, \textrm{ in } L^2( 0,T; \mathbb{T}^{dN} )\,,
\end{equation}
\begin{equation}\label{L2-div-sqrt-weak-convergence}
	\nabla_{x_i} \sqrt{ \rho^{ \varepsilon_k}_N } \rightharpoonup \nabla_{x_i} \sqrt{\rho_N} \,, \textrm{ in } L^2( 0,T; \mathbb{T}^{dN} )\,,
\end{equation}
\begin{equation}\label{L1-weak-convergence}
	\rho^{\varepsilon_k}_N(t) \rightharpoonup \rho_N(t) \,, \textrm{ in } L^1( 0,T; \mathbb{T}^{dN} )\,,
\end{equation}
\begin{equation*}
	\rho^{\varepsilon_k}_N(t) \rightharpoonup \rho_N(t)\,, \textrm{ in } L^1( \mathbb{T}^{dN} )\,, \  \forall t\in \{ \tau_k\}\,,
\end{equation*}
where $\{\tau_k \}$ is a dense set of $[0,T]$. For any test function $\varphi \in C^2_c( \mathbb{T}^{dN} )$. Note that  $\rho^\varepsilon_N$ satisfies
\begin{equation*}\label{varepsilon-weak-form}
	\begin{aligned}
		\int_{ \mathbb{T}^{dN} } \rho^\varepsilon_N(t) \varphi(t) \d X & - \int_{ \mathbb{T}^{dN} } \rho^\varepsilon_N(0) \varphi(0) \d X = \dfrac{1}{N} \sum\limits^N_{i=1} \sum \limits_{ k=1 }^N \int^t_0 \int_{ \mathbb{T}^{dN} } \rho^\varepsilon_N K^\varepsilon ( x_i-x_k ) \cdot \nabla_{x_i} \varphi\d X \d s \\
		& + \dfrac{1}{N^2} \sum^N_{i=1} \sum^d_{\alpha=1} \int^t_0 \int_{ \mathbb{T}^{dN} } \rho^\varepsilon_N  \Big( \sum\limits_{ k=1 }^N \sigma^\varepsilon_{\alpha\alpha}( x_i-x_k ) \Big)^2 \partial^2_{x^\alpha_i} \varphi \d X \d s\,.
	\end{aligned}
\end{equation*}
Then for $0 \leq t_1 \leq t_2 \leq T $, we have
\begin{equation*}
	\begin{aligned}
			\biggl| \int_{ \mathbb{T}^{dN} } \rho^\varepsilon_N \varphi(t_2) \d X & - \int_{ \mathbb{T}^{dN} } \rho^\varepsilon_N \varphi(t_1) \d X \biggr| \leq \dfrac{1}{N} \sum\limits^N_{i=1} \sum\limits_{ k=1 }^N \biggl| \int^{t_2}_{t_1} \int_{ \mathbb{T}^{dN} } \rho^\varepsilon_N K^\varepsilon ( x_i-x_k ) \cdot \nabla_{x_i} \varphi \d X \d s 	\biggr|	\\
			& + \dfrac{1}{N^2} \sum^N_{i=1} \sum^d_{\alpha=1} \biggl| \int^{t_2}_{t_1} \int_{ \mathbb{T}^{dN} } \rho^\varepsilon_N  \Big( \sum\limits_{ k=1 }^N \sigma^\varepsilon_{\alpha\alpha}( x_i-x_k ) \Big)^2  \partial^2_{x^\alpha_i} \varphi \d X \d s \biggr|\\
			\leq & N \|K\|_{L^\infty} \| \nabla_{x_i} \varphi \|_{L^\infty} | t_2 - t_1 | + Nd \|\sigma\|^2_{L^\infty} \|\varphi\|_{ W^{2,\infty} } |t_2-t_1|\,.
			\end{aligned}
\end{equation*}
This means that $ \{ \int_{ \mathbb{T}^{dN} } \rho^\varepsilon_N \varphi \d X \}_{ \varepsilon > 0 }$ is equicontinuous on $[0,T]$. We also find that $\int_{ \mathbb{T}^{dN} } \rho^\varepsilon_N \varphi \d X$ is uniformly bounded, independent of $\varepsilon$. By the Ascoli–Arzel\`a theorem,  we obtain for the above subsequence $\{\varepsilon_k\}_{ k \in \mathbb{N} }$ with $ \varepsilon_k \to 0$ and for any $\varphi \in C^2_c( \mathbb{T}^{dN} )$ that
\begin{equation*}\label{rho-N-uniform-convergence}
	\int_{ \mathbb{T}^{dN} } \rho^{\varepsilon_k}_N \varphi(t) \d X \to \int_{ \mathbb{T}^{dN} } \rho_N \varphi(t) \d X\,, \ \forall t\in [0,T]\,,
\end{equation*}
where $ \int_{ \mathbb{T}^{dN} } \rho_N \varphi(t) \d X$ is Lipschitz continuous in $t$. We next verify that $\rho_N$ is a weak solution to \eqref{Liouville-Eq} and derive the required entropy dissipation inequality.

\vspace{2mm}

\noindent{\emph{Weak solution.}} Let $\varphi \in C^\infty_c([0,T] \times \mathbb{T}^{dN} )$. 
Note that  $\rho^{\varepsilon_k}_N$ satisfies
\begin{equation*}
	\begin{aligned}
		\int_{ \mathbb{T}^{dN} }   \rho^{ \varepsilon_k}_N(T) &\varphi(T) \d X - \int_{ \mathbb{T}^{dN} }  \rho^{\varepsilon_k}_N(0) \varphi(0) \d X - \int^t_0 \int_{ \mathbb{T}^{dN} }   \rho^{ \varepsilon_k }_N \partial_t \varphi \d X \d s\\
		 = &\dfrac{1}{N} \sum\limits^N_{i=1} \sum \limits_{ k=1 }^N\int^t_0\int_{ \mathbb{T}^{dN} }   \rho^{\varepsilon_k}_N 		K^\varepsilon ( x_i-x_k ) \cdot \nabla_{x_i} \varphi\d X \d s \\
        &- \dfrac{1}{N^2}\sum^N_{i=1} \sum^d_{\alpha=1} \int^t_0\int_{ \mathbb{T}^{dN} }   \rho^{\varepsilon_k}_N  \Big( \sum\limits_{ k=1 }^N \sigma^\varepsilon_{\alpha\alpha}( x_i-x_k ) \Big)^2 \partial^2_{ x^\alpha_i } \varphi \d X \d s\,.
	\end{aligned}
\end{equation*}
The weak convergence in \eqref{L1-weak-convergence} yields
\begin{equation*}
	\begin{aligned}
		\int_{ \mathbb{T}^{dN} }  \rho^{\varepsilon_k}_N & \partial_t \varphi \d X \d s \to \int_{ \mathbb{T}^{dN} }  \rho_N \partial_t \varphi \d X \d s\,, \ \int_{ \mathbb{T}^{dN} }  \varphi(T) \rho^{ \varepsilon_k }_N(T) \d X\to \int_{ \mathbb{T}^{dN} }  \varphi(T) \rho_N (T) \d X\,,\\
		& \int_{ \mathbb{T}^{dN} }  \varphi(0) \rho^{\varepsilon_k}_N(0) \d X \to \int_{ \mathbb{T}^{dN} }  \varphi(0) \rho_N(0) \d X\,, \ \textrm{ as } \varepsilon_k \to 0\,.
	\end{aligned}
\end{equation*}
From the properties of mollifiers and the fact that $K \in L^\infty$ and $ \sigma \in W^{2,\infty}$, we have
$K^{\varepsilon_k} \to K$ in $L^2( \mathbb{T}^{dN} )$ and $\big( \sum_{ k=1 }^N \sigma^{\varepsilon_k}_{\alpha\alpha}( x_i-x_k ) \big)^2 \to \big( \sum_{ k=1 }^N \sigma_{\alpha\alpha}( x_i-x_k ) \big)^2$ in $L^2( \mathbb{T}^{dN} )$ as $ \varepsilon_k \to 0$. Combining these with \eqref{rho-N-L2-estimate} and the weak convergence in \eqref{L1-weak-convergence}, we get
\begin{equation*}
	\begin{aligned}
		&\int^t_0 \int_{ \mathbb{T}^{dN} } \rho^{\varepsilon_k}_N K^{\varepsilon_k} ( x_i-x_k ) \cdot \nabla_{x_i} \varphi \d X \d s - \int^t_0 \int_{ \mathbb{T}^{dN} } \rho_N K ( x_i-x_k ) \cdot \nabla_{x_i} \varphi \d X \d s\\
		= & \int^t_0 \int_{ \mathbb{T}^{dN} } ( K^{\varepsilon_k} - K ) \cdot \nabla_{x_i} \varphi \rho^{\varepsilon_k}_N \d X \d s + \int^t_0 \int_{ \mathbb{T}^{dN} } ( \rho^{\varepsilon_k}_N - \rho_N ) K \cdot \nabla_{x_i} \varphi \d X \d s \\
		\leq &  \|\nabla_{x_i} \varphi\|_{L^\infty} \sup_{t\in[0,T]} \| \rho^{\varepsilon_k}_N \|_{L^2} \|K^{\varepsilon_k} - K \|_{L^2} T + \int^t_0 \int_{ \mathbb{T}^{dN} } ( \rho^{\varepsilon_k}_N - \rho_N) K \cdot \nabla_{x_i} \varphi \d X \d s \to 0
	\end{aligned}
\end{equation*}
and
\begin{equation*}
	\begin{aligned}
		\int^t_0 & \int_{ \mathbb{T}^{dN} } \rho^{\varepsilon_k}_N \Big( \sum\limits_{ k=1 }^N \sigma^{\varepsilon_k}_{\alpha\alpha}( x_i -x_k ) \Big)^2 \partial^2_{x^\alpha_i} \varphi \d X \d s - \int^t_0 \int_{ \mathbb{T}^{dN} } \rho_N \Big( \sum\limits_{ k=1 }^N \sigma_{\alpha\alpha}( x_i-x_k ) \Big)^2  \partial^2_{x^\alpha_i} \varphi \d X \d s\\
		= & \int^t_0 \int_{ \mathbb{T}^{dN} } \rho^{\varepsilon_k}_N \Big[ \Big( \sum\limits_{ k=1 }^N \sigma^{\varepsilon_k}_{\alpha\alpha}( x_i-x_k ) \Big)^2 - \Big( \sum\limits_{ k=1}^N \sigma_{\alpha\alpha}( x_i-x_k ) \Big)^2 \Big] \partial^2_{x^\alpha_i} \varphi \d X \d s \\
		& + \int^t_0 \int_{ \mathbb{T}^{dN} } ( \rho^{\varepsilon_k}_N - \rho_N ) \Big( \sum\limits_{ k=1 }^N \sigma_{\alpha\alpha}( x_i-x_k ) \Big)^2 \partial^2_{x^\alpha_i} \varphi \d X \d s \\
		\leq & \sup_{ t \in [0,T] }\| \rho^{\varepsilon_k}_N \|_{L^2} \| \partial^2_{x^\alpha_i} \varphi \|_{L^\infty} T \biggl\| \Big( \sum\limits_{ k=1 }^N \sigma^{\varepsilon_k}_{\alpha\alpha}( x_i-x_k ) \Big)^2 -  \Big( \sum\limits_{ k=1 }^N \sigma_{\alpha\alpha}( x_i-x_k ) \Big)^2 \biggr\|_{L^2} \\
		& + \int^t_0 \int_{ \mathbb{T}^{dN} } ( \rho^{\varepsilon_k}_N - \rho_N )\Big( \sum\limits_{ k=1 }^N \sigma_{\alpha\alpha}( x_i-x_k ) \Big)^2 \partial^2_{x^\alpha_i} \varphi \d X \d s \to 0\,.
	\end{aligned}
\end{equation*}
Hence $\rho_N$ is a weak solution of \eqref{Liouville-Eq}.

\noindent{\emph{Entropy dissipation inequality.}} From \eqref{Entropy-Time-Evolution} and $\int_{ \mathbb{T}^{dN} }  \rho^{0,\varepsilon}_N \log \rho^{0,\varepsilon}_N \leq \int_{ \mathbb{T}^{dN} }  \rho^0_N \log \rho^0_N $, we have
\begin{equation}\label{Entropy-equality-varepsilon}
	\begin{aligned}
		\int_{ \mathbb{T}^{dN} } \rho^{\varepsilon_k}_N & \log \rho^{\varepsilon_k}_N \d X + \frac{1}{N^2} \sum\limits^N_{i=1} \sum^d_{\alpha=1} \int^t_0 \int_{ \mathbb{T}^{dN} } \Big( \sum\limits_{ k=1 }^N \sigma^{\varepsilon_k}_{\alpha\alpha}( x_i-x_k ) \Big)^2 \frac{ ( \partial_{x^\alpha_i} \rho^{\varepsilon_k}_N )^2 }{ \rho^{\varepsilon_k}_N } \d X \d s\\
		\leq &	\int_{\mathbb{T}^{dN} }\rho^0_N \log \rho^0_N \d X + \dfrac{1}{N} \sum\limits^N_{i=1} \sum^N_{k=1} \int^t_0 \int_{ \mathbb{T}^{dN} } K^{\varepsilon_k} ( x_i-x_k ) \cdot  \nabla \rho^{\varepsilon_k}_N \d X \d s \\
		& + \frac{1}{N^2} \sum\limits^N_{i=1} \sum^d_{\alpha=1} \int^t_0 \int_{ \mathbb{T}^{dN} } \partial^2_{x^\alpha_i}  \Big( \sum\limits_{ k=1 }^N \sigma^{\varepsilon_k}_{\alpha\alpha}( x_i-x_k ) \Big)^2 \rho^{\varepsilon_k}_N  \d X \d s\,.
	\end{aligned}
\end{equation}
For the second term on the rhs of \eqref{Entropy-equality-varepsilon}, by
$K^{\varepsilon_k} \to K$ in $L^2(\mathbb{T}^{dN})$ and the weak convergence in \eqref{L2-div-weak-convergence}, we have
\begin{equation}\label{Est-rhs-second-term}
	\begin{aligned}
		\int^t_0 & \int_{ \mathbb{T}^{dN} } K^{\varepsilon_k} ( x_i-x_k ) \cdot \nabla \rho^{\varepsilon_k}_N \d X \d s - \int^t_0 \int_{ \mathbb{T}^{dN} } K ( x_i-x_k ) \cdot \nabla \rho_N \d X \d s \\
		= & \int^t_0 \int_{ \mathbb{T}^{dN} }  \big( K^{\varepsilon_k}( x_i-x_k ) - K ( x_i-x_k ) ) \cdot \nabla \rho^{\varepsilon_k}_N \d X \d s\\
		& + \int^t_0 \int_{ \mathbb{T}^{dN} } K ( x_i-x_k ) \cdot ( \nabla_{x_i} \rho^{\varepsilon_k}_N - \nabla \rho_N ) \d X \d s \\
		\leq & \| K^{\varepsilon_k} - K \|_{L^2} \int^t_0 \| \nabla \rho^{\varepsilon_k}_N \|_{L^2} \d s - \int^t_0 \int_{ \mathbb{T}^{dN} }  K( x_i-x_k ) \cdot ( \nabla \rho^{\varepsilon_k} - \nabla \rho_N ) \d X \d s \to 0\,.
	\end{aligned}
\end{equation}
By the properties of mollifiers and $ \sigma \in W^{2,\infty}$, we obtain that $ \partial^2_{x^\alpha_i} \big( \sum_{ k=1 }^N \sigma^{\varepsilon_k}_{\alpha\alpha}( x_i-x_k ) \big)^2$ converges strongly to $\partial^2_{x^\alpha_i} \big( \sum_{ k=1 }^N \sigma_{\alpha\alpha}( x_i-x_k ) \big)^2 $ in $L^2( \mathbb{T}^{dN} )$ as $\varepsilon_k \to 0$. Combining this with the weak convergence in \eqref{L1-weak-convergence}, we obtain for the third item on the rhs of \eqref{Entropy-equality-varepsilon} that
\begin{equation}\label{Est-rhs-third-term}
	\begin{aligned}
		\int^t_0 & \int_{ \mathbb{T}^{dN} }  \partial^2_{x^\alpha_i}  \Big( \sum\limits_{ k=1 }^N \sigma^{\varepsilon_k}_{\alpha\alpha}( x_i-x_k ) \Big)^2 \rho^\varepsilon_N \d X \d s - \int^t_0 \int_{ \mathbb{T}^{dN} }  \partial^2_{x^\alpha_i} \Big( \sum\limits_{ k=1 }^N \sigma_{\alpha\alpha}( x_i-x_k ) \Big)^2  \rho_N \d X \d s\\
		= & \int^t_0 \int_{ \mathbb{T}^{dN} }  \Big(  \partial^2_{x^\alpha_i} \Big( \sum\limits_{k=1}^N \sigma^{\varepsilon_k}_{\alpha\alpha}( x_i-x_k ) \Big)^2 - \partial^2_{x^\alpha_i} \Big( \sum\limits_{ k=1 }^N \sigma_{\alpha\alpha}( x_i-x_k ) \Big)^2  \Big) \rho^{\varepsilon_k}_N \d X \d s\\
		& + \int^t_0 \int_{ \mathbb{T}^{dN} }  \partial^2_{x^\alpha_i}  \Big( \sum\limits_{ k=1 }^N \sigma_{\alpha\alpha}( x_i-x_k ) \Big)^2 \big( \rho^{\varepsilon_k}_N - \rho_N) \d X \d s \\
		\leq & \sup_{t\in[0,T]}\| \rho^{\varepsilon_k}_N \|_{L^2} T \biggl\| \partial^2_{x^\alpha_i} \Big( \sum\limits_{ k=1 }^N \sigma^{\varepsilon_k}_{\alpha\alpha}( x_i-x_k ) \Big)^2 - \partial^2_{x^\alpha_i} \Big( \sum\limits_{ k=1 }^N \sigma_{\alpha\alpha}( x_i-x_k ) \Big)^2 \biggr\|_{L^2}\\
		& + \int^t_0 \int_{ \mathbb{T}^{dN} }  \partial^2_{x^\alpha_i}  \Big( \sum\limits_{ k=1 }^N \sigma_{\alpha\alpha}( x_i-x_k) \Big)^2 \big( \rho^{\varepsilon_k}_N -\rho_N) \d X \d s \to 0\,.
	\end{aligned}
\end{equation}
For the first term on the lhs of \eqref{Entropy-equality-varepsilon}, we have
\begin{equation}\label{Est-lhs-first-term}
	\int_{ \mathbb{T}^{dN} }  \rho_N\log \rho_N \d X \leq \liminf_{\varepsilon_k \to 0} \int_{ \mathbb{T}^{dN} }  \rho^{\varepsilon_k}_N \log \rho^{\varepsilon_k}_N \d X
\end{equation}
by the lower semicontinuity of entropy. For the second term on the lhs, we rewrite it as
\begin{equation*}
	\begin{aligned}
		&\frac{1}{N^2} \sum\limits^N_{i=1} \sum^d_{\alpha=1} \int^t_0 \int_{ \mathbb{T}^{dN} }  \Big( \sum\limits_{k=1}^N \sigma^\varepsilon_{\alpha\alpha}( x_i-x_k ) \Big)^2 \frac{ (\partial_{x^\alpha_i} \rho^{\varepsilon_k}_N )^2 }{ \rho^{\varepsilon_k}_N } \d X \d s \\
		= & \frac{1}{N^2} \sum\limits^N_{i=1} \int^t_0 \int_{ \mathbb{T}^{dN} }  \Big( \sum\limits_{k=1}^N \sigma^\varepsilon( x_i-x_k ) \Big)^2 :  \frac{\nabla_{x_i} \rho^{\varepsilon_k}_N  \otimes \nabla_{x_i} \rho^{\varepsilon_k}_N }{ \rho^{\varepsilon_k}_N } \d X \d s\\
		= & \frac{4}{N^2} \sum\limits^N_{i=1} \int^t_0 \int_{ \mathbb{T}^{dN} }  \tilde{\sigma}^{\varepsilon_k}_i :  \nabla_{x_i} \sqrt{ \rho^{\varepsilon_k}_N } \otimes \nabla_{x_i} \sqrt{ \rho^{\varepsilon_k}_N } \d X \d s:= \frac{4}{N^2} \sum\limits^N_{i=1} \int^t_0 D_i ( \rho^{\varepsilon_k}_N ) \d s\,,
	\end{aligned}
\end{equation*}
where $ \tilde{\sigma}^{\varepsilon_k}_i = \big( \sum_{k=1}^N \sigma^{\varepsilon_k}( x_i-x_k ) \big)^2$.
Note that $\sigma$ is positive definite. For each $i$, $ D_i (\rho^{\varepsilon_k}_N) $ has the representation
\begin{equation*}
	D_i ( \rho^{\varepsilon_k}_N ) = \sup_{\phi \in C^\infty_c ( \mathbb{T}^{dN};\mathbb{R}^d ) } \int_{ \mathbb{T}^{dN} }  2\Big( \tilde{\sigma}^{\varepsilon_k}_i \nabla_{x_i} \sqrt{\rho^{\varepsilon_k}_N} \Big) \cdot \phi- \phi \cdot \tilde{\sigma}^{\varepsilon_k}_i \phi  \d X \,.
\end{equation*}
This holds by approximating $\nabla_{x_i} \sqrt{ \rho^{\varepsilon_k}_N }$ in $C^\infty_c$ by mollification and truncation. Similarly, 
\begin{equation*}
	\begin{aligned}
		&\frac{1}{N^2} \sum\limits^N_{i=1} \sum^d_{\alpha=1} \int^t_0 \int_{ \mathbb{T}^{dN} }  \Big( \sum\limits_{k=1}^N \sigma_{\alpha\alpha}( x_i-x_k ) \Big)^2 \frac{ ( \partial_{x^\alpha_i} \rho_N )^2 }{ \rho_N } \d X \d s \\
		= & \frac{4}{N^2} \sum\limits^N_{i=1} \int^t_0 \int_{ \mathbb{T}^{dN} }  \tilde{\sigma}_i :  \nabla_{x_i} \sqrt{ \rho_N } \otimes \nabla_{x_i} \sqrt{ \rho_N } \d X \d s:= \frac{4}{N^2} \sum\limits^N_{i=1} \int^t_0 D_i (\rho_N) \d s\,,
	\end{aligned}
\end{equation*}
where $ \tilde{\sigma}_i = \big( \sum_{k=1}^N \sigma(x_i-x_k) \big)^2$.  For each $i$, $D_i (\rho_N) $ has the representation
\begin{equation*}
	D_i(\rho_N) = \sup_{ \phi \in C^\infty_c ( \mathbb{T}^{dN};\mathbb{R}^d ) } \int_{ \mathbb{T}^{dN} }  2 \big( \tilde{\sigma}_i \nabla_{x_i} \sqrt{\rho_N} \big) \cdot \phi - \phi \cdot \tilde{\sigma}_i \phi \d X\,.
\end{equation*}
By the definition of the supremum, for any $\delta>0$, there exists a $\phi_\delta$ such that
\begin{equation*}
	\int_{ \mathbb{T}^{dN} }  2 \big( \tilde{\sigma}_i \nabla_{x_i} \sqrt{\rho_N} \big) \cdot \phi_{\delta} -\phi_{\delta} \cdot \tilde{\sigma}_i \phi_\delta  \d X \geq D_i(\rho_N) - \delta
\end{equation*}
and 
\begin{equation*}
	D_i (\rho^{\varepsilon_k}_N) \geq \int_{ \mathbb{T}^{dN} }  2 \Big( \tilde{\sigma}^{\varepsilon_k}_i \nabla_{x_i} \sqrt{\rho^{\varepsilon_k}_N} \Big) \cdot \phi_\delta -\phi_\delta \cdot \tilde{\sigma}^{\varepsilon_k}_i \phi_\delta \d X\,.
\end{equation*}
Then we have
\begin{equation*}
\liminf_{\varepsilon_k \to 0}	D_i (\rho^{\varepsilon_k}_N) \geq \liminf_{\varepsilon_k \to 0} \int_{ \mathbb{T}^{dN} }  2 \Big( \tilde{\sigma}^{\varepsilon_k}_i \nabla_{x_i} \sqrt{ \rho^{\varepsilon_k}_N } \Big) \cdot \phi_\delta - \phi_\delta \cdot \tilde{\sigma}^{\varepsilon_k}_i \phi_\delta  \d X\,.
\end{equation*}
By the properties of mollifiers and the fact $ \sigma\in W^{2,\infty}$, we have $\tilde{\sigma}^{\varepsilon_k}_i \to \tilde{\sigma}_i$ in $L^2(\mathbb{T}^{dN})$ as $\varepsilon_k \to 0$.  Combining this with the weak convergence in \eqref{L2-div-sqrt-weak-convergence}, we obtain
\begin{equation*}
	\begin{aligned}
		\int_{ \mathbb{T}^{dN} } & 2 \Big( \tilde{\sigma}^{\varepsilon_k}_i \nabla_{x_i} \sqrt{\rho^{\varepsilon_k}_N} \Big) \cdot \phi_\delta - \phi_\delta \cdot \tilde{\sigma}^{\varepsilon_k}_i \phi_\delta  \d X - \int_{ \mathbb{T}^{dN} }  2 \big( \tilde{\sigma}_i \nabla_{x_i} \sqrt{\rho_N} \big) \cdot \phi_{\delta} - \phi_{\delta}\cdot \tilde{\sigma}_i \phi_\delta  \d X\\
		= & 2 \int_{ \mathbb{T}^{dN} }  \Big( ( \tilde{\sigma}^{\varepsilon_k}_i - \tilde{\sigma}_i ) \nabla_{x_i} \sqrt{\rho^{\varepsilon_k}_N} \Big) \cdot \phi_\delta \d X+ 2\int_{ \mathbb{T}^{dN} }  \Big( \tilde{\sigma}_i \Big( \nabla_{x_i} \sqrt{\rho^{\varepsilon_k}_N} - \nabla_{x_i} \sqrt{ \rho^{\varepsilon_k}_N } \Big) \Big) \cdot \phi_\delta \d X  \\
        &- \int_{ \mathbb{T}^{dN} } \phi_\delta\cdot (\tilde{\sigma}^{\varepsilon_k}_i - \tilde{\sigma}_i) \phi_\delta  \d X \\
		\leq & 2 \| \tilde{\sigma}^{\varepsilon_k}_i - \tilde{\sigma}_i \|_{L^2} \Big\| \nabla_{x_i} \sqrt{\rho^{\varepsilon_k}_N} \Big\|_{L^2} \|\phi_\delta\|_{L^\infty} + 2 \int_{ \mathbb{T}^{dN} }  \Big( \tilde{\sigma}_i \Big( \nabla_{x_i}\sqrt{\rho^{\varepsilon_k}_N} - \nabla_{x_i} \sqrt{\rho^{\varepsilon_k}_N} \Big) \Big) \cdot \phi_\delta \d X\\
		& - \int_{ \mathbb{T}^{dN} } \phi_\delta\cdot (\tilde{\sigma}^{\varepsilon_k}_i - \tilde{\sigma}_i) \phi_\delta  \d X \ \to 0, \ \textrm{ as } \varepsilon_k \to 0\,.
	\end{aligned}
\end{equation*}
This implies that
\begin{equation*}
\liminf_{\varepsilon_k \to 0} \int_{ \mathbb{T}^{dN} }  2 \Big( \tilde{\sigma}^{\varepsilon_k}_i \nabla_{x_i} \sqrt{\rho^{\varepsilon_k}_N} \Big) \cdot \phi_\delta - \phi_\delta\cdot \tilde{\sigma}^{\varepsilon_k}_i \phi_\delta  \d X = \int_{ \mathbb{T}^{dN} }  2 \big( \tilde{\sigma}_i \nabla_{x_i} \sqrt{\rho_N} \big) \cdot \phi_{\delta} - \phi_{\delta}\cdot \tilde{\sigma}_i \phi_\delta  \d X\,.
\end{equation*}
Hence
\begin{equation*}
	\begin{aligned}		\liminf_{\varepsilon_k \to 0} &  D_i ( \rho^{\varepsilon_k}_N ) \geq 		\liminf_{\varepsilon_k \to 0} \int_{ \mathbb{T}^{dN} }  2 \Big( \tilde{\sigma}^{\varepsilon_k}_i \nabla_{x_i} \sqrt{\rho^{\varepsilon_k}_N} \Big) \cdot \phi_\delta - \phi_\delta\cdot \tilde{\sigma}^{\varepsilon_k}_i \phi_\delta  \d X\\
		= & \int_{ \mathbb{T}^{dN} }  2 \big( \tilde{\sigma}_i \nabla_{x_i} \sqrt{\rho_N} \big)\cdot \phi_{\delta} - \phi_{\delta}\cdot \tilde{\sigma}_i \phi_\delta  \d X \geq D_i (\rho_N) - \delta \,.
	\end{aligned}
\end{equation*}
Leting $\delta \to 0$ yields 
\begin{equation*}
	\begin{aligned}
		\liminf_{\varepsilon_k \to 0} D_i ( \rho^{\varepsilon_k}_N ) \geq  D_i (\rho_N)\,.
	\end{aligned}
\end{equation*}
Hence 
\begin{equation}\label{Est-lhs-second-term}
	\begin{aligned}		\liminf_{\varepsilon_k \to 0} & \frac{1}{N^2} \sum\limits^N_{i=1} \sum^d_{\alpha=1} \int^t_0 \int_{ \mathbb{T}^{dN} }   \Big( \sum\limits_{k=1}^N \sigma^\varepsilon_{\alpha\alpha}( x_i-x_k ) \Big)^2 \frac{ ( \partial_{x^\alpha_i} \rho^{\varepsilon_k}_N )^2 }{ \rho^{\varepsilon_k}_N } \d X \d s\\
		\geq & \frac{1}{N^2} \sum\limits^N_{i=1} \sum^d_{\alpha=1} \int^t_0 \int_{ \mathbb{T}^{dN} } \Big( \sum\limits_{k=1}^N \sigma_{\alpha\alpha}(x_i-x_k) \Big)^2 \frac{(\partial_{x^\alpha_i} \rho_N )^2 }{\rho_N} \d X \d s\,.
	\end{aligned}
\end{equation}
Taking the limit inferior as $\varepsilon_k\to 0$ on both sides of \eqref{Entropy-equality-varepsilon} and using the results from \eqref{Est-rhs-second-term}-\eqref{Est-lhs-second-term}, we obtain the desired inequality \eqref{Entropy- dissipation-inequality}.

	\vspace*{3mm}
	
	\noindent{\bf Conflict of interest statement:} We declare that there are no conflicts of interest to this work.

	
	\section*{Acknowledgments}
	The authors sincerely thank Prof. Zhenfu Wang for his valuable advice and guidance, Prof. Daniel Lacker for generously providing helpful literature, and Mr. Xuanrui Feng for the insightful discussions.
	
\bigskip
	

\begin{thebibliography}{99}

\bibitem{AH2010} S. M. Ahn and S.-Y. Ha, Stochastic flocking dynamics of the Cucker-Smale model with multiplicative white noises. {\em J. Math. Phys.}, {\bf 51} (2010), no. 10, 103301, 17 pp.

\bibitem{BFFT2012} J. Baladron, D. Fasoli, O. Faugeras, and J. Touboul, Mean-field description and propagation of chaos in networks of Hodgkin-Huxley and FitzHugh-Nagumo neurons. {\em J. Math. Neurosci.}, {\bf 2} (2012), Art. 10, 50 pp.

\bibitem{BCL2026} L. Bol, L. Chen, and Y. Li, Two-dimensional signal-dependent parabolic-elliptic Keller-Segel system and its mean-field derivation. {\em J. Differential Equations}, {\bf 450} (2026), Paper No. 113712, 49 pp.

\bibitem{BCC2011} F. Bolley, J. A. Ca\~nizo, and J. A. Carrillo, Stochastic mean-field limit: non-Lipschitz forces and swarming. {\em Math. Models Methods Appl. Sci.}, {\bf 21} (2011), no. 11, 2179-2210.

\bibitem{BJW2023} D. Bresch, P. E. Jabin, and Z. Wang, 
Mean field limit and quantitative estimates with singular attractive kernels. {\em Duke Math. J.}, {\bf 172} (2023), no. 13, 2591–2641.

\bibitem{Bru2010} R. A. Brualdi, Introductory Combinatorics. 5th ed., Pearson Prentice Hall, Upper Saddle River, NJ, 2010.
		
\bibitem{CCS2019} J. A. Carrillo, Y.-P. Choi, and S. Salem, Propagation of chaos for the Vlasov-Poisson-Fokker-Planck equation with a polynomial cut-off. {\em Commun. Contemp. Math.}, {\bf 21} (2019), no. 4, 1850039, 28 pp.

\bibitem{CG2025} J. A. Carrillo and S. Guo, From Fisher information decay for the Kac model to the Landau-Coulomb hierarchy. {\em arXiv: 2502.18606} (2025).

\bibitem{CGJ2024} J. A. Carrillo, S. Guo, and P. E. Jabin, Mean-field derivation of Landau-like equations. {\em Appl. Math. Lett.}, {\bf 158} (2024), Paper No. 109195, 5 pp.

\bibitem{CDLL2019} P. Cardaliaguet, F. Delarue, J.-M. Lasry, and P.-L. Lions, The master equation and the convergence problem in mean field games.
{\em Ann. of Math. Stud.}, {\bf 201}, 
Princeton University Press, Princeton, NJ, 2019.

\bibitem{CST2022} J.-F. Chassagneux, L. Szpruch, and A. Tse, Weak quantitative propagation of chaos via differential calculus on the space of measures. {\em Ann. Appl. Probab.}, {\bf 32} (2022), no. 3, 1929–1969.

\bibitem{Chi1994} T.-S. Chiang, McKean-Vlasov equations with discontinuous coefficients. {\em Soochow J. Math.}, {\bf 20} (1994), no. 4, 507–526.
		
\bibitem{CS2019} Y.-P. Choi and S. Salem, Cucker-Smale flocking particles with multiplicative noises: stochastic mean-field limit and phase transition. {\em Kinet. Relat. Models}, {\bf 12} (2019), no. 3, 573–592.

\bibitem{CF2016} M. Coghi and F. Flandoli, Propagation of chaos for interacting particles subject to environmental noise. {\em Ann. Appl. Probab.}, {\bf 26} (2016), no. 3, 1407–1442.

\bibitem{CO2025} J. Correa and C. Olivera, From particle systems to the stochastic compressible Navier-Stokes equations of a barotropic fluid. {\em J. Nonlinear Sci.}, {\bf 35} (2025), no. 3, Paper No. 50, 47 pp.

\bibitem{CT2024} C. Crucianelli and L. Tangpi, Interacting particle systems on sparse W-random graphs. {\em arXiv: 2410.11240} (2024).

\bibitem{Deg1986} P. Degond, Global existence of smooth solutions for the Vlasov-Fokker-Planck equation in 1 and 2 space dimensions. {\em Ann. Sci. École Norm. Sup. (4)}, {\bf 19} (1986), no.4, 519–542.

\bibitem{DL2024} K. Du and L. Li, A collision-oriented interacting particle system for Landau-type equations and the molecular chaos. {\em arXiv: 2408.16252} (2024).

\bibitem{DEGZ2020} A. Durmus, A. Eberle, A. Guillin, and R. Zimmer, An elementary approach to uniform in time propagation of chaos. {\em 
Proc. Amer. Math. Soc.}, {\bf 148} (2020), no. 12, 5387–5398.

\bibitem{Eva2010} L. C. Evans, Partial differential equations. 2nd ed., {\em Grad. Stud. Math.}, vol. 19, American Mathematical Society, Providence, RI, 2010. 

\bibitem{FW2025} X. Feng and Z. Wang, Kac's program for the Landau equation. {\em arXiv: 2506.14309} (2025). 

\bibitem{Gar1988} J. G\"artner, On the McKean-Vlasov limit for interacting diffusions. {\em Math. Nachr.}, {\bf 137} (1988), 197–248.

\bibitem{GGP2025} J. Grass, A. Guillin, and C. Poquet, Sharp propagation of chaos for McKean-Vlasov equation with non constant diffusion coefficient. {\em Electron. Commun. Probab.}, {\bf30} (2025), Paper No. 55, 12 pp.

\bibitem{GPG2025} J. Grass, C. Poquet, and A. Guillin, Propagation of chaos in Fisher information. {\em arXiv: 2511.20078} (2025).

\bibitem{GLM2025} A. Guillin, P. Le Bris, and P. Monmarch\'e, Uniform in time propagation of chaos for the 2D vortex model and other singular stochastic systems.
{\em J. Eur. Math. Soc.}, {\bf 27} (2025), no. 6, 2359–2386.
		
\bibitem{HJNXZ2017} S.-Y. Ha, J. Jeong, S. E. Noh, Q. Xiao, and X. Zhang, Emergent dynamics of Cucker-Smale flocking particles in a random environment. {\em J. Differential Equations}, {\bf 262} (2017), no. 3, 2554-2591.

\bibitem{HKMZ2020} S.-Y. Ha, D. Ko, C. Min, and X. Zhang, Emergent collective behaviors of stochastic Kuramoto oscillators. {\em Discrete Contin. Dyn. Syst. Ser. B}, {\bf 25} (2020), no. 3, 1059-1081.

\bibitem{Hua2023} X. Huang, Long time entropy-cost type propagation of chaos. {\em arXiv: 2308.15181 } (2023). 

\bibitem{Hua2025} X. Huang, Coupling by change of measure for conditional McKean-Vlasov SDEs and applications. {\em Stochastic Process. Appl.}, {\bf 179} (2025), Paper No. 104508, 17 pp.

\bibitem{Hua2026} X. Huang, Quantitative propagation of chaos in  $L^{\eta} \, (\eta \in [0,1])$-Wasserstein distance for mean field interacting particle system. {\em
Potential Anal.}, {\bf 64} (2026), no. 3, Paper No. 44.

\bibitem{JW2016} P. E. Jabin and Z. Wang, 
Mean field limit and propagation of chaos for Vlasov systems with bounded forces. {\em J. Funct. Anal.}, {\bf 271} (2016), no. 12, 3588–3627.

\bibitem{JW2018} P. E. Jabin and Z. Wang, Quantitative estimates of propagation of chaos for stochastic systems with $W^{-1,\infty}$ kernels. {\em Invent. Math.}, {\bf 214} (2018), no. 1, 523–591.

	

\bibitem{JM1998} B. Jourdain and S. M\'el\'eard, Propagation of chaos and fluctuations for a moderate model with smooth initial data. {\em Ann. Inst. H. Poincar\'e Probab. Statist.}, {\bf 34} (1998), no. 6, 727–766.

\bibitem{Lac2018} D. Lacker, On a strong form of propagation of chaos for McKean-Vlasov equations. {\em Electron. Commun. Probab.}, {\bf 23} (2018), Paper No. 45, 11 pp.

\bibitem{LPLSFD2007} N. E. Leonard, D. A. Paley, F. Lekien, R. Sepulchre, D. M. Fratantoni, and R. E. Davis, Collective motion, sensor networks, and ocean sampling, {\em Proc. IEEE}, \textbf{95} (2007), no. 1, 48-74.

\bibitem{MB2002} A. J. Majda and A. L. Bertozzi, Vorticity and incompressible flow. Cambridge Texts in Applied Mathematics, vol. 27, Cambridge University Press, Cambridge, 2002.

\bibitem{Mal2001} F. Malrieu, Logarithmic Sobolev inequalities for some nonlinear PDE's.
 {\em Stochastic Process. Appl.}, {\bf 95} (2001), no. 1, 109–132.

\bibitem{Mal2003} F. Malrieu, Convergence to equilibrium for granular media equations and their Euler schemes. {\em Ann. Appl. Probab.}, {\bf 13} (2003), no. 2, 540–560.
		
\bibitem{Mel1996} S. M\'el\'eard, Asymptotic behaviour of some interacting particle systems; McKean-Vlasov and Boltzmann models. In: {\em Probabilistic models for nonlinear partial differential equations (Montecatini Terme, 1995)}. {\em Lecture Notes in Math.}, Vol. 1627, Springer-Verlag, Berlin, 1996, pp. 42–95.


\bibitem{NRS2022} Q.-H. Nguyen, M. Rosenzweig, and S. Serfaty, Mean-field limits of Riesz-type singular flows. {\em Ars Inven. Anal.}, (2022), Paper No. 4, 45 pp.

\bibitem{Nik2025} P. Nikolaev, Quantitative relative entropy estimates for interacting particle systems with common noise. {\em SIAM J. Math. Anal.}, {\bf 57} (2025), no. 3, 3071–3109.

\bibitem{NW2026} N. Ning and J. Wu, Well-posedness and propagation of chaos for McKean-Vlasov stochastic variational inequalities. {\em J. Theoret. Probab.}, {\bf 39} (2026), no. 1, Paper No. 5, 48 pp.

\bibitem{Pav2014} G. A. Pavliotis, Stochastic processes and applications.
Diffusion processes, the Fokker-Planck and Langevin equations. {\em Texts Appl. Math.}, vol. 60, Springer, New York, 2014.

\bibitem{Ros2020} M. Rosenzweig, The mean-field limit of stochastic point vortex systems with multiplicative noise. {\em arXiv: 2011.12180} (2020).

\bibitem{RS2023} M. Rosenzweig and S. Serfaty,
Global-in-time mean-field convergence for singular Riesz-type diffusive flows. {\em Ann. Appl. Probab.}, {\bf 33} (2023), no. 2, 754–798.

\bibitem{RS2024} M. Rosenzweig and S. Serfaty, Relative entropy and modulated free energy without confinement via self-similar transformation. {\em arXiv: 2402.13977} (2024).

\bibitem{Spo1991} H. Spohn, Large scale dynamics of interacting particles. {\em Texts and Monographs in Physics.} Springer, Berlin, 1991.

\bibitem{Szn1991} A.-S. Sznitman, Topics in propagation of chaos. In: {\em \'Ecole d'\'Et\'e de Probabilit\'es de Saint-Flour XIX--1989}, Springer-Verlag, Berlin, {\bf 1464} (1991), 165-251.
		
\bibitem{Vil2009} C. Villani, Optimal Transport, Old and New. {\em Grundlehren der mathematischen Wissenschaften}, vol. 338, Springer-Verlag, Berlin, 2009.

\bibitem{WLH2026} J. Wang, K. Li, and H. Huang, Rigorous derivation of the mean-field limit for the signal-dependent Keller-Segel system. {\em arXiv: 2602.01138} (2026).

\end{thebibliography}

\end{document}